\theoremstyle{plain}
\newtheorem{main}{Theorem}
\newtheorem{theorem}{Theorem}[section]
\newtheorem{lemma}[theorem]{Lemma}
\newtheorem{proposition}[theorem]{Proposition}
\newtheorem{corollary}[theorem]{Corollary}
\theoremstyle{remark}
\newtheorem{remark}[theorem]{Remark}
\newcommand{\C}{\operatorname{C}}
\newcommand{\Gibb}{\operatorname{Gibb}}
\newcommand{\MM}{\operatorname{MM}}
\newcommand{\Basin}{\operatorname{Basin}}
           \def\ea{\end{array}}
          \def\ec{\end{center}}
     \def\ed{\end{description}}
        \def\ee{\end{equation}}
       \def\eea{\end{eqnarray}}
     \def\eeaa{\end{eqnarray*}}
 \def\et{\end{thebibliography}}
\def\Orb{{\rm Orb}}
\def\Diff{{\rm Diff}}
\def\Cl{{\rm Cl}}
\def\MM{\operatorname{MM}}
\def\supp{\operatorname{supp}}
\def\PH{\operatorname{PH}(M)}
\def\SPH1{\operatorname{SPH}_1(M)}
\def\cU{{\mathcal U}}
\def\cH{{\mathcal H}}
\def\cF{{\mathcal F}}
\def\cP{{\mathcal P}}
\def\cS{{\mathcal S}}
\def\cW{{\mathcal W}}
\def\TT{{\mathbb T}}
\def\tB{\tilde{B}}
\def\txi{\tilde{\xi}}
\def\tcH{\tilde{\mathcal{H}}}
\def\hnu{\hat{\mu}}
\title[Diffeomorphisms of circle fiber bundles]{Maximal entropy measures of diffeomorphisms of circle fiber bundles}
\author{Ra\'ul Ures, Marcelo Viana and Jiagang Yang}
\date{\today}
\thanks{M.V. and J.Y. were partially supported by CNPq, FAPERJ, and PRONEX. R. U. was partially supported by Southern University of Science and Technology.
We acknowledge support from the Fondation Louis D--Institut de France (project coordinated by M. Viana).}
\address{Department of Mathematics, Southern University of Science and Technology, 1088 Xueyuan Rd., Xili, Nanshan District, Shenzhen, Guangdong, China 518055} \email{ures@sustc.edu.cn}
\address{IMPA, Est. D. Castorina 110, 22460-320 Rio de Janeiro, Brazil}
\email{viana\@@impa.br}
\address{Departamento de Geometria, Instituto de Matem\'atica e Estat\'\i stica, Universidade Federal Fluminense, Niter\'oi, Brazil}
\email{yangjg\@@impa.br}
\begin{document}

\begin{abstract}
We characterize the maximal entropy measures of partially hyperbolic $C^2$ diffeomorphisms whose center foliations
form circle bundles, by means of suitable finite sets of saddle points, that we call skeletons.

In the special case of 3-dimensional nilmanifolds other than the torus, this entails the following dichotomy:
either the diffeomorphism is a rotation extension of an Anosov diffeomorphism -- in which case there is a unique
maximal measure, with full support and zero center Lyapunov exponents -- or there exist exactly two ergodic maximal
measures, both hyperbolic and whose center Lyapunov exponents have opposite signs.
Moreover, the set of maximal measures varies continuously with the diffeomorphism.
\end{abstract}

\maketitle

\section{Introduction}
The metric entropy describes the complexity of a dynamical system relative to an invariant probability measure.
For diffeomorphisms of a compact Riemannian manifold, the variational principle (see \cite{LW}) states that the
supremum of the metric entropy over all invariant probability measures coincides with the topological entropy
of the system.

We call \emph{maximal (entropy) measure} any invariant probability measure whose metric entropy coincides with the topological entropy.
Such measures reflect the complexity level of the whole system, and constitute a classical topic in ergodic theory:
Are there maximal measures? How many ergodic maximal measures does the system have, and where are they supported?
How do they vary with the dynamical system?

It is a classical fact, see for instance \cite{B}, that every transitive hyperbolic set admits a unique maximal measure.
Recently, new approaches were developed for the study of maximal measures of non-uniform hyperbolic maps and
partially hyperbolic diffeomorphisms, including $C^\infty$ interval maps \cite{Bu},
$C^{1+\alpha}$ surface diffeomorphisms \cite{BCS,S} and Derived from Anosov diffeomorphisms \cite{BFSV, U, VY,YY}.
For some recent progress in the more general setting of equilibrium states, see \cite{CPZ} and references therein.

In the present paper, we study $C^{2}$ partially hyperbolic diffeomorphisms with 1-dimensional center and
whose center foliation forms a circle bundle.
It was shown in \cite{HHTU} that if the diffeomorphism is accessible and not rotation type then it admits finitely
many ergodic maximal measures.
Those conditions are satisfied on an open and dense subset of partially hyperbolic diffeomorphisms.

Here we use a combinatorial object -- a finite set of hyperbolic saddles, that we call \emph{skeleton} --
to describe all the maximal measures. More precisely, by observing those saddles, we can count the number of
ergodic maximal measures, locate their supports, and explain how they vary with respect to the diffeomorphism.
Our main detailed results will be stated in Section~\ref{s.general_results}.
Right now, let us mention some applications to diffeomorphisms on 3-dimensional \emph{nilmanifolds}, that is,
manifolds $M$ that are circle bundles over the torus $\TT^2$ (the case $M=\TT^3$ is excluded).

\begin{main}\label{main.Nil}

Let $f$ be a $C^2$ partially hyperbolic diffeomorphism on a 3-dimensional nilmanifold $M \neq \TT^3$.
Then

\begin{itemize}

\item[(a)] either $f$ has a unique maximal measure, in which case $f$ is conjugate to a rotation extension of an Anosov diffeomorphism,
and the maximal measure is supported on the whole $M$ and has vanishing center exponent;

\item[(b)] or $f$ has exactly two ergodic maximal measures $\mu^+,\mu^-$, with positive and negative center Lyapunov exponents, respectively.
\end{itemize}

\end{main}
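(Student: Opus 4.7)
The strategy is to separate the two alternatives via the rotation/non-rotation dichotomy, leveraging the general skeleton framework (to be established in Section~\ref{s.general_results}) together with the finiteness result of \cite{HHTU}.

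In the rotation case, I would verify (a) directly: if $f$ is conjugate to a rotation extension of an Anosov map $A$ on $\TT^2$, the unique Bowen-Margulis measure of $A$ lifts to an $f$-invariant probability via Haar measure on fibers. The lift has full support (Haar is preserved by rotations), vanishing center Lyapunov exponent (the fiber action is isometric), and metric entropy equal to $h_{\text{top}}(A)=h_{\text{top}}(f)$ by standard arguments for isometric extensions. Uniqueness follows because any maximal measure must project to the unique maximal measure of $A$, and the rotation structure forces the fiber disintegration to be Haar.

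For (b), suppose $f$ is not of rotation type. Then \cite{HHTU} gives finitely many ergodic maximal measures, and the skeleton characterization of Section~\ref{s.general_results} associates to each such measure a finite set of hyperbolic saddles sharing a common sign of center Lyapunov exponent, equal to the sign of the measure's own center exponent. Ergodic maximal measures with zero center exponent would force rotation-type behavior on $M$, which is excluded under our hypothesis on non-torus nilmanifolds; hence every ergodic maximal measure has strictly positive or strictly negative center exponent. The content of (b) therefore reduces to showing exactly one ergodic maximal measure of each sign.

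This counting step is the main obstacle. To handle it I would combine the topological classification of partially hyperbolic diffeomorphisms on non-torus $3$-nilmanifolds -- such $f$ is leaf conjugate to an algebraic skew product over an Anosov base -- with the nontrivial Euler number of the circle bundle $M\to\TT^2$: the nontrivial holonomy of the center foliation along generators of $\pi_1(\TT^2)$ precludes the coexistence of two disjoint $u$-saturated sets each supporting a positive-exponent maximal measure, and symmetrically for the stable case. This forces the $u$- and $s$-skeletons each to correspond to a single ergodic maximal measure. Existence of maximal measures of both signs then follows by applying the skeleton framework to $f$ and $f^{-1}$ simultaneously, completing the dichotomy.
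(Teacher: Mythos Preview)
Your overall strategy matches the paper's: split into rotation/non-rotation via the dichotomy of \cite{HHTU} (Proposition~\ref{p.finiteness}), and in the non-rotation case invoke Theorem~\ref{main.skeleton} to reduce the count of ergodic maximal measures of each sign to the count of minimal $u$-saturated (resp.\ $s$-saturated) compact invariant sets. The paper executes the counting step more cleanly than your Euler-class sketch: Proposition~\ref{p.nilmanifold}, quoting \cite{HP}, already states that on a non-torus $3$-nilmanifold every partially hyperbolic $f$ has a \emph{unique} compact invariant $u$-saturated minimal set (and likewise for $s$), so $k=l=1$ immediately from Theorem~\ref{main.skeleton}. Your holonomy/Euler-number heuristic is indeed the geometric mechanism underlying the accessibility and uniqueness results in \cite{HP}, but as stated it is not a proof; you should cite that result rather than attempt to re-derive it. One correction: by Theorem~\ref{main.skeleton} the $\cF^u$-saturated supports carry the measures with \emph{negative} center exponent and the $\cF^s$-saturated ones carry the positive-exponent measures, the reverse of what you wrote.
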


Diffeomorphisms as in (a) are called \emph{rotation type}.
These are clearly rigid systems: for instance, they cannot admit hyperbolic periodic orbits; moreover, the complement is a $C^1$ open and
$C^r$ ($r\geq 1$) dense subset of the set $\PH$ of all $C^2$ partially hyperbolic diffeomorphisms on $M$.

We also get the following global result, asserting that the ergodic maximal measures vary continuously
with the diffeomorphism. To state this precisely, let $\cP(M)$ be the space of probability measures on $M$
endowed with the weak$^*$ topology. Let $\PH$ be endowed with the $C^1$ topology.

\begin{main}\label{main.Nilcontinuation}
Let $M$ be a 3-dimensional nilmanifold different from $T^3$. Then there are continuous functions,
$$
\Gamma^+, \Gamma^- \colon \PH \to \cP(M),
$$
such that for each $f\in\PH$, $\Gamma^+(f)$ is the ergodic maximal measure of $f$ with non-negative
center exponent, $\Gamma^-(f)$ is the ergodic maximal measure of $f$ with non-positive center exponent,
and $\Gamma^+(f)=\Gamma^-(f)$ if and only if either of them has vanishing center exponent.
\end{main}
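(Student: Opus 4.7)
The plan is to define $\Gamma^{\pm}$ pointwise using Theorem A and then prove continuity via the skeleton machinery whose existence we assume from the body of the paper.

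First I would set the definitions. If $f$ is rotation type, put $\Gamma^+(f)=\Gamma^-(f)$ equal to the unique maximal measure from Theorem A(a). Otherwise, let $\Gamma^+(f)=\mu^+$ and $\Gamma^-(f)=\mu^-$ be the two ergodic maximal measures from Theorem A(b), indexed by the sign of their center exponents. The equivalence $\Gamma^+(f)=\Gamma^-(f)$ iff one of them has vanishing center exponent is immediate from the dichotomy, since in case (b) the exponents are strictly of opposite sign.

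For continuity, fix $f\in\PH$ and a sequence $f_n\to f$ in $C^1$. By compactness of $\cP(M)$ in the weak$^*$ topology, after extracting subsequences we may assume $\Gamma^\pm(f_n)\to \nu^\pm$ for some probabilities $\nu^\pm$. The first step is to show $\nu^\pm$ is a maximal measure of $f$. This should follow from upper semi-continuity of the metric entropy map $(g,\mu)\mapsto h_\mu(g)$ in our partially hyperbolic setting, together with continuity of topological entropy for partially hyperbolic circle bundle maps. By Theorem A, $\nu^\pm$ must then be a convex combination of the (one or two) ergodic maximal measures of $f$.

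To identify which combination, I would use the skeleton. Choose a skeleton $\cS$ of hyperbolic saddles for $f$, split according to the sign of the center exponent. Because the saddles are hyperbolic, they admit $C^1$-continuations $\cS(f_n)$, with the same center-exponent signs for $n$ large. The characterization from the body of the paper implies that $\Gamma^+(f_n)$ is associated to the positive-exponent saddles in $\cS(f_n)$, whose orbits and invariant manifolds converge to those of the positive saddles in $\cS$. This forces $\supp(\nu^+)$ to meet the homoclinic class of a positive-exponent saddle of $f$, and by the count in Theorem A this pins down $\nu^+=\Gamma^+(f)$; symmetrically for $\nu^-$.

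The main obstacle, I expect, is handling the transition where $f_n$ are non-rotation-type but $f$ is rotation type. Then $\Gamma^\pm(f_n)$ are hyperbolic with center exponents bounded away from zero of opposite signs, yet they must both converge to the unique, fully supported, zero-exponent maximal measure $\mu$ of $f$. One must rule out any loss of mass or localization of supports for $\Gamma^\pm(f_n)$. Here I would invoke the skeleton-based lower bound on the number of ergodic maximal measures: if the support of a weak$^*$ limit failed to equal $M$, one could build an auxiliary skeleton for $f$ forcing more than one maximal measure, contradicting rotation type; alternatively one can use the full-support statement in Theorem A(a) combined with the uniqueness of the maximal measure to conclude $\nu^\pm=\mu$. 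Executing this step cleanly, so that the argument is symmetric in $\pm$ and does not depend on which hyperbolic sector dominates along $f_n$, is the delicate technical point.
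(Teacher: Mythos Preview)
Your definitions of $\Gamma^\pm$ and the reduction to showing that any weak$^*$ accumulation point $\nu^\pm$ is a maximal measure are fine, and the upper semi-continuity of entropy indeed gives this. However, there is a genuine gap in the identification step, and you have the two cases inverted in difficulty.

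The rotation-type case is the easy one, not the hard one. If $f$ is rotation type it has a \emph{unique} maximal measure, so once you know $\nu^\pm\in\MM(f)$ you are done; there is no issue of ``loss of mass'' or ``localization of supports'' to worry about. Your second suggestion there (uniqueness) is the whole argument.

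The actual gap is in the non-rotation case. Knowing that $\supp(\nu^+)$ meets the homoclinic class of the positive-exponent saddle does \emph{not} pin down $\nu^+=\mu^+$: the limit $\nu^+$ is a priori a convex combination $a\mu^+ + (1-a)\mu^-$, and if $1-a>0$ then $\supp(\nu^+)$ contains \emph{both} supports, so a support argument cannot separate them. The supports of $\mu^+$ and $\mu^-$ need not be disjoint (Theorem~\ref{main.skeleton} only gives disjointness among measures with the \emph{same} sign of center exponent), and weak$^*$ convergence gives no useful control on supports. Nor does the center exponent help: $\lambda^c(\nu^+)\ge 0$ only yields a bound on $a$, not $a=1$.

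The paper avoids this by a different mechanism: it proves (Theorem~\ref{main.robust}) that $\Gamma^-(g)$ is always the unique $\nu$-Gibbs $u$-state of $g$, and then invokes upper semi-continuity of $\Gibb^u_\nu$ in the diffeomorphism (Proposition~\ref{p.Gibbsustates}(c)). This immediately forces $\nu^-\in\Gibb^u_\nu(f)=\{\mu^-\}$, and symmetrically for $\nu^+$. The point is that the characterization ``$\nu$-Gibbs $u$-state'' passes to weak$^*$ limits, whereas ``support associated to a given saddle'' does not. Your skeleton outline could be salvaged by importing the heteroclinic-intersection argument at the end of Section~6, but in the nilmanifold situation (where $k=l=1$) that argument degenerates precisely into the $\nu$-Gibbs $u$-state statement.
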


Theorems~\ref{main.Nil} and~\ref{main.Nilcontinuation} will be deduced from much more detailed results that
we state in the next section.

\section{Statement of results}\label{s.general_results}

\subsection{Objects}

A diffeomorphism $f:M \to M$ is \emph{partially hyperbolic}, if the tangent bundle splits into three invariant subbundles,
$TM=E^s\oplus E^c\oplus E^u$, such that $E^s$ is uniformly contracting, $E^u$ is uniformly expanding, and $E^c$ has
intermediate behavior. By this we mean that there exists some Riemannian metric on $M$ such that
$$
\|df\mid_{E^s}\|<1, \ \|df^{-1}\mid_{E^u}\|<1 , \text{ and } \|df_x(e^s)\|<\|df_x(e^c)\|<\|df_x(e^u)\|
$$
for all $e^\sigma\in E^\sigma_x$, $||e^\sigma||=1$, $\sigma\in\{s,c,u\}$, and all $x\in M$.

A partially hyperbolic diffeomorphism $f$ is \emph{accessible} if any two points $x,y$ can be joined
by a curve formed by finitely many arcs which are tangent to either the strong stable subbundle $E^s$ or
the strong unstable bundle $E^u$.
A partially hyperbolic diffeomorphism $f$ is \emph{dynamically coherent} if for $i=cs, cu$,
there is an invariant foliation $\cF^{i}$ tangent to the bundle $E^i$, where $E^{cs}=E^c\oplus E^s$ and $E^{cu}=E^u\oplus E^c$.

Recall that accessibility is a $C^1$ open and $\C^r$ ($r\geq 1$) dense property for the partially
hyperbolic diffeomorphisms with 1-dimensional center direction (\cite{BHHTU,D}).
Moreover, when $f$ is dynamically coherent $\cF^c=\cF^{cu}\bigcap \cF^{cs}$ is an invariant center foliation of $f$,
that is, tangent to the center subbundle $E^c$.

Denote by $\SPH1$ the set of $C^2$ partially hyperbolic, accessible, dynamically coherent
diffeomorphisms with 1-dimensional center direction for which the center foliation $\mathcal{F}^c$
forms a circle bundle.
For $f\in\SPH1$, let $f_c$ denote the map induced by $f$ on the quotient space $M_c=M/\mathcal{F}^c$.
Then $f_c$ is a topological Anosov homeomorphism (a globally hyperbolic homeomorphism,
in the sense of \cite[Section~1.3]{V} or \cite[Section 2.2]{VY}).
We use $\cF^i$, $i\in\{s,c,u\}$ to denote the invariant foliations of $f$, and $\cW^i$, $i\in\{s,u\}$
to denote the stable and unstables foliations of $f_c$, respectively. We further assume that
\begin{equation}\label{eq.transtivebase}
\text{ $M_c$ is a torus.}
\end{equation}

\begin{remark}\label{r.baseconjugate}
By a result of Hiraide~\cite{Hi}, $f_c$ is conjugate to a linear Anosov torus diffeomorphism, for any $f\in \SPH1$.
In particular, $f_c$ is a transitive homeomorphism, and admits a unique probability measure of maximal entropy $\nu$.
\end{remark}

It follows from the classification results in \cite{H,HP} that if $M$ is a 3-dimensional
nilmanifold $M$ other than $\TT^3$ then $\SPH1$ contains every $C^2$ partially hyperbolic diffeomorphism of $M$:

\begin{proposition}\cite[Propositions~1.9 and~6.4]{HP}\label{p.nilmanifold}
If $f:M\to M$ is a $C^2$ partially hyperbolic diffeomorphism on a 3-dimensional nilmanifold $M$ other than $\TT^3$
then it admits a unique center foliation, tangent to the center bundle $E^c$, and which forms a circle bundle.
Moreover, $f$ is accessible, and it has a unique compact, invariant, $u$-saturated (respectively, $s$-saturated) minimal subset.
\end{proposition}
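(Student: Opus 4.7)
The plan is to derive the proposition from the classification program of Hammerlindl and Potrie for partially hyperbolic diffeomorphisms on 3-dimensional nilmanifolds, combining the Burago--Ivanov branching foliation theorem with the coarse geometry of the Heisenberg group. The argument splits into three moves: identifying an algebraic model $\bar A$ to which $f$ should be leaf conjugate, establishing the leaf conjugacy itself, and then transferring the required dynamical properties.

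For the first move, observe that every 3-dimensional nilmanifold $M\neq\TT^3$ is a quotient $H_3(\RR)/\Gamma$ of the Heisenberg group by a cocompact lattice, with the center of $H_3(\RR)$ descending to a circle fibration $\pi:M\to\TT^2$. The action of $f$ on $H_1(M;\RR)\cong\RR^2$ is necessarily a hyperbolic integer matrix $A$, because $E^c$ is one-dimensional, dominated, and tangent (up to bounded error) to the central direction of $H_3$. Lifting to $H_3(\RR)$ and abelianizing then yields a canonical algebraic skew product $\bar A:M\to M$ which is a rotation extension of $A$ along the fibers of $\pi$, and whose center foliation is the circle fibration itself.

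For the second move, which is the crux of the proof, I would lift $f$ to $H_3(\RR)$ and apply the Burago--Ivanov theorem to produce $f$-invariant branching foliations tangent to $E^{cs}$ and $E^{cu}$. The polynomial growth of $H_3$ and its quasi-isometric rigidity force the lifted leaves of these branching foliations to remain within bounded Hausdorff distance of the affine subspaces coming from $\bar A$; a global product structure argument then rules out actual branching, giving unique integrability of $E^s$, $E^c$ and $E^u$. The compactness of the center leaves of $\bar A$, which are the fibers of $\pi$, is thereby inherited by $\cF^c$, which is thus a circle bundle, and one obtains a leaf conjugacy between $f$ and $\bar A$. This step is the main obstacle: on $\TT^3$ the analogous unique integrability of $E^c$ can fail (Derived-from-Anosov examples), so the argument must exploit features specific to Heisenberg geometry to rule out branching of the center.

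For the third move, accessibility follows from the non-involutivity of $E^s\oplus E^u$ in the algebraic model: iterated Lie brackets generate the central direction, so accessibility classes are open, and thus equal to $M$ by connectedness. This property is transferred to $f$ via the leaf conjugacy together with the $C^1$-openness of accessibility for partially hyperbolic diffeomorphisms with one-dimensional center. For the uniqueness of the minimal $u$-saturated $f$-invariant subset, its image under $\pi$ is a closed $\cW^u$-saturated $f_c$-invariant subset of $\TT^2$, and since $f_c$ is a transitive linear Anosov the only such set is $\TT^2$ itself; the nontrivial holonomy of the circle fibration then forces $u$-leaves to wind densely along each fiber, so the minimal set must be all of $M$. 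The $s$-saturated case is symmetric.
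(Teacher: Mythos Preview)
The paper does not prove this proposition at all; it is quoted verbatim from Hammerlindl--Potrie \cite[Propositions~1.9 and~6.4]{HP}, so there is no in-paper argument to compare against. Your first two moves---producing an algebraic model on the Heisenberg nilmanifold and establishing a leaf conjugacy via Burago--Ivanov branching foliations plus the coarse geometry of $H_3(\RR)$---are indeed the backbone of the Hammerlindl--Potrie argument, and your description of why the $\TT^3$ case is different (Derived-from-Anosov examples with branching center) is accurate.

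Your third move, however, contains a genuine gap. For accessibility you write that the property ``is transferred to $f$ via the leaf conjugacy together with the $C^1$-openness of accessibility.'' This does not work: a leaf conjugacy is merely a homeomorphism sending center leaves to center leaves; it neither places $f$ in a $C^1$-neighborhood of the algebraic model nor sends $su$-paths to $su$-paths, so Didier's $C^1$-openness theorem is simply not applicable. The actual argument in \cite{HP} is topological: if $f$ were not accessible, the non-open accessibility classes would assemble into an $f$-invariant codimension-one lamination saturated by both $\cF^s$ and $\cF^u$; projecting to the base forces the leaves to be $2$-tori transverse to the circle fibers, i.e.\ sections of the bundle, which is impossible because a non-toral nilmanifold has nontrivial Euler class. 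You hint at the Euler-class obstruction only in the last paragraph and for a different purpose; it needs to be invoked here.

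Your argument for the unique $u$-minimal set is also too loose. You assert that ``the nontrivial holonomy of the circle fibration then forces $u$-leaves to wind densely along each fiber, so the minimal set must be all of $M$,'' but you have not explained what holonomy you mean or why it yields density, and in fact the proposition only claims \emph{uniqueness} of the minimal $u$-saturated set, not that it equals $M$ (for non-rotation-type $f$ it is generally a proper subset, namely $\supp\mu^-$). The correct mechanism again uses the bundle topology: any compact $u$-saturated invariant set projects onto all of $\TT^2$, and comparing its fiberwise trace via unstable holonomy together with the absence of sections forces any two such minimal sets to intersect, hence to coincide.
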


From now on, we restrict ourselves to the diffeomorphisms in $\SPH1$.

\subsection{Main results}

Let $f:M\to M$ be a partially hyperbolic diffeomorphism. A finite set $S=\{p_1,\cdots,p_k\}$ is a \emph{skeleton} of $f$ if
\begin{itemize}
\item[(a)] each $p_i$ ($1\leq i \leq k$) is a hyperbolic saddle of $f$ with stable index $\dim(E^{cs})$;
\item[(b)] $\cF^u(x)$ intersects $\cup_{1\leq i \leq k} W^s(\Orb(p_i))$ transversely at some point, for every $x\in M$;
\item[(c)] $W^u(p_i)\cap W^s(\Orb(p_j))=\emptyset$ for $1\leq i \neq j \leq k$.
\end{itemize}
A finite set satisfying the conditions (a) and (b) is called a \emph{pre-skeleton}.
A pre-skeleton is \emph{minimal} if no strict subset is a pre-skeleton.
Skeletons not always exist. The following facts about skeletons and pre-skeletons can be found in \cite{DVY}:

\begin{proposition}\label{p.skeleton}~
\begin{itemize}
\item[(a)] If both $S_1$ and $S_2$  are skeletons of $f$ then $\# S_1 = \# S_2$.
\item[(b)] If $\{p_1,\cdots, p_k\}$ is a pre-skeleton of $f$ then $\{p_1(g),\cdots, p_k(g)\}$ is a pre-skeleton of any $g$
sufficiently close to $f$, where $p_i(g)$ denotes the hyperbolic continuation of $p_i$.
\item[(c)] If $\{p_1,\cdots,p_k\}$ is a skeleton of $f$ then, for any $g$ sufficiently close to $f$,
$\{p_1(g),\cdots, p_k(g)\}$ is a skeleton of $g$ if and only if no heteroclinic intersection was created relating
$p_i(g)$ and $p_j(g)$, for any $1\leq i \neq j \leq k$.
\item[(d)] Every pre-skeleton contains a minimal pre-skeleton, and a pre-skeleton is minimal if and only if it is a skeleton.
\end{itemize}
\end{proposition}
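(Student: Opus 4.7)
My plan is to prove the four parts in the order (b), (c), (d), (a), leaning on two standard tools: the $\lambda$-lemma at hyperbolic saddles, and $C^1$-stability of transverse intersections combined with hyperbolic continuation of local invariant manifolds. Throughout I will use the partial-hyperbolic complementarity $TM=E^u\oplus E^{cs}$, which forces any intersection of an $E^u$-tangent leaf with an $E^{cs}$-tangent leaf to be automatically transverse and of complementary dimensions.

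For part (b), the continuations $p_i(g)$ have the same stable index as $p_i$, so condition (a) is automatic. For condition (b) of a pre-skeleton, I would use compactness of $M$ to reduce the verification to a finite open cover: for each $x\in M$ some neighborhood and some index $i$ have $\cF^u(y)\pitchfork W^s(\Orb(p_i))$ for every $y$ in the neighborhood; $C^1$-continuity of local invariant manifolds and openness of transversality transfer this to any $g$ sufficiently close to $f$. Part (c) then follows immediately: by (b) the continuations form a pre-skeleton, and being a skeleton is exactly the absence of heteroclinic intersections, which is condition (c).

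For part (d), suppose $\{p_1,\ldots,p_k\}$ is a pre-skeleton with $W^u(p_i)\cap W^s(\Orb(p_j))\neq\emptyset$ for some $i\neq j$; I claim removing $p_i$ leaves a pre-skeleton. Picking $w$ in the intersection and iterating backward, $f^{-n}(w)\to p_i$ while $f^{-n}(w)\in W^s(\Orb(p_j))$ for all $n$. Applying the $\lambda$-lemma at $p_i$ to a small $E^u$-disk inside $W^u(p_i)$ through $w$ (which is transverse to $W^s(\Orb(p_j))$ by complementarity) shows that leaves of $W^s(\Orb(p_j))$ accumulate $C^1$ on $W^s_{\loc}(p_i)$, and hence by $f$-invariance on all of $W^s(\Orb(p_i))$. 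Consequently, every transverse intersection of $\cF^u(x)$ with $W^s(\Orb(p_i))$ is approximated by a transverse intersection with $W^s(\Orb(p_j))$, so condition (b) survives after dropping $p_i$; iterating shows every pre-skeleton contains a minimal one. Conversely, if $\{p_1,\ldots,p_k\}$ is a skeleton, then removing $p_i$ would force $\cF^u(p_i)=W^u(p_i)$ to meet some $W^s(\Orb(p_j))$ with $j\neq i$, violating condition (c); thus minimal pre-skeletons coincide with skeletons.

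Part (a) will be the main obstacle, since nothing in the definition directly pins down the cardinality. Given two skeletons $S_1=\{p_1,\ldots,p_k\}$ and $S_2=\{q_1,\ldots,q_l\}$, I plan to build a bijection $\phi\colon S_1\to S_2$ by sending $p_i$ to the $q_j$ with $W^u(p_i)\cap W^s(\Orb(q_j))\neq\emptyset$ (existence comes from the pre-skeleton property of $S_2$ applied to the unstable leaf $\cF^u(p_i)=W^u(p_i)$). Uniqueness is the delicate step: if $j_1\neq j_2$ both worked, the $\lambda$-lemma at $p_i$ would give $W^u(\Orb(q_{j_\alpha}))\subset\overline{W^u(p_i)}$ for $\alpha=1,2$; applying the pre-skeleton property of $S_1$ to each $q_{j_\alpha}$ and using condition (c) of $S_1$ to force the feedback index to equal $i$, one obtains the reverse inclusions and therefore $\overline{W^u(p_i)}=\overline{W^u(\Orb(q_{j_1}))}=\overline{W^u(\Orb(q_{j_2}))}$. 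In particular $q_{j_2}\in\overline{W^u(\Orb(q_{j_1}))}$, and the local complementarity $E^u\oplus E^{cs}=TM$ at $q_{j_2}$ then yields $W^u(q_{j_1})\cap W^s(q_{j_2})\neq\emptyset$, contradicting condition (c) of $S_2$. A symmetric construction gives $\psi\colon S_2\to S_1$, and the same $\lambda$-lemma bookkeeping shows $\psi\circ\phi=\id$, whence $|S_1|=|S_2|$.
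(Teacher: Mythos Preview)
The paper does not give its own proof of this proposition: it simply records the statement and refers to \cite{DVY}. So there is no in-paper argument to compare against; one can only judge your sketch on its own merits, and it is essentially the standard route that \cite{DVY} follows.

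Your outline is correct in substance, but two of the $\lambda$-lemma applications are mis-described. In part (d), to conclude that $W^s(\Orb(p_j))$ accumulates $C^1$ on $W^s_{\loc}(p_i)$, the disk you feed into the $\lambda$-lemma at $p_i$ must be a $\dim E^{cs}$-dimensional disk \emph{inside $W^s(\Orb(p_j))$} through $w$, which is transverse to $W^u(p_i)$; an $E^u$-disk inside $W^u(p_i)$ is contained in, not transverse to, the relevant invariant manifold and gives nothing. In part (a), the inclusion $W^u(\Orb(q_{j_\alpha}))\subset\overline{W^u(\Orb(p_i))}$ comes from the $\lambda$-lemma applied at $q_{j_\alpha}$ (forward iteration of an $E^u$-disk in $W^u(\Orb(p_i))$ through the heteroclinic point), not at $p_i$. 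Also keep track of orbits versus points: the closures you equate are $\overline{W^u(\Orb(\cdot))}$, and condition~(c) for $W^u(p_i)$ follows from the orbit version after applying a suitable iterate of $f$. Finally, from $\psi\circ\phi=\id_{S_1}$ alone you only get $\#S_1\le\#S_2$; you need the symmetric identity $\phi\circ\psi=\id_{S_2}$ (which your argument indeed yields) to conclude equality. With these cosmetic fixes, your proof goes through.
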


Part (b) ensures that the subset of diffeomorphisms with a pre-skeleton is open.

We will deduce Theorems~\ref{main.Nil} and \ref{main.Nilcontinuation} from the two theorems that follow,
by means of Proposition~\ref{p.nilmanifold}. Recall that, given a foliation $\cF$, we say that a compact subset of $M$
is \emph{$\cF$-saturated} if it consists of entire leaves.
Following \cite{BV}, we call an $\cF$-saturated set an \emph{$\cF$-minimal component} if each leaf contained in it is dense.
We use $u$-saturated and $s$-saturated as synonyms to $\cF^u$-saturated and $\cF^s$-saturated, respectively.

\begin{main}\label{main.skeleton}
Let $f\in \SPH1$. If $f$ is rotation type then it has a unique maximal measure, and this measure has vanishing center exponent.
Otherwise, $f$ and $f^{-1}$ have skeletons $S(f)=\{p_1,\cdots,p_k\}$ and $S(f^{-1})=\{q_1,\cdots,q_l\}$, respectively,
and $f$ has exactly $k+l$ ergodic maximal measures:
\begin{itemize}
\item[(a)] $k$ ergodic maximal measures $\mu^-_i$ with negative center exponents; each support $\supp\mu^-_i$
coincides with $\Cl(\cF^u(\Orb(p_i)))$, which has finitely many connected components,
each of which is an $\cF^u$-minimal component
\item[(b)] $l$ ergodic maximal measures $\mu^+_i$ with positive center exponent; each support $\supp\mu^-_i$
coincides with $\Cl(\cF^s(\Orb(q_i)))$, which has finitely many connected components,
each of which is an $\cF^s$-minimal component.
\end{itemize}
\end{main}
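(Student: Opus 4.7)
The plan is to lift information from the base dynamics $f_c$ on $M_c$. Since $\cF^c$ is a circle bundle, the projection $\pi\colon M\to M_c$ is a semi-conjugacy with compact one-dimensional fibers; the Bowen--Ledrappier--Walters formula then gives $h_{\text{top}}(f)=h_{\text{top}}(f_c)$ and $h_\mu(f)=h_{\pi_*\mu}(f_c)$ for every $f$-invariant $\mu$, because the fiber dynamics carries no entropy. Combined with Remark~\ref{r.baseconjugate}, this forces every ergodic maximal measure of $f$ to project onto the unique measure of maximal entropy $\nu$ of $f_c$.

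Next, I classify ergodic maximal measures $\mu$ by the sign of the center exponent $\lambda^c(\mu)$. If $\lambda^c(\mu)=0$, a Katok-type argument combined with the results of \cite{HHTU} and accessibility shows that $f$ must be rotation type; conversely, when $f$ is rotation type, the entropy formula above together with invariance under the fiberwise rotation action forces the conditional measures on center circles to be Haar, so the maximal measure is unique with vanishing center exponent. If $\lambda^c(\mu)<0$, then $\mu$ is hyperbolic of stable index $\dim E^{cs}$, and Pesin theory identifies the Pesin unstable manifold through $\mu$-almost every point with the strong unstable leaf $\cF^u$; in particular, $\supp\mu$ is $\cF^u$-saturated. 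The case $\lambda^c(\mu)>0$ is symmetric via $f^{-1}$, with supports $\cF^s$-saturated.

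The core of the argument is to establish a bijection between ergodic maximal measures with negative center exponent and elements of a skeleton $S(f)=\{p_1,\dots,p_k\}$. Applying Katok's horseshoe theorem to each such $\mu$ produces a hyperbolic saddle $p$ of index $\dim E^{cs}$ in $\supp\mu$; the $\cF^u$-saturation of $\supp\mu$ yields $\Cl(\cF^u(\Orb(p)))\subset\supp\mu$, and I would prove both equality and the uniqueness statement that at most one ergodic maximal measure is supported on $\Cl(\cF^u(\Orb(p)))$. The saddles produced this way then satisfy condition~(a) by construction, condition~(b) because an unstable leaf missing all $W^s(\Orb(p_i))$ would yield, via a Bowen-ball count in the fiber direction, an extra ergodic maximal measure disjoint from the $\mu^-_i$ already accounted for, and condition~(c) because distinct ergodic maximal measures have disjoint supports, so no heteroclinic intersection between the corresponding saddles is possible. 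Proposition~\ref{p.skeleton}(d) upgrades this minimal pre-skeleton to a skeleton, and part~(a) of the same proposition pins down the cardinality as $k$. A mirror argument applied to $f^{-1}$ delivers the $l$ measures with positive center exponent.

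The main obstacle I expect is the uniqueness-per-saddle step: showing that at most one ergodic maximal measure is supported on $\Cl(\cF^u(\Orb(p_i)))$. This is where $u$-saturation of the support, the projection onto $\nu$, the negativity of the center exponent, and accessibility must be combined carefully, most likely through a Ledrappier--Young style analysis of the disintegration along center circles, exploiting that a negative center exponent forces the conditional measures on centers to have dimension zero. Once this is settled, the remaining structural claims---that each connected component of $\Cl(\cF^u(\Orb(p_i)))$ is an $\cF^u$-minimal component, and that there are only finitely many such components---follow from ergodicity of $\mu^-_i$ together with the fact that $f$ cyclically permutes these components along the finite orbit of $p_i$.
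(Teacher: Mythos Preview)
Your outline bypasses the paper's main engine, namely the identification $\MM^-(f)=\Gibb^u_\nu(f)$ between maximal measures with negative center exponent and \emph{$\nu$-Gibbs $u$-states}: invariant measures whose disintegration along the unstable partition $\xi$ coincides with the lifts $\nu^u_x$ of the conditionals of $\nu$. This identification rests on the Avila--Viana invariance principle (an ergodic maximal measure with $\lambda^c\le 0$ has $u$-invariant center disintegration, hence is a $\nu$-Gibbs $u$-state) together with the rigidity statement that a simultaneously $s$- and $u$-invariant maximal measure forces rotation type. Once this is in place, the paper shows $f$ has \emph{$\nu$-mostly contracting center} and runs a direct analogue of the Bonatti--Viana mostly-contracting theory: finiteness, pairwise disjoint supports, and the $\cF^u$-minimal-component structure all follow from basin arguments using Pesin stable manifolds and the fact that $\cF^{cs}$-holonomy preserves the family $\nu^u_x$.

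Without this machinery, several of your steps have real gaps. First, you invoke ``distinct ergodic maximal measures have disjoint supports'' to get condition~(c), but distinct ergodic measures can share a support in general; the paper proves disjointness only after knowing that every such measure has the \emph{same} conditionals $\nu^u_x$ along $\cF^u$, so that two of them would share basin points and coincide. Second, your argument for condition~(b) does not control the center exponent of the measure you construct from an unstable leaf: why is the limit not a combination of the $\mu^+_j$, which sit on $\cF^s$-saturated sets and need not meet $W^s(\Orb(p_i))$? The paper instead pushes forward $\nu^u_x$ (not Lebesgue, not a Bowen count), so that every accumulation point is automatically a $\nu$-Gibbs $u$-state and hence a combination of the $\mu^-_i$; this is what forces the transverse intersection. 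Third, your Ledrappier--Young route to uniqueness-per-saddle (zero-dimensional center conditionals from $\lambda^c<0$) does not by itself separate two ergodic measures with the same $u$-saturated support: both could have atomic center conditionals. What pins the measure down is $u$-invariance of those conditionals along $\cF^u$, which is again the invariance principle you are not using.
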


We also analyze how the maximal measures vary with the diffeomorphism. For the second part of the following
statement keep in mind that part (c) of Proposition~\ref{p.skeleton} characterizes when the
hyperbolic continuation $\{p_1(g),\cdots,p_k(g)\}$ of a skeleton of $f$ is a skeleton of a nearby map $g$.

\begin{main}\label{main.robust}
Let $f\in \SPH1$.
If $f$ has no hyperbolic periodic points, there exists a $C^1$-neighborhood $\cU$ of $f$ among $C^2$ diffeomorphisms
such that any $g\in \cU$ has at most two ergodic maximal measures.
Moreover, there are continuous functions $\Gamma^+, \Gamma^-:\cU \to \cP(M)$ such that, for any $g\in\cU$,
$\Gamma^+(g)$ is an ergodic maximal measure of $g$ with center exponent greater than or equal to $0$,
and
$\Gamma^-(g)$ is an ergodic maximal measure of $g$ with center exponent smaller than or equal to $0$.

Now suppose that $f$ has some hyperbolic periodic point. Let $S(f)=\{p_1,\cdots,p_k\}$ and
$S(f^{-1})=\{q_1,\cdots,q_l\}$ be skeletons of $f$ and $f^{-1}$, respectively.
Then there is a $C^1$-neighborhood $\cU$ of $f$ among $C^2$ diffeomorphisms such that the number of
ergodic maximal measures of any diffeomorphism $g\in\cU$ with negative (respectively, positive) center exponent
is smaller than or equal to $k$ (respectively, $l$).

Moreover, if the hyperbolic continuation $\{p_1(g),\cdots,p_k(g)\}$ is a skeleton of $g$ (respectively,
$\{q_1(g), \cdots, q_l(g)\}$ is a skeleton of $g^{-1}$), then $f$ and $g$ have the same number of ergodic maximal measures
with negative (respectively, positive) center exponent, and corresponding ergodic maximal measures of
the two diffeomorphisms are close to each other in the weak$^*$ topology.
\end{main}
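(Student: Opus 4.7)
The plan is to combine Theorem~\ref{main.skeleton} and Proposition~\ref{p.skeleton} with three analytic inputs: upper semi-continuity of the metric entropy map $\mu\mapsto h(g,\mu)$ for $C^2$ partially hyperbolic diffeomorphisms with one-dimensional center (standard at this regularity); local constancy of the topological entropy on $\SPH1$ (via Remark~\ref{r.baseconjugate} and the identity $h_{\mathrm{top}}(g)=h_{\mathrm{top}}(g_c)$, since circle fibers carry no entropy); and joint continuity of the center Lyapunov exponent $\lambda^c(\mu,g)=\int \log\|Dg|_{E^c_g}\|\,d\mu$ in $(\mu,g)$. The key closure property these furnish is: if $g_n\to f$ in $C^1$ and $\mu_n$ is an ergodic maximal measure of $g_n$, then any weak$^*$ accumulation point $\mu$ is a maximal measure of $f$, and $\lambda^c(\mu,f)=\lim_n \lambda^c(\mu_n,g_n)$.

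I first handle the case where $f$ has hyperbolic periodic points. Theorem~\ref{main.skeleton} provides skeletons $S(f)=\{p_1,\dots,p_k\}$ and $S(f^{-1})=\{q_1,\dots,q_l\}$ and exactly $k+l$ ergodic maximal measures. By Proposition~\ref{p.skeleton}(b), the set $\{p_1(g),\dots,p_k(g)\}$ is a pre-skeleton of any $g$ in a $C^1$-neighborhood $\cU$ of $f$; by (d) it contains a skeleton of $g$, of size at most $k$. Theorem~\ref{main.skeleton} then bounds the number of ergodic maximal measures of $g$ with negative center exponent by $k$, and symmetrically by $l$ for positive. Proposition~\ref{p.skeleton}(c) converts the absence of new heteroclinic intersections among the $p_i(g)$ into the stronger statement that $\{p_1(g),\dots,p_k(g)\}$ is itself a skeleton of $g$, whereupon Theorem~\ref{main.skeleton} yields exact equality. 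For the continuity clause, let $\mu^-_i(g)$ denote the ergodic maximal measure canonically indexed by $p_i(g)$ via Theorem~\ref{main.skeleton}(a). For $g_n\to f$ with $\{p_i(g_n)\}$ a skeleton of each $g_n$, any weak$^*$ limit $\mu$ of $\mu^-_i(g_n)$ is maximal for $f$ by the closure property, hence a convex combination of the $\mu^\pm_j(f)$. Since $p_i(g_n)\to p_i$ lies in $\supp\mu$, combining the $\cF^u$-minimality of support components from Theorem~\ref{main.skeleton} with skeleton condition (c) and a $\lambda$-lemma argument, together with an index/transversality argument against $p_i\in\supp\mu^+_j(f)$, should pin the limit to $\mu=\mu^-_i(f)$. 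The case of the $\mu^+_j$ measures indexed by $S(f^{-1})$ is symmetric.

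For the first part of the statement, Theorem~\ref{main.skeleton} forces any $f$ with no hyperbolic periodic points to be rotation type, with unique maximal measure $\mu_0$ of full support and vanishing center exponent. The ``at most two'' bound amounts to showing $k_g\le 1$ and $l_g\le 1$ for $g$ in a $C^1$-neighborhood of $f$. My plan here is by contradiction: two distinct ergodic maximal measures of $g_n\to f$ with negative center exponent would both weak$^*$-converge to $\mu_0$, while their $\cF^u_{g_n}$-minimal supports (from Theorem~\ref{main.skeleton}) must remain separated by skeleton condition (c) for $g_n$---a conflict with the full support and zero exponent of $\mu_0$ and the circle-bundle structure. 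The selection functions $\Gamma^\pm$ are then defined by the sign of the center exponent (both equal to the unique maximal measure in the rotation case), with continuity from the closure property and the established uniqueness. The main obstacles I anticipate are the two identification steps: ruling out extra components of the limit in the previous paragraph (in particular $p_i\in\supp\mu^+_j(f)$, which does not follow from the skeleton conditions alone), and producing the skeleton-size bound in the present paragraph where, since $f$ has no hyperbolic periodic points, there are no $C^1$-continuations to follow and the argument must rely on finer geometric structure within $\SPH1$.
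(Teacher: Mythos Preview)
Your hyperbolic case is largely on track, but the identification step has a real gap. When you pass to a weak$^*$ limit $\mu$ of $\mu_i^-(g_n)$, you know $\mu$ is maximal for $f$, so $\mu=\sum a_j\mu_j^-+\sum b_j\mu_j^+$. Continuity of $\lambda^c$ gives $\lambda^c(\mu,f)\le 0$, but this does \emph{not} force all $b_j=0$: a convex combination can have nonpositive integrated center exponent while some ergodic components have positive exponent. Your ``index/transversality argument against $p_i\in\supp\mu_j^+(f)$'' is not available either, since nothing in Theorem~\ref{main.skeleton} prevents $\supp\mu_i^-$ and $\supp\mu_j^+$ from intersecting. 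The paper avoids this entirely by working in the $\nu$-Gibbs $u$-state framework: $\mu_i^-(g_n)\in\Gibb^u_\nu(g_n)$ by Lemma~\ref{l.maximalmeasureandgibbsustates}, and Proposition~\ref{p.Gibbsustates}(c) gives $\mu\in\Gibb^u_\nu(f)=\MM^-(f)$ directly, so only negative-exponent components appear. After that, the heteroclinic-intersection argument you sketch (the paper's final lemma) finishes the identification.

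The rotation-type case is where your plan genuinely breaks down. Your contradiction---two disjoint $\cF^u_{g_n}$-saturated supports both weak$^*$-approaching the full-support measure $\mu_0$---is not a contradiction at all: disjoint $u$-saturated sets can each be Hausdorff-close to $M$ (think of two distinct dense unstable leaves), and skeleton condition (c) for $g_n$ says nothing about limits as $n\to\infty$. The paper's argument is of a different nature and supplies the ``finer geometric structure'' you anticipated needing: using accessibility of $f$, one finds stable leaves that \emph{cross} pairs of unstable plaques (Lemma~\ref{l.topologicaltransverse}), and this crossing is $C^1$-robust (Corollary~\ref{c.robusttwist}). Combined with the density of forward unstable orbits coming from \eqref{eq.denseusegment}, a criterion (Lemma~\ref{l.uniqueminimal}) then gives that every $g$ near $f$ has a \emph{unique} compact invariant $u$-saturated subset (Proposition~\ref{p.uniqueminimalcomponent}). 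Since distinct ergodic maximal measures with negative center exponent have disjoint $u$-saturated supports, this forces $k_g\le 1$, and symmetrically $l_g\le 1$. Continuity of $\Gamma^-$ then again comes from the $\nu$-Gibbs $u$-state picture: $\Gamma^-(g)$ is the unique element of $\Gibb^u_\nu(g)$ (Lemma~\ref{l.uniquegibbs}), and Proposition~\ref{p.Gibbsustates}(c) gives upper semi-continuity, hence continuity by uniqueness.
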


It is worthwhile interpreting and detailing the contents of Theorems~\ref{main.skeleton} and~\ref{main.robust} in
more geometric terms, by means of the space $\MM(f)$ of all maximal measures of $f$.

Since the metric entropy is an upper semi-continuous
function of the diffeomorphism and the invariant probability measure (see \cite{LVY}), $\MM(f)$ is a non-empty compact subset of
the space of invariant probability measures, and it varies upper semi-continuously with $f$.
Since the metric entropy function is affine, $\MM(f)$ is a convex set.
Its extreme points are precisely the ergodic maximal measures, whose set we denote as $\MM_{erg}(f)$.
Thus, the ergodic components of any maximal measure are ergodic maximal measures.
Theorem~\ref{main.skeleton} gives that $\MM(f)$ is a simplex of finite dimension, for any $f\in\SPH1$.

In the first case of Theorem~\ref{main.robust}, that is, if $f$ has no hyperbolic periodic points,
$\MM(f)$ reduces to a point.
Then for every $g$ close to $f$ in the $C^1$ topology, $\MM(g)$ is either a point or a segment whose endpoints
are ergodic maximal measures with positive center exponent and negative center exponent, respectively.
Moreover, $\MM(g)$ is close to $\MM(f)$ in the weak$^*$ topology.

In the second case of Theorem~\ref{main.robust}, the set $\MM_{erg}(f)$ of extreme points of $\MM(f)$ splits into two
subsets, $\MM^-_{erg}(f)$ and $\MM^+_{erg}(f)$, consisting of the ergodic maximal measures with negative center exponent
and positive center exponent, respectively. Each $\MM^\sigma_{erg}(f)$, $\sigma=\{+, -\}$ generates a finite-dimensional
sub-simplex $\MM^\sigma(f)$. In the following, we discuss only $\MM^-(f)$: similar observations apply to $\MM^+(f)$.

We can mark each extreme point $\mu_i$ ($i=1,\cdots, k$) in $\MM^-_{erg}(f)$ by any periodic point $p_i\in\supp\mu_i$
with stable index equal to the dimension of $E^{cs}$. These periodic points form a skeleton $S=\{p_1, \dots, p_k\}$ of $f$,
and $\Cl(W^u(\Orb(p_i)))$ coincides with the support of $\mu_i$ for every $i$.
Fix $S$ and let $S(g)=\{p_1(g), \dots, p_k(g)\}$ be its hyperbolic continuation, for any nearby diffeomorphism $g$.
If $g$ is close to $f$, the space $\MM^-(g)$ is contained in a small neighborhood of $\MM^-(f)$.

There are two situations to be considered.
If there are no heteroclinic intersections between the saddle points $p_i(g)$ then $\dim(\MM^-(g))=\dim(\MM^-(f))$,
and the set $\MM^{-}_{erg}(g)$ of extreme points is close to $\MM^{-}_{erg}(f)$.
Otherwise, if heteroclinic intersections are indeed created between distinct saddles $p_i(g)$ and $p_j(g)$,
different ergodic maximal measures with negative center exponent ``merge'' with one another, so that $\dim(\MM^-(g))<\dim(\MM^-(f))$.



\section{Preliminaries}

Since each $\pi_c^{-1}(x_c)$, $x_c\in M_c$ is a circle with uniformly bounded length, and $f$ acts by homeomorphisms
on those circles, the projection $\pi_c$ preserves the topological and metric entropies:
\begin{equation}\label{eq.preservesboth}
\begin{aligned}
h_{top}(f) & = h_{top}(f_c) \text{ and} \\
h_\mu(f) & = h_{(\pi_c)_*\mu}(f_c)
\text{ for every $f$-invariant probability measure $\mu$.}
\end{aligned}
\end{equation}
In particular, $\MM(f)$  coincides with the set of $f$-invariant probability measures $\mu$ such that $(\pi_c)_*(\mu)=\nu$,
where $\nu$ denotes the maximal measure of $f_c$ (Remark~\ref{r.baseconjugate}).

The part of Theorem~\ref{main.skeleton} stating that there are finitely many ergodic maximal measures was proven
in \cite[Theorem~1]{HHTU}:

\begin{proposition}\label{p.finiteness}
Let $f\in \SPH1$, then:
\begin{itemize}
\item[(a)] either $f$  admits a unique maximal measure, with vanishing center exponent, in which case $f$ is rotation type,
\item[(b)] or $f$ has more than one ergodic maximal measure, all of them with non-vanishing central Lyapunov exponents,
not all with the same sign.
\end{itemize}
\end{proposition}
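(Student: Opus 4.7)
The plan is to reduce the study of $\MM(f)$ to the analysis of $f$-invariant probability measures on $M$ that project, under $\pi_c$, to the unique maximal measure $\nu$ of $f_c$; this reduction is exactly the content of~\eqref{eq.preservesboth}. Each such $\mu$ admits a Rokhlin disintegration $\{\mu_{x_c}\}$ along the center circles $\pi_c^{-1}(x_c)$, and the key observable is the center Lyapunov exponent $\lambda^c(\mu)$, which is an affine function of $\mu$.

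I would first dispose of the rotation-type alternative. Suppose some ergodic $\mu\in\MM(f)$ satisfies $\lambda^c(\mu)=0$. A vanishing center exponent, together with accessibility, should force the $su$-holonomies between center circles to preserve the family of conditionals $\mu_{x_c}$, and an invariance-principle argument of Avila--Viana--Wilkinson type would then upgrade this to absolute continuity of $\mu_{x_c}$ with respect to arc length on $\nu$-a.e.\ fiber. Combined with accessibility and the circle-bundle structure, this yields a continuous conjugacy between $f$ and a rotation extension of $f_c$, so $f$ is rotation type and $\mu$ is its unique MME.

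If $f$ is not rotation type, then every ergodic $\mu\in\MM(f)$ has $\lambda^c(\mu)\neq 0$. For an ergodic $\mu$ with $\lambda^c(\mu)<0$, the entropy identity $h_\mu(f)=h_\nu(f_c)$ combined with a Ledrappier--Young/Pesin estimate forces the disintegration $\mu_{x_c}$ to be purely atomic on $\nu$-a.e.\ center circle, since any diffuse component would contribute extra center entropy beyond $h_\nu(f_c)$. The atomic support is a compact $f$-invariant set meeting each center circle in finitely many points, and its periodic points of stable index $\dim E^{cs}$ are hyperbolic saddles; moreover one shows $\supp\mu=\Cl(\cF^u(\Orb(p)))$ for such a saddle $p$. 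Distinct ergodic MME of this type have disjoint supports, so finiteness follows from a uniform bound on the number of atoms per fiber. The symmetric argument applied to $f^{-1}$, whose maximal measures coincide with those of $f$ but with center exponents of opposite sign, both handles the case $\lambda^c(\mu)>0$ and rules out the possibility that all ergodic MME of $f$ share a common sign of center exponent.

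The main obstacle is the zero-exponent rigidity step: showing that the mere existence of a single ergodic MME with vanishing center exponent is enough to force $f$ itself to be rotation type. This is where dynamical coherence, accessibility, and the circle-bundle hypothesis all enter essentially, and is the technical heart of \cite[Theorem~1]{HHTU}.
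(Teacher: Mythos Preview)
The paper does not actually prove this proposition: it is quoted from \cite[Theorem~1]{HHTU}, with Remark~\ref{r.complement} noting only that case~(a) rests on the invariance principle of~\cite{AV}. Your outline is in the right spirit---reduce to lifts of $\nu$, use the invariance principle to handle the zero-exponent case, and treat the two signs separately---and you correctly flag the zero-exponent rigidity as the hard step.

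Two points in your sketch do not go through as written. First, the entropy argument for atomicity of the center conditionals when $\lambda^c(\mu)<0$ fails: since the fibers are circles on which $f$ acts by homeomorphisms, the fiber entropy in the Abramov--Rokhlin decomposition is identically zero, so $h_\mu(f)=h_\nu(f_c)$ holds for \emph{every} invariant $\mu$ projecting to $\nu$ (this is exactly~\eqref{eq.preservesboth}) and carries no information about whether $\mu_{x_c}$ is atomic or diffuse. In~\cite{HHTU} (and in Section~\ref{ss.mostlycontracting} of the present paper) finiteness comes instead from the $u$-invariance of the disintegration---supplied by the invariance principle because $\lambda^c\le 0$---together with Pesin stable manifolds of dimension $\dim E^{cs}$ and a basin-overlap argument, not from an entropy count. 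Second, the symmetry argument does not by itself rule out all ergodic maximal measures having center exponents of a single sign: passing to $f^{-1}$ just flips every sign, which yields no contradiction. The missing ingredient is an \emph{existence} step. One constructs a $\nu$-Gibbs $u$-state (e.g.\ as a weak$^*$ limit of averages of the reference measures $\nu^u_x$), which is automatically $u$-invariant; were its center exponent non-negative it would also be $s$-invariant by the invariance principle applied to $f^{-1}$, and accessibility would then force rotation type (cf.\ Proposition~\ref{p.rigidityofsuinvariant}). Hence outside the rotation case every $\nu$-Gibbs $u$-state has strictly negative center exponent, and the dual $s$-state construction produces a maximal measure with strictly positive exponent.
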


\begin{remark}\label{r.complement}
The analysis of case (a) relies on the invariance principle of \cite{AV}. One gets that the maximal
measure admits a continuous disintegration along the center foliation, and the corresponding
conditional probabilities are equivalent to Lebesgue measure on each center leaf.
Moreover, the disintegration is both $s$-invariant and $u$-invariant,
meaning that the conditional probabilities are preserved by both the stable holonomies and the unstable holonomies.
In particular, the support of the maximal measure coincides with the ambient manifold.
The diffeomorphisms satisfying the conditions in case (b) form a $C^1$ open and $C^\infty$ subset.
\end{remark}

\subsection{Unstable partitions and invariant holonomies}

Let $\{\tB_1,\cdots, \tB_k\}$ be a Markov partition for $f_c:M_c\to M_c$ such that $\nu(\cup_{1\leq i \leq k}\partial \tB_i)=0$.
For every $x_c\in \tB_i\subset M_c$, let $\cW^u_{loc}(x_c)$ be the connected component of $\cW^u(x_c) \cap \tB_i$
that contains $x_c$, and let $\txi$ be the partition of $M_c$ whose elements are those local unstable sets.
This partition $\txi$ is measurable (in the sense of Rokhlin~\cite{R}, see \cite[Chapter~5]{FET}) and increasing,
meaning that $f\txi\prec \txi$.
Let $\{\nu^u_{x_c}: x_c\}$ be a Rokhlin disintegration of $\nu$ relative to this partition.
%
%
%

For each $x_c$ and $y_c$ in the same $\tB_i$, and any $z\in \txi(x_c)$, let $\tcH^s_{x_c,y_c}(z)$
be the unique point where $\cW^s_{loc}(z)$ intersects $\txi(y_c)$.
The map $\tcH^s_{x_c,y_c}: \txi(x_c)\to\txi(y_c)$ thus defined is called \emph{stable holonomy map}
from $x_c$ to $y_c$.

\begin{lemma}\label{l.sjac}
$(\tcH^s_{x_c,y_c})_* \nu^u_{x_c}=\nu^u_{y_c}$ for $\nu$-almost any points $x_c$ and $y_c$ in the same Markov set $\tB_i$.
\end{lemma}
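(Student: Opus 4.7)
The plan is to exploit a local product structure of the maximal measure $\nu$ inside each Markov rectangle $\tB_i$, from which the preservation of the unstable conditionals by the stable holonomy follows via a Fubini-type argument.

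The first step is to establish the local product structure. Fix a reference point $x^*_i\in\tB_i$, and set $W^s_i=\cW^s_{\loc}(x^*_i)\cap\tB_i$ and $W^u_i=\cW^u_{\loc}(x^*_i)\cap\tB_i$. The bracket $[u,s]:=\cW^u_{\loc}(s)\cap\cW^s_{\loc}(u)$ yields a homeomorphism $\Phi_i:W^u_i\times W^s_i\to\tB_i$ under which the unstable plaque through $[u,s]$ corresponds to the slice $W^u_i\times\{s\}$ and the stable plaque to $\{u\}\times W^s_i$. The claim is that there exist finite Borel measures $\mu^u_i$ on $W^u_i$ and $\mu^s_i$ on $W^s_i$ such that $\nu|\tB_i=\Phi_{i*}(\mu^u_i\otimes\mu^s_i)$. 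To verify this, I would code $(f_c,\nu)$ by the subshift of finite type $(\Sigma_A,\sigma)$ associated with the Markov partition: by Remark~\ref{r.baseconjugate} this coding is a measurable isomorphism between $\nu$ and the Parry measure $\hat\nu$, the unique measure of maximal entropy for $\sigma$. Because $\hat\nu$ is a Markov measure, the mass of any cylinder $[a_{-m}\cdots a_n]$ factors, up to a common $a_0$-weight, as a product of a term depending only on the past $a_{-m}\cdots a_0$ and one depending only on the future $a_0\cdots a_n$. This is exactly the symbolic counterpart of the desired product $\mu^u_i\otimes\mu^s_i$; the hypothesis $\nu(\cup_i\partial\tB_i)=0$ ensures that the coding is a measurable bijection onto a full-measure subset, so the product structure descends to each $\tB_i$.

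Granted this, the rest is bookkeeping. Applying the Rokhlin--Fubini theorem to $\mu^u_i\otimes\mu^s_i$, the conditional of $\nu|\tB_i$ on the unstable slice at level $s$ is $\mu^u_i$ normalized by the total $\mu^s_i$-mass; equivalently, the disintegration $\{\nu^u_{x_c}\}$ along $\txi$, restricted to $\tB_i$, is a constant multiple of the push-forward of $\mu^u_i$ under $u\mapsto[u,s_{x_c}]$, where $s_{x_c}\in W^s_i$ is the stable coordinate of $x_c$. Now fix $x_c,y_c\in\tB_i$. For $z=[u,s_{x_c}]\in\txi(x_c)$ one has $\cW^s_{\loc}(z)=\cW^s_{\loc}(u)$, hence $\tcH^s_{x_c,y_c}(z)=\cW^s_{\loc}(z)\cap\txi(y_c)=[u,s_{y_c}]$. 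Thus $\tcH^s_{x_c,y_c}$ is, in the bracket coordinates, the identity on the first factor, so it sends the $\mu^u_i$-distribution on $\txi(x_c)$ to the $\mu^u_i$-distribution on $\txi(y_c)$; that is, $(\tcH^s_{x_c,y_c})_*\nu^u_{x_c}=\nu^u_{y_c}$.

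The main obstacle is establishing the local product structure itself. While the analogous statement is classical for SRB or Gibbs equilibrium states on smooth uniformly hyperbolic systems, here $f_c$ is only a topological Anosov homeomorphism, so one must pass through the symbolic coding. Some care is needed to align the geometric rectangles $\tB_i$ precisely with past/future cylinders in $\Sigma_A$ (using the defining Markov property that the $s$-boundary is mapped inside the $s$-boundary and likewise for the $u$-boundary) and to handle the $\nu$-null set $\cup_i\partial\tB_i$; once that bookkeeping is done, the Rokhlin--Fubini conclusion above is routine.
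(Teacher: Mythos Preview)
Your argument is correct, but it takes a substantially heavier route than the paper. The paper simply invokes Remark~\ref{r.baseconjugate}: since $M_c$ is a torus and $f_c$ is topologically Anosov, Hiraide's theorem gives a conjugacy to a \emph{linear} Anosov automorphism. Under this conjugacy $\nu$ becomes Lebesgue area, each $\nu^u_{x_c}$ becomes normalized Lebesgue length on a straight segment, and the stable holonomies $\tcH^s_{x_c,y_c}$ become affine maps (indeed translations) between parallel segments; preservation of normalized length is then immediate. Your approach via symbolic coding and the product structure of the Parry measure is valid and yields the same conclusion, and it has the advantage of not relying on the torus hypothesis~\eqref{eq.transtivebase} or on the existence of a linear model---it would go through for any transitive topological Anosov system admitting a Markov partition. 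The price is the bookkeeping you flag: aligning geometric rectangles with symbolic cylinders and controlling the boundary set. In the present paper that generality is not needed, since the linear model is already available and is used repeatedly elsewhere (e.g.\ in Proposition~\ref{p.limit} and the remarks following the lemma).
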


This is well known (see \cite{B}, for example) and can be proven as follows.
Up to conjugacy, we may view $f_c$ as a linear Anosov torus diffeomorphism, and then the stable holonomy maps
$\tcH^s_{x_c,y_c}$ are affine.
Moreover, $\nu$ corresponds to the Lebesgue area, and each $\nu^u_{x_c}$ corresponds to the normalized Lebesgue length
along the plaque $\txi(x_c)$.
Thus the lemma follows from the fact that affine maps preserve normalized Lebesgue length.

In general, Rokhlin disintegrations are defined up to zero measure sets only.
However, Lemma~\ref{l.sjac} ensures that in the present setting there is a canonical choice for which the conditional
measure is defined on every plaque $\txi(z_c)$ and depends continuously on $z_c$ on every $\tB_i$.
Indeed, for each $i$, choose $x^i_c \in \tB_i$ as in the lemma, and then define
$$
\hnu^u_{z_c} = (\tcH^s_{x^i_c,z_c})_* \nu^u_{x^i_c} \text{ for every } z_c \in \tB_i
$$
(the definitions may not coincide on overlapping boundaries of different Markov sets, but that need not
concern us too much, since the assumptions ensure that such overlaps have zero measure for $\nu$).
By the lemma, $\hnu^u_{z_c}=\nu^u_{z_c}$ for $\nu$-almost $z_c \in \tB_i$, and so $\{\hnu^u_{z_c}: z_c\}$
is also a disintegration of $\nu$.
Just replace the initial disintegration with this one, and notice that it does not depend on the choices of the points $x^i_c$.

Then we also have that (keep in mind that $\txi$ is an increasing partition)
\begin{equation}\label{eq.baseMarkov}
(f_c)_*\left(\nu^u_{f_c^{-1}(x_c)}\mid_{f_c^{-1}(\txi(x_c))}\right)=\nu^u_{f^{-1}(x_c)}\left(f_c^{-1}(\txi(x_c))\right)\nu^u_{x_c}
\end{equation}
for every point $x_c\in M_c$. It is easy to see that,
for a linear Anosov map, the factor $\nu^u_{f^{-1}(x_c)}(f_c^{-1}(\txi(x_c)))$
takes only finitely many values. This fact will be useful later.

\begin{proposition}\label{p.limit}
For any $z_c\in M_c$ and $\Delta\subset \cW^u_{loc}(z_c)$ with positive $\nu^u_{z_c}$-measure,
$$
\lim_n \frac{1}{n}\frac{1}{\nu^u_{z_c}(\Delta)}\sum_{i=1}^{n}(f_c^i)_* (\nu^u_{z_c}|\Delta)=\nu.
$$
\end{proposition}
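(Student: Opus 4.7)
The plan is to reduce to a linear Anosov model on the torus and use equidistribution of long affine unstable plaques. By Remark~\ref{r.baseconjugate} I identify $f_c$ with a linear Anosov diffeomorphism on a torus; the maximal measure $\nu$ then becomes Haar measure, and each $\hnu^u_{y_c}$ becomes the normalized Lebesgue measure on the affine plaque $\txi(y_c)$. Fix a continuous $\phi:M_c\to\RR$; weak$^*$ convergence of the Cesàro average in the statement amounts to
$$
\frac{1}{n\,\hnu^u_{z_c}(\Delta)}\sum_{i=1}^{n}\int_{\Delta}\phi\circ f_c^{i}\,d\hnu^u_{z_c}\;\longrightarrow\;\int\phi\,d\nu.
$$

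Because the unstable Jacobian of a linear Anosov is a constant, the change of variables $y=f_c^i(x)$ rewrites the $i$-th inner integral as the average of $\phi$ over $f_c^i(\Delta)$ with respect to the normalized unstable Lebesgue measure. The task therefore reduces to showing that, as $i\to\infty$, these averages over the long affine sets $f_c^i(\Delta)$ converge to $\int\phi\,d\nu$.

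The decisive geometric fact is that the unstable subbundle of a hyperbolic toral automorphism has no rational directions: otherwise it would project to a proper invariant subtorus, contradicting the Perron--Frobenius structure of the expanding eigenvalues. Hence the long affine plaques $f_c^i(\Delta)$ equidistribute on the torus with respect to Haar measure, a classical fact provable by Fourier analysis on the torus. Each individual term of the Cesàro average thus tends to $\int\phi\,d\nu$, and so does the average itself. If one prefers an argument intrinsic to $f_c$ that avoids the smooth conjugacy, the same conclusion follows by iterating \eqref{eq.baseMarkov} to decompose $(f_c^i)_*(\hnu^u_{z_c}|\Delta)$ as a weighted sum of $\hnu^u_y$ on plaques $\txi(y)$ meeting $f_c^i(\Delta)$, and applying Lemma~\ref{l.sjac} to identify the interior contribution from each Markov rectangle $\tB_k$ with $\int_{\tB_k}\phi\,d\nu$.

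The main obstacle is the bookkeeping of boundary corrections—partial plaques of $f_c^i(\Delta)$ near $\partial\tB_k$, and residual oscillation of $y\mapsto \int\phi\,d\hnu^u_y$ inside each $\tB_k$—which prevent a clean one-step estimate. Cesàro averaging in the statement is the mechanism that absorbs these sub-leading errors, so no quantitative rate of equidistribution is needed.
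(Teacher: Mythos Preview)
Your approach is essentially the paper's: reduce via Remark~\ref{r.baseconjugate} to a linear Anosov torus automorphism, identify $\nu$ with Haar measure and each $\nu^u_{z_c}$ with normalized Lebesgue on an affine unstable plaque, and then invoke equidistribution of long unstable pieces---the paper simply cites \cite[Section~11.12]{BDV} for this last step, while you sketch the mechanism. One small correction to your sketch: the condition driving equidistribution is that $E^u$ lies in no rational hyperplane (so leaves are dense), not that $E^u$ contains no rational vector; the clean argument is that any nonzero $k\in\ZZ^n$ with $E^u\subset k^{\perp}$ would satisfy $k\in E^s(A^{\top})\cap\ZZ^n$ and hence be iterated to~$0$ by $(A^{\top})^m$, a contradiction.
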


\begin{proof}
Up to conjugacy, we may view $f_c$ as a linear Anosov torus diffeomorphism.
Then $\nu$ is the Lebesgue area and $\nu^u_{z_c}$ the normalized Lebesgue length on the plaque $\txi(z_c)$.
Then the statement reduces to \cite[Section~11.12]{BDV}.
\end{proof}

Next, denote $B_i=(\pi_c)^{-1}(\tB_i)$ for each $i$. For every $x\in B_i \subset M$, let $\cF^u_{loc}(x)$ be
the connected component of $\cF^u(x)\cap B_i$ that contains $x$.
Denote by $\xi$ the partition of $M$ whose atoms are those local unstable leafs.
It is clear that $\xi$ is an increasing partition for $f$. For every $x\in M$, denote $x_c=\pi_c(x)\in M_c$.
Then $\pi_c\mid_{\xi(x)}$ is a homeomorphism from $\xi(x)$ onto $\txi(x_c)$.
By a slight abuse of language, we denote by $\nu^u_x$ the probability measure on $\xi(x)$ such that
$(\pi_c)_*(\nu^u_x)=\nu^u_{x_c}$.

For $x$ and $y$ in the same $B_i$, and any $z\in\xi(x)$ denote by $\cH^{cs}_{x,y}(z)$ the unique point in the
intersection of $\cF^{cs}_{loc}(z)$ with $\xi(y)$.
The map $\cH^{cs}_{x,y}:\xi(x)\to\xi(y)$ defined in this way is called the \emph{center-stable holonomy map}
from $x$ to $y$. Lemma~\ref{l.sjac} and the remarks about the canonical choice of a
disintegration following it immediately yield:

\begin{lemma}\label{l.csjac_Markov_finiteness}~
\begin{itemize}
\item[(a)] $(\cH^{cs}_{x,y})_* \nu^u_{x}=\nu^u_{y}$ for any two points $x,y\in B_i$.
\item[(b)] for every $x\in M$,
$$
f_* \left(\nu^u_{f^{-1}(x)}\mid_{f^{-1}(\xi(x))}\right)=\nu^u_{f^{-1}(x)}\left(f^{-1}(\xi(x))\right)\nu^u_{f(x)}
$$

\item[(c)] the factor $\nu^u_{f^{-1}(x)}(f^{-1}(\xi(x)))$, $x\in M$ takes only finitely many values.
\end{itemize}
\end{lemma}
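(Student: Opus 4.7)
All three parts should follow by transporting the base-level statements (Lemma~\ref{l.sjac} and equation~\eqref{eq.baseMarkov}) up through the semi-conjugacy $\pi_c\colon M\to M_c$. The essential observation is that on each Markov block $B_i$ the restriction $\pi_c|_{\xi(x)}\colon\xi(x)\to\txi(x_c)$ is a homeomorphism, so $\nu^u_x$ is the \emph{unique} Borel probability on $\xi(x)$ satisfying $(\pi_c)_*\nu^u_x = \nu^u_{x_c}$. Consequently, every equality of measures on unstable plaques in $M$ can be checked after applying $(\pi_c)_*$ and appealing to the corresponding identity on the base.

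For part (a), I first record the intertwining
$$\pi_c\circ\cH^{cs}_{x,y} = \tcH^s_{x_c,y_c}\circ\pi_c \qquad\text{on } \xi(x),$$
which is immediate from the definitions: a local center-stable plaque $\cF^{cs}_{loc}(z)$ projects onto $\cW^s_{loc}(z_c)$ because center leaves collapse to points, and the unique intersection with $\xi(y)$ upstairs corresponds to the unique intersection with $\txi(y_c)$ downstairs. Pushing $\nu^u_x$ through both sides yields
$$(\pi_c)_*(\cH^{cs}_{x,y})_*\nu^u_x = (\tcH^s_{x_c,y_c})_*\nu^u_{x_c} = \nu^u_{y_c},$$
and uniqueness of the lift gives (a). Parts (b) and (c) are handled in the same spirit, using $\pi_c\circ f = f_c\circ\pi_c$ and the fact that $\pi_c$ carries $f^{-1}(\xi(x))$ homeomorphically onto $f_c^{-1}(\txi(x_c))$. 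Applying $(\pi_c)_*$ to the left-hand side of (b) and invoking \eqref{eq.baseMarkov} produces exactly the projection of the intended right-hand side, with coefficient $\nu^u_{f^{-1}(x)}(f^{-1}(\xi(x))) = \nu^u_{f_c^{-1}(x_c)}(f_c^{-1}(\txi(x_c)))$ preserved by $\pi_c$; and (c) is then an immediate consequence of the finiteness remark following \eqref{eq.baseMarkov}.

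The main point that needs care is that Lemma~\ref{l.sjac} is a priori only an almost-everywhere statement, whereas (a) is claimed for \emph{every} pair $x,y\in B_i$. This is precisely why the discussion following Lemma~\ref{l.sjac} upgraded the Rokhlin disintegration $\{\nu^u_{z_c}\}$ to the canonical, everywhere-defined choice that depends continuously on the base point within each $\tB_i$; once that canonical choice is used both upstairs and downstairs, the proof becomes a routine diagram chase along $\pi_c$ and I do not foresee any further obstacle.
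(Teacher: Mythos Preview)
Your proposal is correct and follows the same approach the paper intends: the paper states the lemma as an immediate consequence of Lemma~\ref{l.sjac}, the canonical (everywhere-defined) disintegration constructed just after it, and equation~\eqref{eq.baseMarkov}, which is exactly the lift-through-$\pi_c$ argument you spell out. You have simply made explicit the diagram chase and the uniqueness-of-lift reasoning that the paper leaves to the reader, including the point about upgrading from almost-everywhere to everywhere via the canonical choice of conditional measures.
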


\subsection{Partial entropy, $\nu$-Gibbs $u$-states and $u$-invariant measures}

Let $\mu$ be any probability measure invariant under $f\in \SPH1$.
We call $\mu$ a \emph{$\nu$-Gibbs $u$-state} if it admits a disintegration $\{\mu^u_x: x\}$
with respect to the partition $\xi$ such that $\mu^u_x=\nu^u_x$ for $\mu$-almost every point $x\in M$.
This is a variation of a general notion due to \cite{PS}:
the main difference with respect to the standard definition (see \cite[Chapter~11]{BDV},
for instance) is that here we replace Lebesgue with $\nu$ as the reference measure.
Let $\Gibb^u_\nu(f)$ denote the space of $\nu$-Gibbs $u$-states of $f$.

\begin{lemma}\label{l.MM}
$\Gibb^u_\nu(f) \subset \MM(f)$ for every $f\in\SPH1$.
\end{lemma}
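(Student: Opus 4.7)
The plan is to show that any $\mu\in\Gibb^u_\nu(f)$ projects to $\nu$ under $\pi_c$, from which maximality follows immediately through the entropy identity \eqref{eq.preservesboth}. Indeed, since $\nu$ is the unique measure of maximal entropy of $f_c$ (Remark~\ref{r.baseconjugate}), once we know $(\pi_c)_*\mu=\nu$ the second line of \eqref{eq.preservesboth} gives
\[
h_\mu(f)=h_{(\pi_c)_*\mu}(f_c)=h_\nu(f_c)=h_{top}(f_c)=h_{top}(f),
\]
so $\mu\in\MM(f)$.

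To identify $(\pi_c)_*\mu$ with $\nu$, I would use the Gibbs $u$-state structure together with Proposition~\ref{p.limit}. Let $\hat\mu$ denote the quotient measure induced by $\mu$ on $M/\xi$. By the definition of $\nu$-Gibbs $u$-state,
\[
\mu=\int \nu^u_x\,d\hat\mu(\hat x),
\]
and pushing forward via $\pi_c$ (using that by construction $(\pi_c)_*\nu^u_x=\nu^u_{x_c}$) gives
\[
(\pi_c)_*\mu=\int \nu^u_{x_c}\,d\hat\mu(\hat x).
\]
Now fix any continuous test function $\psi:M_c\to\RR$. Since $(\pi_c)_*\mu$ is $f_c$-invariant, for every $n\geq 1$,
\[
\int\psi\,d(\pi_c)_*\mu
=\int\Bigl(\frac{1}{n}\sum_{i=0}^{n-1}\int\psi\,d(f_c^i)_*\nu^u_{x_c}\Bigr)d\hat\mu(\hat x).
\]
Proposition~\ref{p.limit}, applied with $\Delta=\txi(x_c)$ (which has full $\nu^u_{x_c}$-measure), says that the inner Cesàro average converges to $\int\psi\,d\nu$ at every point $\hat x$. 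Since $\psi$ is bounded, dominated convergence lets us pass to the limit and conclude that $\int\psi\,d(\pi_c)_*\mu=\int\psi\,d\nu$ for every continuous $\psi$, hence $(\pi_c)_*\mu=\nu$.

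The conceptual step I expect to require the most care is the fact that we can legitimately manipulate the family $\{\nu^u_x\}$ pointwise, rather than merely almost everywhere. This is precisely what the canonical choice of disintegration discussed after Lemma~\ref{l.sjac} buys us: the conditional measures $\nu^u_{x_c}$ (and hence $\nu^u_x$) are defined on every plaque and depend continuously on the base point within each Markov piece, so that the inner integral $x\mapsto \int\psi\,d(f_c^i)_*\nu^u_{x_c}$ is well defined and bounded, and Fubini-type exchanges are justified. Once that point is settled, the argument is essentially a one-line application of Proposition~\ref{p.limit} followed by bounded convergence.
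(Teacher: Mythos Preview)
Your argument is correct, and it reaches the same intermediate goal as the paper---namely $(\pi_c)_*\mu=\nu$---but by a genuinely different route. The paper's proof is two lines: it observes that the disintegration of $(\pi_c)_*\mu$ along $\txi$ coincides with $\{\nu^u_{x_c}\}$ and then concludes $(\pi_c)_*\mu=\nu$ directly, implicitly relying on the classical fact that an $f_c$-invariant measure on the torus whose conditional measures on unstable plaques are the Lebesgue (SRB) conditionals is uniquely determined (hence equals $\nu$). You instead avoid that external uniqueness principle: using the $f_c$-invariance of $(\pi_c)_*\mu$, you rewrite $\int\psi\,d(\pi_c)_*\mu$ as an average of pushforwards of the $\nu^u_{x_c}$, apply Proposition~\ref{p.limit} pointwise, and pass to the limit by dominated convergence. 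The paper's approach is shorter; yours is entirely self-contained within the tools the paper has already set up, and it makes explicit use of the canonical (everywhere-defined, continuous) disintegration---which is exactly why that construction was introduced. The minor offset between your sum over $i=0,\dots,n-1$ and the proposition's sum over $i=1,\dots,n$ is of course harmless for the limit.
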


\begin{proof}
Let $\mu\in\Gibb^u_\nu(f)$. Then, by definition, the disintegration of $(\pi_c)_*(\mu)$ along the
partition $\txi$ coincides with $\{\nu^u_{x^c}: x^c\}$.
Thus $(\pi_c)_*(\mu)=\nu$, and so $\mu\in\MM(f)$.
\end{proof}

Let $\mu$ be any probability measure invariant under $f\in \SPH1$.
Denote by $\{\mu^c_x:x\}$ the disintegration of $\mu$ along the center foliation of $f$.
We say that $\mu$ is \emph{$u$-invariant} if there is a full $\nu$-measure set $\Delta \subset M_c$
such that for any $x,y\in M$ with $x_c,y_c\in \Delta$ belonging to the same unstable leaf,
the unstable holonomy map takes $\mu^c_x$ to $\mu^c_y$.
According to \cite[Proposition 5.4]{TY}, this turns out to be the same as the previous notion:

\begin{proposition}\label{p.iff}
Let $f\in\SPH1$.
Then an $f$-invariant probability measure $\mu$ is $u$-invariant if and only if it is a $\nu$-Gibbs $u$-state.
\end{proposition}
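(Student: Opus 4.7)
The plan is to work locally in each Markov box $B_i = \pi_c^{-1}(\tB_i)$ and exploit the topological product structure provided by dynamical coherence. Two facts drive the argument: (i) by Lemma~\ref{l.csjac_Markov_finiteness}(a), the center-stable holonomy inside $B_i$ takes $\nu^u_x$ to $\nu^u_y$, and (ii) the unstable holonomy between two center fibers over points $y_c,z_c$ in a common local unstable plaque $\txi(y_c)$ is conjugate, via $\pi_c$, to the identity on $\txi(y_c)$, since unstable leaves are subfoliations of center-unstable leaves and respect the projection $\pi_c$. Throughout both implications I tacitly use that any measure on either side of the equivalence has projection $(\pi_c)_*\mu=\nu$: for the Gibbs side this is immediate from the definition, while on the $u$-invariance side it follows from the fact that $f_c$ is conjugate to a linear Anosov diffeomorphism of $\TT^2$ (Remark~\ref{r.baseconjugate}) and thus admits a unique unstable-holonomy-invariant probability measure, which must equal $\nu$.

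For the implication ``$\nu$-Gibbs $u$-state implies $u$-invariant'', I would take the Rokhlin disintegration of $\mu|_{B_i}$ along $\xi$: by the Gibbs hypothesis, the conditionals are $\nu^u_w$. Identifying the quotient space of $\xi|_{B_i}$ with a local center-stable plaque $\cF^{cs}_{\loc}(x_0)$ via the center-stable holonomies gives a quotient measure $\bar\mu$, and comparing its $\pi_c$-push-forward with the analogous decomposition of $\nu|_{\tB_i}$ along $\txi$ shows that $\bar\mu$ projects onto the quotient of $\nu|_{\tB_i}$ in the local stable plaque $\cW^s_{\loc}(x_c^i)$. Disintegrating $\bar\mu$ further along the center fibers sitting inside $\cF^{cs}_{\loc}(x_0)$ and invoking Fubini produces the center disintegration $\{\mu^c_y\}$ of $\mu|_{B_i}$, and reveals that $\mu^c_y$ depends only on the stable coordinate of $\pi_c(y)$. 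Since the unstable holonomy between center fibers over $\pi_c$-related $\txi$-plaques preserves that stable coordinate, this is precisely $u$-invariance.

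For the converse, I would reverse the argument. With $(\pi_c)_*\mu=\nu$ in hand, write $\mu|_{B_i}$ through its center disintegration; by $u$-invariance, $\mu^c_y$ depends only on the stable coordinate of $\pi_c(y)$ on a set of full $\nu$-measure. Combining this with the local product structure of $\tB_i$ and with the $cs$-holonomy invariance of $\nu^u_{x_c}$, a Fubini argument then identifies the $\xi$-disintegration of $\mu|_{B_i}$ as the pull-back by $\pi_c|_{\xi(x)}$ of the conditional $\nu^u_{x_c}$, which is by definition $\nu^u_x$. Hence $\mu$ is a $\nu$-Gibbs $u$-state.

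The principal technical obstacle is that the local product coordinates in $B_i$ are only $C^0$, so the Fubini steps must be phrased purely in measure-theoretic terms through the Rokhlin disintegrations of $\mu$ along the two transverse measurable partitions (by $\xi$-plaques and by center fibers), using Lemma~\ref{l.csjac_Markov_finiteness}(a) to interchange them --- an argument pattern analogous to the standard handling of Gibbs $u$-states in \cite[Chapter 11]{BDV}. A secondary subtlety is the justification that $u$-invariance forces $(\pi_c)_*\mu=\nu$ via uniqueness of the $u$-holonomy invariant probability on the linear Anosov base; passing from the local statements inside each $B_i$ to a global statement on $M$ is immediate, since the Markov partition satisfies $\nu(\bigcup_i\partial\tB_i)=0$.
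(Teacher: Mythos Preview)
The paper does not prove this proposition itself; it simply cites \cite[Proposition~5.4]{TY}. Your local Fubini-type argument in the Markov boxes, interchanging the Rokhlin disintegrations along $\xi$ and along the center fibers and using Lemma~\ref{l.csjac_Markov_finiteness}(a), is exactly the natural direct approach, and both implications go through as you sketch once $(\pi_c)_*\mu=\nu$ is known.

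The one genuine gap is your justification of $(\pi_c)_*\mu=\nu$ on the $u$-invariance side. You claim this follows because $f_c$ ``admits a unique unstable-holonomy-invariant probability measure''. But $u$-invariance is purely a statement about how the center conditionals $\mu^c_x$ transform under unstable holonomy; it imposes no direct constraint on the quotient measure $(\pi_c)_*\mu$. Indeed, if $f$ is rotation type with maximal measure $\hat\mu$ (whose center disintegration $\{\hat\mu^c_x\}$ is continuous and $u$-holonomy invariant, cf.\ Remark~\ref{r.complement}), and $p_c$ is a fixed point of $f_c$, then the $f$-invariant measure $\hat\mu^c_{p_c}$ supported on a single center fiber admits a version of its center disintegration (namely $\mu^c_x:=\hat\mu^c_x$ everywhere) that satisfies the $u$-invariance condition on all of $M_c$, yet projects to $\delta_{p_c}\neq\nu$. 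The real point is that the paper's definition of $u$-invariance is phrased in terms of a full $\nu$-measure set $\Delta\subset M_c$, which tacitly presupposes that the center disintegration is meaningfully determined $\nu$-a.e.; in the cited \cite{TY} and in every application in this paper the measures under consideration lie in $\MM(f)$, so that $(\pi_c)_*\mu=\nu$ is a standing hypothesis rather than a consequence. You should simply take $(\pi_c)_*\mu=\nu$ as given (either as part of the definition or as an explicit hypothesis, matching \cite{TY}) rather than attempt to derive it; with that in hand, your Fubini argument for the converse is correct.
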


The following fact was proven in \cite[Corollary~4.3]{AV}, see also  \cite[Corollary 2.3]{TY}:

\begin{proposition}\label{p.AVuinvariant}
Let $f\in\SPH1$.
If $\mu$ is an ergodic $f$-invariant probability measure of $f\in \SPH1$ with non-positive center Lyapunov exponent
then it is $u$-invariant.
\end{proposition}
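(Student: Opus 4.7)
The plan is to invoke the Avila--Viana invariance principle~\cite{AV} (see also \cite{TY}), of which the statement is essentially a specialization. View $f$ as a circle bundle extension of the topological Anosov map $f_c$ through the projection $\pi_c$; then the action of $f$ on the center fibers defines a one-dimensional cocycle $F$ over $f_c$ whose fiberwise Lyapunov exponent with respect to $\mu$ equals the center exponent of $\mu$, and is therefore non-positive by assumption.

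Next, I would consider the disintegration $\{\mu^c_x\}$ of $\mu$ along the center foliation as a measurable $F$-invariant section into the space of probability measures on the circle fibers. The invariance principle guarantees that such a section is $u$-invariant whenever the fiberwise exponent is non-positive: on a full measure subset of base points, the unstable holonomies preserve the section. Since the unstable holonomies of $f_c$ lift, via dynamical coherence and the circle bundle structure of $\mathcal{F}^c$, to continuous holonomies between center circles of $f$, this is exactly the notion of $u$-invariance defined in this section, once one identifies the relevant full-measure subset inside $M_c$ with a full $\nu$-measure subset (using that $(\pi_c)_*\mu$ is absolutely continuous along unstables, or directly equals $\nu$ in the cases of interest).

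The main obstacle is the non-strict inequality $\lambda^c \leq 0$: the classical Ledrappier formulation of the invariance principle requires $\lambda^c = 0$, so one needs the stronger form of Avila--Viana to cover the case $\lambda^c < 0$. In that case Pesin theory along $E^{cs}$ provides stable manifolds for $\mu$-almost every point, and the argument of \cite[Corollary~4.3]{AV} combines these with distortion estimates along unstables (crucially using the $C^2$ regularity of $f$ and the finiteness of the Jacobian factors from Lemma~\ref{l.csjac_Markov_finiteness}) to show that the pushforwards $f^n_*\mu^c_{f^{-n}x}$ stabilize along unstable plaques, yielding the desired holonomy invariance. This is the technical core of the argument and the step where the invariance principle enters in full strength.
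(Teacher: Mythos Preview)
Your proposal is correct and matches the paper's treatment: the paper does not give an independent proof of this proposition but simply records it as a citation of \cite[Corollary~4.3]{AV} and \cite[Corollary~2.3]{TY}, which is exactly the Avila--Viana invariance principle you invoke, applied to $f$ viewed as a smooth circle cocycle over $f_c$. Your sketch supplies more detail than the paper does, but the route is the same.
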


There is a notion dual to $u$-invariance, called \emph{$s$-invariance}, where one requires the
disintegration of $\mu$ along the center foliation to be invariant under stable holonomies instead,
at almost every point. The following fact was proven in \cite[Section~5]{HHTU}, based on the
invariance principle of \cite{AV}; similar ideas appear in \cite[Section~5.2]{VY}.

\begin{proposition}\label{p.rigidityofsuinvariant}
Let $f\in \SPH1$. If there exists some $\mu\in \MM(f)$ which is $u$-invariant, $s$-invariant and ergodic
then $f$ is rotation type and $\MM(f)=\{\mu\}$.
\end{proposition}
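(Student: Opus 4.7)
The plan is to show that the ergodic measure $\mu$ must have vanishing center Lyapunov exponent; Proposition~\ref{p.finiteness} then rules out case~(b) and forces case~(a), giving both conclusions at once ($f$ is rotation type and $\MM(f)=\{\mu\}$). The route follows the template of \cite[Section~5]{HHTU} together with the invariance principle of \cite{AV}.

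First I would apply the Avila--Viana invariance principle to the doubly invariant disintegration. Because $\mu$ is ergodic and simultaneously $u$-invariant and $s$-invariant, the measurable family $\{\mu^c_x\}$ along center leaves can be upgraded to a version that depends continuously on $x\in M$ and is preserved by every local stable and every local unstable holonomy between center circles. This step is the main obstacle: the input is only almost-everywhere invariance under two a priori unrelated holonomy pseudogroups, and one has to bootstrap this into a single everywhere-defined, jointly invariant, continuous family. The machinery that achieves this is exactly the form of the invariance principle adapted to partially hyperbolic systems in \cite{AV,HHTU}.

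Next I would exploit accessibility. Since $f\in\SPH1$, any two points $x,y\in M$ are joined by an $su$-path, and composing the corresponding stable and unstable holonomies provided by the previous step yields a homeomorphism $\cF^c(x)\to\cF^c(y)$ that pushes $\mu^c_x$ onto $\mu^c_y$. Combined with continuity and with the $f$-equivariance $f_*\mu^c_x=\mu^c_{f(x)}$, this produces a continuously varying family of identifications of the center circles with a fixed model $S^1$ that simultaneously conjugates $f\vert_{\cF^c(x)}$, on every leaf, to one and the same circle homeomorphism preserving a common probability measure. In particular, $\supp\mu^c_x=\cF^c(x)$ for every $x$, and the center action of $f$ is equicontinuous.

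Equicontinuity of the center cocycle forces $\lambda^c(\mu)=0$: through the trivializations above the cocycle $\log\|df\vert_{E^c}\|$ becomes continuously cohomologous to $\log$ of the derivative of a circle homeomorphism, whose Birkhoff averages vanish by boundedness of the telescoped sums. Hence $\mu$ is an ergodic maximal measure with zero center exponent, which is incompatible with case~(b) of Proposition~\ref{p.finiteness}; therefore case~(a) applies, giving that $f$ is rotation type and $\MM(f)=\{\mu\}$.
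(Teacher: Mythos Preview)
Your outline follows exactly the route the paper itself defers to: the paper gives no self-contained proof of this proposition, but attributes it to \cite[Section~5]{HHTU} via the invariance principle of \cite{AV}, and your steps (upgrade the $su$-invariant disintegration to a continuous one, then use accessibility to globalize it into a rotation-extension structure) are precisely that argument.

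The one place to tighten is your justification of $\lambda^c(\mu)=0$. The trivializations you build from the distribution functions of the $\mu^c_x$ are only $C^0$, so the phrase ``continuously cohomologous to $\log$ of the derivative of a circle homeomorphism'' is not literally meaningful, and the telescoping argument needs the conditional measures to have (log-integrable) densities, which you have not established. The standard fix uses what you already have: continuity of $x\mapsto\mu^c_x$, $su$-invariance, and accessibility force every $\mu^c_x$ to be non-atomic (the set of atoms of maximal mass would otherwise be a nonempty proper $su$-saturated subset of $M$). Then argue by contradiction: if $\lambda^c(\mu)<0$, Oseledets gives, for $\mu$-a.e.\ $x$, arcs $I\subset\cF^c(x)$ with $\operatorname{length}(f^n(I))\to 0$ while $\mu^c_{f^n(x)}(f^n(I))=\mu^c_x(I)>0$ stays constant; passing to a convergent subsequence and using continuity of the disintegration produces an atom, a contradiction. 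Symmetrically $\lambda^c(\mu)\le 0$, so $\lambda^c(\mu)=0$, and then your appeal to Proposition~\ref{p.finiteness} concludes as you wrote.
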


As before, let $\mu$ be an invariant probability measure for $f\in \SPH1$.
The \emph{ partial entropy of $\mu$ along the unstable foliation $\cF^u$} is defined by
$$
h_\mu(f,\cF^u)=H_\mu(f^{-1}\xi\mid \xi).
$$
Theorem A in \cite{TY} provides the following very useful criterion for an invariant measure
to be a $\nu$-Gibbs $u$-state:

\begin{proposition}\label{p.uinvariant1}
$h_\mu(f,\cF^u)\leq h_\nu(f_c,\cW^u)$ for any $f$-invariant probability measure $\mu\in\MM(f)$.
Moreover, the equality holds if and only if $\mu$ is a $\nu$-Gibbs $u$-state.
\end{proposition}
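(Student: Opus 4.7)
My plan is to reduce the partial entropy inequality to a conditional entropy computation relative to the factor map $\pi_c\colon(M,\mu,f)\to(M_c,\nu,f_c)$, and then treat the two directions of the equality claim separately.

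For the inequality, introduce $\hat\xi:=\pi_c^{-1}(\txi)$. Since $\mu\in\MM(f)$ gives $(\pi_c)_*\mu=\nu$ by \eqref{eq.preservesboth}, one has
\[
H_\mu(f^{-1}\hat\xi\mid\hat\xi)=H_\nu(f_c^{-1}\txi\mid\txi)=h_\nu(f_c,\cW^u).
\]
Moreover $\pi_c$ sends each atom $\xi(x)$ bijectively onto $\txi(\pi_c(x))\subset\hat\xi(x)$, so $\xi$ refines $\hat\xi$. The key observation is
\[
H_\mu(f^{-1}\xi\mid\xi\vee f^{-1}\hat\xi)=0,
\]
which holds because once we know $\xi(x)$ (hence $\cF^u(f(x))$) and the center-cylinder $\hat\xi(f(x))$ sitting inside its Markov block $B_j$, the intersection $\cF^u(f(x))\cap\hat\xi(f(x))$ is exactly the single local unstable plaque $\xi(f(x))$, because $\pi_c$ is a homeomorphism from an unstable leaf onto its projection inside $B_j$. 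Combining the chain rule with the monotonicity of conditional entropy under refinement yields
\[
h_\mu(f,\cF^u)=H_\mu(f^{-1}\xi\mid\xi)=H_\mu(f^{-1}\hat\xi\mid\xi)\le H_\mu(f^{-1}\hat\xi\mid\hat\xi)=h_\nu(f_c,\cW^u).
\]

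The ``if'' direction of the equality is a direct computation: assuming $\mu^\xi_x=\nu^u_x$ $\mu$-a.e., Rokhlin's formula gives $h_\mu(f,\cF^u)=-\int\log\mu^\xi_{f^{-1}(x)}(f^{-1}\xi(x))\,d\mu(x)$, which under the hypothesis becomes the integral of $\log\nu^u_{f^{-1}(x)}(f^{-1}\xi(x))$. By Lemma~\ref{l.csjac_Markov_finiteness}(b,c) this factor depends only on $\pi_c(x)$ and coincides with the analogous factor for $f_c$, so pushing down by $(\pi_c)_*\mu=\nu$ recovers $h_\nu(f_c,\cW^u)$. The main obstacle is the ``only if'' direction. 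Equality forces $f^{-1}\hat\xi$ and $\xi$ to be conditionally independent given $\hat\xi$, and since both partial entropies scale linearly under iteration ($h_\mu(f^n,\cF^u)=n\,h_\mu(f,\cF^u)$ and likewise for $f_c$), the same argument applied to each power $f^n$ extends this to conditional independence of $f^{-n}\hat\xi$ and $\xi$ given $\hat\xi$, for every $n\ge 1$. Passing to the increasing limit $\hat\xi_\infty=\bigvee_{n\ge 0}f^{-n}\hat\xi$, which by the Markov structure of $\txi$ and the hyperbolicity of $f_c$ has atoms equal to single center fibers of $M$, one concludes that $\xi$ and $\hat\xi_\infty$ are conditionally independent given $\hat\xi$. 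Combining this product-structure rigidity with the center-stable holonomy invariance of the canonical family $\{\nu^u_y\}$ in Lemma~\ref{l.csjac_Markov_finiteness}(a) and the identity $(\pi_c)_*\mu^{\hat\xi}_x=\nu^u_{x_c}$, one forces the disintegration $\mu^\xi_x$ to coincide with $\nu^u_x$ $\mu$-a.e.; translating conditional independence of partitions into pointwise equality of the conditional measures is the delicate step, and is where the circle-bundle structure of $\cF^c$ is essentially used.
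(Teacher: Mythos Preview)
The paper does not give its own proof of this proposition: it is simply quoted as Theorem~A of \cite{TY} (Tahzibi--Yang), so there is no argument in the paper to compare yours against.

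Your approach is correct and is in fact the natural one. The inequality via the intermediate partition $\hat\xi=\pi_c^{-1}(\txi)$ is clean: the identity $H_\mu(f^{-1}\xi\mid\xi\vee f^{-1}\hat\xi)=0$ is exactly the statement that an unstable plaque meets each center cylinder in a single plaque, and the chain rule plus monotonicity finish it. The ``if'' direction is a straightforward Rokhlin computation, as you say.

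For the ``only if'' direction your outline is right, and the final step is less delicate than you suggest. Once you have that $\xi$ and $\hat\xi_\infty$ (the partition into center fibers) are conditionally independent given $\hat\xi$, note that within a $\hat\xi$-atom the joint partition $\xi\vee\hat\xi_\infty$ is the partition into points, so $\mu^{\hat\xi}_x$ is a product measure in the coordinates (unstable plaque, base point in $\txi(x_c)$). Its marginal on the second factor is $(\pi_c)_*\mu^{\hat\xi}_x=\nu^u_{x_c}$, and by independence every conditional $\mu^\xi_y$ pushes forward under $\pi_c$ to this same marginal; by the very definition of $\nu^u_y$ that gives $\mu^\xi_y=\nu^u_y$. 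You do not actually need the $cs$-holonomy invariance from Lemma~\ref{l.csjac_Markov_finiteness}(a) here --- the conditional independence already does all the work --- though invoking it is harmless.
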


The following upper semi-continuity property of partial entropy, proven in \cite[Theorem~D]{Y},
is important to understand the dependence of $\nu$-Gibbs $u$-states with respect
to the diffeomorphism:

\begin{proposition}\label{p.continuityofpartialentropy}
Let $f_n$ be a sequence of diffeomorphisms in $\SPH1$ converging to some
$f$ in the $C^1$ topology, and $\mu_n$ be $f_n$-invariant probability measures converging,
in the weak$^*$ topology, to some $f$-invariant probability measure $\mu$. Then
$$
\limsup_n h_{\mu_n}(f_n, \cF^u_n) \leq h_{\mu}(f,\cF^u)
$$
where $\cF_n^u$ is the unstable foliation of $f_n$.
\end{proposition}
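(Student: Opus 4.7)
The plan is to express the partial entropy as a conditional entropy along a measurable partition built from a common Markov structure on the base map, and then to pass to the limit by weak$^*$ continuity applied to finite-partition entropies with $\mu$-null boundaries.

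Since $f_c$ and every $(f_n)_c$ are topologically conjugate to a single linear Anosov model $A$ on $M_c$ (Remark~\ref{r.baseconjugate}) via conjugacies $h,h_n$ that vary continuously in the $C^0$ topology with the diffeomorphism, I would fix a Markov partition of $A$ with Lebesgue-area-null boundary and pull it back via $h_n$ and $h$ to Markov partitions $\{\tB^n_i\}$ and $\{\tB_i\}$ of $(f_n)_c$ and $f_c$ respectively. A small translation of the Markov model ensures that $\mu\bigl(\pi_c^{-1}(\partial\tB_i)\bigr)=0$. Set $B^n_i=\pi_c^{-1}(\tB^n_i)$ and $B_i=\pi_c^{-1}(\tB_i)$, and let $\xi_n,\xi$ denote the measurable partitions of $M$ into local unstable leaves of $f_n,f$ inside these boxes, so that $h_{\mu_n}(f_n,\cF^u_n)=H_{\mu_n}(f_n^{-1}\xi_n\mid\xi_n)$ by definition, and similarly for $\mu$.

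Next, set $\zeta=\{B_i\}$, $\zeta_n=\{B_i^n\}$ and their Markov refinements $\zeta^{(N)}=\bigvee_{j=0}^{N-1}f^{-j}\zeta$, $\zeta_n^{(N)}=\bigvee_{j=0}^{N-1}f_n^{-j}\zeta_n$. The Markov property on the base implies that, modulo $\mu$-null sets, the partitions $f^{-N+1}\xi$ and $\zeta^{(N)}\vee\xi$ coincide: within an atom of $\xi$, both refine according to the forward itinerary of length $N-1$ in the Markov partition. Combined with $f$-invariance of $\mu$ and the chain rule, this yields the key identity
$$
H_\mu(\zeta^{(N)}\mid\xi)=H_\mu(f^{-N+1}\xi\mid\xi)=(N-1)\,H_\mu(f^{-1}\xi\mid\xi),
$$
and analogously for $\mu_n$. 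It therefore suffices to prove $\limsup_n H_{\mu_n}(\zeta_n^{(N)}\mid\xi_n)\leq H_\mu(\zeta^{(N)}\mid\xi)$ for every fixed $N$, and divide by $N-1$. The boundary of every atom of $\zeta^{(N)}$ lies in finitely many $f^{-j}$-images of the boundary plaques of the $B_i$, hence is $\mu$-null by the first-step choice. The $C^1$-convergence $f_n\to f$ and the corresponding $C^0$-convergence of the unstable bundle make the atoms of $\zeta^{(N)}_n$ and $\xi_n$ approximate those of $\zeta^{(N)}$ and $\xi$ in the symmetric-difference pseudometric induced by $\mu$; expressing the conditional entropies as integrals over the $\xi_n$- and $\xi$-disintegrations and using continuous dependence of the center-stable holonomies (Lemma~\ref{l.csjac_Markov_finiteness}) to align the conditional measures across $n$, standard weak$^*$ continuity of finite-partition entropy then gives $\lim_n H_{\mu_n}(\zeta^{(N)}_n\mid\xi_n)=H_\mu(\zeta^{(N)}\mid\xi)$.

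The main obstacle is that the partitions $\xi_n$ depend on $n$ through the unstable foliation, so standard weak$^*$ upper semi-continuity of conditional entropy for a fixed partition cannot be invoked directly. The resolution is to anchor everything in a common Markov structure on the base, relegating the $n$-dependence to the unstable-leaf geometry inside each Markov box, and to exploit the $C^0$-continuous dependence of that geometry, and hence of the relevant conditional measures, on the diffeomorphism.
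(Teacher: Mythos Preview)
The paper does not prove this proposition; it is quoted directly from \cite[Theorem~D]{Y}, so there is no in-paper argument to compare against. Your reduction identity $H_\mu(\zeta^{(N)}\mid\xi)=(N-1)\,h_\mu(f,\cF^u)$, obtained from the Markov property on the base, is correct. The genuine gap is in the last step, where you assert $\lim_n H_{\mu_n}(\zeta_n^{(N)}\mid\xi_n)=H_\mu(\zeta^{(N)}\mid\xi)$.

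The partitions $\xi_n$ and $\xi$ are \emph{uncountable} measurable partitions into local unstable plaques, so ``standard weak$^*$ continuity of finite-partition entropy'' does not apply on the conditioning side. Your proposed remedy, invoking Lemma~\ref{l.csjac_Markov_finiteness} to ``align the conditional measures across $n$'', misreads that lemma: it concerns the \emph{reference} measures $\nu^u_x$, not the conditional measures $\mu^u_x$ of a general invariant $\mu$. The proposition is stated for arbitrary invariant $\mu_n,\mu$, with no Gibbs-type hypothesis; for such measures the disintegration along $\xi$ enjoys no holonomy invariance, and weak$^*$ convergence $\mu_n\to\mu$ gives no control whatsoever on the convergence of these disintegrations along the moving partitions $\xi_n$. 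Nor can you sidestep this by approximating $\xi$ with $\bigvee_{j=0}^{m} f^j\zeta$: since $\zeta$ is pulled back from the base $M_c$, that join converges only to $\pi_c^{-1}(\txi)$, the partition into local \emph{center-unstable} plaques, which is strictly coarser than $\xi$. The center direction---and with it the real difficulty of the result---is precisely what this approximation misses. The argument in \cite{Y} proceeds quite differently and does not attempt to track disintegrations of $\mu_n$ along unstable plaques.
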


\subsection{Space $\nu$-Gibbs $u$-states}

The next two propositions contain useful properties of the space $\Gibb^u_\nu(f)$
of $\nu$-Gibbs $u$-states of $f$.

\begin{proposition}\label{p.Gibbsustates}~
\begin{itemize}
\item[(a)] $\Gibb^u_\nu(f)$ is a non-empty compact subset of $\MM(f)$;
\item[(b)] if $\mu \in \Gibb^u_{\nu}(f)$ then almost every ergodic component of $\mu$ is also a $\nu$-Gibbs $u$-state;
\item[(c)] $\Gibb^u_{\nu}(f)$ varies upper semi-continuously with respect to the diffeomorphism $f$ in the $C^1$ topology.
\item[(d)] the support of every $\mu\in\Gibb^u_{\nu}(f)$ is $u$-saturated.
\end{itemize}
\end{proposition}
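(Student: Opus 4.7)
The plan is to reduce all four claims to the variational characterization from Proposition~\ref{p.uinvariant1}, which identifies $\Gibb^u_\nu(f)$ as the subset of $\MM(f)$ on which the partial entropy along $\cF^u$ attains its maximum value $h_\nu(f_c,\cW^u)$, combined with the upper semi-continuity of partial entropy from Proposition~\ref{p.continuityofpartialentropy}.

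For part (a), non-emptiness follows by combining Proposition~\ref{p.finiteness} (which produces an ergodic maximal measure with non-positive center exponent) with Propositions~\ref{p.AVuinvariant} and~\ref{p.iff}, which together promote such a measure to a $\nu$-Gibbs $u$-state; compactness follows because $\MM(f)$ is weak$^*$ closed (upper semi-continuity of the metric entropy in $\mu$) and the subset where $h_\mu(f,\cF^u)\ge h_\nu(f_c,\cW^u)$ is closed by Proposition~\ref{p.continuityofpartialentropy} applied to the constant sequence $f_n\equiv f$. For part (b), I would use that both the metric entropy and the partial entropy along $\cF^u$ are affine under the ergodic decomposition $\mu=\int\mu_\omega\,d\omega$: since $\mu\in\MM(f)$ by Lemma~\ref{l.MM}, almost every $\mu_\omega$ lies in $\MM(f)$, and Proposition~\ref{p.uinvariant1} bounds each $h_{\mu_\omega}(f,\cF^u)\le h_\nu(f_c,\cW^u)$; because the integral of these partial entropies equals $h_\mu(f,\cF^u)=h_\nu(f_c,\cW^u)$, equality must hold $\omega$-almost everywhere, which by Proposition~\ref{p.uinvariant1} again means $\mu_\omega\in\Gibb^u_\nu(f)$ for a.e.\ $\omega$.

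For part (c), take $f_n\to f$ in the $C^1$ topology and $\mu_n\in\Gibb^u_\nu(f_n)$ converging weakly to $\mu$. Upper semi-continuity of metric entropy in $(f,\mu)$ yields $h_\mu(f)\ge\limsup h_{\mu_n}(f_n)=h_{\mathrm{top}}(f)$, where the last equality uses that $C^1$-close maps induce the same action on $H_1(\TT^2)$, so by Remark~\ref{r.baseconjugate} the base maps $(f_n)_c$ are conjugate to the same linear Anosov and $h_{\mathrm{top}}(f_n)=h_{\mathrm{top}}((f_n)_c)$ is locally constant. Hence $\mu\in\MM(f)$, and Proposition~\ref{p.continuityofpartialentropy} combined with the identity $h_{\mu_n}(f_n,\cF^u_n)=h_{\nu}((f_n)_c,\cW^u_n)=h_{\mathrm{top}}(f_n)$ gives $h_\mu(f,\cF^u)\ge h_\nu(f_c,\cW^u)$; the reverse inequality from Proposition~\ref{p.uinvariant1} then forces $\mu\in\Gibb^u_\nu(f)$. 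For part (d), the conditional measures $\mu^u_y=\nu^u_y$ have full topological support on every plaque $\xi(y)$, since $\nu^u_y$ corresponds to normalized Lebesgue length after conjugating $f_c$ to its linear model; hence $\xi(y)\subset\supp\mu$ for $\mu$-a.e.\ $y$, and taking closures extends this to every $y\in\supp\mu$. Because $\supp\mu$ is $f$-invariant and $\cF^u(y)=\bigcup_{n\ge 0}f^n(\xi(y))$, the whole unstable leaf through any point of $\supp\mu$ lies in $\supp\mu$.

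The subtlest point I anticipate is in part (c): one must recognize that $h_\nu(f_c,\cW^u)$ is not merely bounded above but is locally \emph{constant} in $f$, which hinges on the topological rigidity of the base dynamics provided by Hiraide's theorem (Remark~\ref{r.baseconjugate}) together with the standard identity $h_{\mathrm{top}}(f_c)=h_\nu(f_c)=h_\nu(f_c,\cW^u)$ for a topologically Anosov homeomorphism.
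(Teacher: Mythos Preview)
Your approach is essentially the same as the paper's. The paper delegates compactness in (a) and all of (b) to \cite[Proposition~6.2]{TY}, whereas you supply direct arguments via the variational characterization in Proposition~\ref{p.uinvariant1} and affinity of partial entropy under the ergodic decomposition; both routes are sound, and yours is more self-contained. For (c) the paper invokes leaf conjugacy from \cite{H,HP} to identify the base dynamics, but the content matches your $H_1$ argument combined with Remark~\ref{r.baseconjugate}.

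One small slip in (d): the identity $\cF^u(y)=\bigcup_{n\ge 0}f^n(\xi(y))$ is false as written, since $f^n(\xi(y))$ lies on the unstable leaf through $f^n(y)$, not through $y$. What you want is $\cF^u(y)=\bigcup_{n\ge 0}f^n\big(\xi(f^{-n}(y))\big)$; together with the $f$-invariance of $\supp\mu$ and the inclusion $\xi(z)\subset\supp\mu$ for every $z\in\supp\mu$ (which you correctly obtain by closure from the almost-everywhere statement), this gives the $u$-saturation.
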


\begin{proof}
First, we prove claim (a).
By Proposition~\ref{p.finiteness}, there exists some ergodic maximal measure $\mu$ with non-positive
center exponent. By Proposition~\ref{p.AVuinvariant}, such a $\mu$ is a $u$-invariant probability measure.
So, by Proposition~\ref{p.iff}, it is a $\nu$-Gibbs $u$-state.
This proves that $\Gibb^u_\nu(f)$ is non-empty.
The fact that $\Gibb^u_{\nu}(f)$ is compact was proven in \cite[Proposition 6.2]{TY}.
We have already seen in Lemma~\ref{l.MM} that $\Gibb^u_\nu(f)\subset \MM(f)$.

Claim (b) is contained in \cite[Proposition 6.2]{TY}.

Next we prove claim (c).
Let $f_n\in \SPH1$ be a sequence converging to some $f\in \SPH1$ in the
$C^1$ topology, and $\mu_n\in \Gibb^u_\nu(f_n)$ converge to some ($f$-invariant) probability measure $\mu$ in the weak$^*$ topology.
We need to show that $\mu\in \Gibb^u_\nu(f)$.

Let $\cF_n^c$ denote the center foliation of $f_n$, and $f_{n,c}$ be the map induced by $f$ on the quotient space $M/\cF_n^c$.
Observe that $f$ and $f_n$ are leaf conjugate, for every large $n$, since each is leaf conjugate to its algebraic
part (see \cite[Theorem~1.1]{H} and \cite[Theorem~1.6]{HP}), and the algebraic parts coincide.
Thus, $f_c$ and $f_{n,c}$ are topologically conjugate (this also follows from Remark~\ref{r.baseconjugate}),
and so $h_{top}(f_c)=h_{top}(f_{n,c})$ for every large $n$.
Using \eqref{eq.preservesboth} and Lemma~\ref{l.MM}, it follows that
$$
h_{\mu_n}(f_n)=h_{top}(f_n) = h_{top}(f_{n,c}) = h_{top}(f_c) = h_{top}(f)
$$
for every large $n$. Since partially hyperbolic diffeomorphisms with 1-dimensional center are away from homoclinic
tangencies, \cite[Corollary~C]{LVY} gives that their metric entropies vary upper semi-continuously in the $C^1$ topology.
Thus,
$$
h_\mu(f) \ge \limsup_{n} h_{\mu_n}(f_n)=h_{top}(f),
$$
which means that $\mu\in \MM(f)$.

Let $\nu_n$ denote the (unique) maximal measure for $f_{n,c}$ (recall Remark~\ref{r.complement}).
As observed previously, $f_c$ and $f_{n,c}$ are conjugate by some homeomorphism $g_n$. Thus
$$
h_{(g_n)_*(\nu)}(f_{n,c}) = h_{\nu}(f_c) = h_{top}(f_c) = h_{top}(f_{n,c}),
$$
and so $(g_n)_*(\nu)=\nu_n$. Moreover, observing that $g_n$ maps $\cW^u$ to $\cW^u_n$,
$$
h_{\nu_n}(f_{n,c},\cW^u_n) = h_{\nu}(f_c,\cW^u).
$$
Since $\mu_n\in \Gibb^u_\nu(f_n)$, Proposition~\ref{p.uinvariant1} gives that
$$
h_{\mu_n}(f_n,\cF^u_n) = h_{\nu_n}(f_{n,c},\cW^u_n) = h_{\nu}(f_c,\cW^u).
$$
On the other hand, Proposition~\ref{p.continuityofpartialentropy} gives that
$$
\limsup_n h_{\mu_n}(f_n,\cF^u_n) \leq h_\mu(f,\cF^u).
$$
These two inequalities imply that $h_\mu(f,\cF^u)\geq h_\nu(f_c,\cW^u)$.
Using Proposition~\ref{p.uinvariant1}, we conclude that $h_\mu(f,\cF^u) = h_\nu(f_c,\cW^u)$ and
$\mu\in \Gibb^u_\nu(f)$.

To prove claim (d), it suffices to show that $\xi(x)\subset \supp\mu$ for $\mu$-almost every $x$.
By the definition of $\nu$-Gibbs $u$-state, $(\pi_c)_*(\mu^u_x)=\nu^u_{x_c}$ for every $x$.
Note that $\nu^u_{x_c}$ is supported on the whole $\txi(x_c)$, as it corresponds (in the sense of
Remark~\ref{r.baseconjugate}) to the normalized Lebesgue measure on $\txi(x_c)$.
Thus $\supp \mu^u_x=\xi(x)$. Moreover, for $\mu$-almost every point $x$, $\mu^u_x$-almost every point
is a regular point of $\mu$, hence $\xi(x)=\supp(\mu^u_x)\subset \supp\mu$.
\end{proof}

\begin{proposition}\label{p.nupositivesetinleaf}
For every $x\in M$, any weak$^*$ limit of the sequence of probability measures
$$
\frac{1}{n}\sum_{j=0}^{n-1} (f^j)_* (\nu^u_x)
$$
is a $\nu$-Gibbs $u$-state.
\end{proposition}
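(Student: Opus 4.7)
The plan is to proceed in two stages: first verify that any weak$^*$ accumulation $\mu$ of $\mu_n:=\frac{1}{n}\sum_{j=0}^{n-1}(f^j)_*\nu^u_x$ lies in $\MM(f)$, and then identify the $\xi$-disintegration of $\mu$ as $\{\nu^u_y\}$, which places $\mu$ in $\Gibb^u_\nu(f)$. For the first stage I would project to the base: $(\pi_c)_*\mu_n = \frac{1}{n}\sum_{j=0}^{n-1}(f_c^j)_*\nu^u_{x_c}$, and Proposition~\ref{p.limit} applied with $\Delta=\txi(x_c)$ (so $\nu^u_{x_c}(\Delta)=1$) gives $(\pi_c)_*\mu=\nu$, which via \eqref{eq.preservesboth} yields $h_\mu(f)=h_\nu(f_c)=h_{top}(f)$.

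For the second stage I would first make explicit the $\xi$-structure of the iterates. Since $\xi$ is $f$-increasing, every atom $\xi(y)$ contained in $f^j(\xi(x))$ satisfies $f^{-j}(\xi(y))\subset \xi(x)$. Iterating Lemma~\ref{l.csjac_Markov_finiteness}(b) one obtains
$$
(f^j)_*\bigl(\nu^u_x|_{f^{-j}(\xi(y))}\bigr) \;=\; \kappa_j(y)\,\nu^u_y, \qquad \kappa_j(y):=\nu^u_x\bigl(f^{-j}(\xi(y))\bigr),
$$
so the $\xi$-conditional probability of $(f^j)_*\nu^u_x$ at $y$ is exactly $\nu^u_y$. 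Consequently, for any continuous $\phi\colon M\to\RR$, setting $\Phi(y):=\int_{\xi(y)}\phi\,d\nu^u_y$ (which is constant on $\xi$-atoms) gives
$$
\int\phi\,d(f^j_*\nu^u_x)\;=\;\sum_y\kappa_j(y)\Phi(y)\;=\;\int\Phi\,d(f^j_*\nu^u_x),
$$
and averaging over $j$ yields $\int\phi\,d\mu_n=\int\Phi\,d\mu_n$.

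It remains to pass to the limit, and here the canonical choice of disintegration made right after Lemma~\ref{l.sjac} is crucial: it guarantees that $y\mapsto\nu^u_y$ depends continuously on $y$ within each Markov set $B_i$, so $\Phi$ is continuous whenever $\phi$ is continuous with support in the interior of some $B_i$. For such $\phi$, weak$^*$ convergence yields $\int\phi\,d\mu=\int\Phi\,d\mu$, which says precisely that the $\xi$-conditional of $\mu$ at $y$ equals $\nu^u_y$ for $\mu$-a.e.\ $y$. A partition-of-unity argument then extends this identity to arbitrary continuous $\phi$; the Markov boundaries $\partial B_i$ cause no difficulty because they are $\nu$-null and hence, by the first stage, $\mu$-null. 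The most delicate point in the plan is this final extension to general test functions together with the required continuity of $\Phi$, both of which are handled by combining the canonical disintegration with the projection identity $(\pi_c)_*\mu=\nu$.
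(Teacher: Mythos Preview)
Your argument is correct and rests on the same idea as the paper's: Lemma~\ref{l.csjac_Markov_finiteness}(a)(b) forces each $(f^j)_*\nu^u_x$ to have $\xi$-conditionals equal to $\nu^u_y$, and this structure survives weak$^*$ limits. The paper packages this by choosing foliation charts $\Phi_i:I\times D\to B_i$ in which every $\nu^u_y$ becomes a fixed measure $\hat\nu_i$ on $I$, so that $\mu_n|_{B_i}$ is literally a product $\hat\nu_i\times m^i_n$ and any accumulation point inherits the product form; you reach the same conclusion via the test-function identity $\int\phi\,d\mu_n=\int\Phi\,d\mu_n$ with $\Phi(y)=\int_{\xi(y)}\phi\,d\nu^u_y$, which in those product charts is just Fubini. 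One small remark: your Stage~1 is not needed for the conclusion, since the definition of $\nu$-Gibbs $u$-state requires only the disintegration condition (Lemma~\ref{l.MM} then gives $\mu\in\MM(f)$ automatically); you invoke $(\pi_c)_*\mu=\nu$ only to ensure $\mu(\partial B_i)=0$ for the partition-of-unity step, a point the paper's product-chart argument handles implicitly.
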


\begin{proof}
Let $\Phi_i: I \times D \to B_i$ be a foliation chart for the foliation $\cF^u \mid_{B_i}$, that is,
a homeomorphism such that each $\Phi(\cdot,\theta)$ maps the interval $I$ diffeomorphically to an element
of the partition $\xi$ inside $B_i$.
This may be chosen in such a way that the image of each $\Phi_i(a,\cdot)$ is contained in a leaf of $\cF^{cs}$, and we do so.
We shall use on each $B_i$ the coordinates defined by the corresponding chart $\Phi$.
Observe that in these coordinates the partition $\xi \mid_{B_i}$ consists of the horizontal line segments
$I\times\{\theta\}$. Moreover, the assertion in Lemma~\ref{l.csjac_Markov_finiteness}(a)
$$
(\cH^{cs}_{x,y})_*(\nu^u_x)=\nu^u_y \text{ for any $x,y\in B_i$,}
$$
means that the disintegration of $\nu^u$ along $\xi\mid_{B_i}$ is constant:
the conditional measure is the same on every horizontal segment. Let $\hat\nu_i$ denote this measure,
which we may also view as a probability measure on $I$.
Then, by Lemma~\ref{l.csjac_Markov_finiteness}(b), every
$$
\frac{1}{n}\sum_{j=0}^{n-1}(f^j)_*(\nu^u_x)
$$
is a finite linear combination of such measures $\hat\nu_i$. It follows that every accumulation point for this
sequence is a sum of measures of the form $\hat\nu_i\times\xi_i$ on each $B_i \approx I \times D$.
Thus, its conditional measures are precisely the $\nu_i^u$, as we wanted to prove.
\end{proof}

\section{Maps with $\nu$-mostly contracting center}

We say that $f\in \SPH1$ has \emph{$\nu$-mostly contracting center} if for every $x\in M$ there is a positive $\nu^u_x$-measure
set $\Delta\subset \xi(x)$ such that
\begin{equation}\label{eq.mostlycontractiny}
\limsup_n \frac{1}{n}\log \|Df^n \mid_{E^c(y)}\|<0
\end{equation}
for every $y\in \Delta$. This is a straightforward adaptation of the notion of  diffeomorphisms with mostly contracting center
introduced in \cite{BV}: we just replace Lebesgue measure along unstable leaves with the conditional measures of $\nu^u$.
The next couple of propositions also have analogues in the classical setting (see \cite{BV,DVY}).
The proofs in our present context will be given in the following two subsections.

\begin{proposition}\label{p.criterionofmostlycontracting}
A diffeomorphism $f\in\SPH1$ has $\nu$-mostly contracting center if and only if every ergodic $\nu$-Gibbs $u$-state
has negative center exponent.
\end{proposition}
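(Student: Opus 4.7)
The proof is a biconditional handled direction-by-direction; the forward implication is standard while the reverse follows the classical Bonatti--Viana/Dolgopyat--Viana--Yang template adapted to the reference measure $\nu$.

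$(\Rightarrow)$ Let $\mu$ be an ergodic $\nu$-Gibbs $u$-state. By Birkhoff applied to $\phi=\log\|Df|_{E^c}\|$, for $\mu$-a.e.~$y$ the limit $\chi^c(y):=\lim_n\frac{1}{n}\log\|Df^n|_{E^c(y)}\|$ exists and equals $\int\phi\,d\mu$. The defining property $\mu^u_y=\nu^u_y$ of a $\nu$-Gibbs $u$-state then implies that for $\mu$-a.e.~$y$ the set where $\chi^c=\int\phi\,d\mu$ has full $\nu^u_y$-measure in $\xi(y)$. The $\nu$-mostly-contracting hypothesis applied at such a $y$ contributes a positive $\nu^u_y$-measure subset of $\xi(y)$ on which $\limsup_n\frac{1}{n}\log\|Df^n|_{E^c}\|<0$. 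Their intersection is nonempty; at any point $z$ in it one has $\chi^c(z)=\int\phi\,d\mu$ and $\limsup<0$, and since $\chi^c(z)$ coincides with the limsup when the limit exists, this forces $\int\phi\,d\mu<0$.

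$(\Leftarrow)$ Assume every ergodic $\nu$-Gibbs $u$-state has strictly negative center exponent. By Proposition~\ref{p.Gibbsustates}(b), every ergodic component of any $\mu\in\Gibb^u_\nu(f)$ is itself an ergodic $\nu$-Gibbs $u$-state, so integrating over the ergodic decomposition yields $\int\phi\,d\mu<0$ for every $\mu\in\Gibb^u_\nu(f)$. Continuity of $\mu\mapsto\int\phi\,d\mu$ and compactness of $\Gibb^u_\nu(f)$ (Proposition~\ref{p.Gibbsustates}(a)) upgrade this to a uniform bound $\int\phi\,d\mu\le-\lambda_0<0$. Fix $x\in M$ and form the Cesaro averages $\mu_n:=\frac{1}{n}\sum_{j=0}^{n-1}(f^j)_*\nu^u_x$. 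By Proposition~\ref{p.nupositivesetinleaf}, every weak-$*$ accumulation point $\mu^*$ of $(\mu_n)$ lies in $\Gibb^u_\nu(f)$, so $\int\phi\,d\mu^*\le-\lambda_0<0$.

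To pass from this integrated negativity to the pointwise condition, set $G:=\{y\in M:\limsup_n\frac{1}{n}\log\|Df^n|_{E^c(y)}\|<0\}$; this is an $f$-invariant Borel set. The ergodic components of $\mu^*$ are ergodic $\nu$-Gibbs $u$-states (Proposition~\ref{p.Gibbsustates}(b)) with strictly negative center exponent, so Birkhoff forces $\mu^*(G)=1$; the $\nu$-Gibbs disintegration of $\mu^*$ then yields $\nu^u_z(G\cap\xi(z))=1$ for $\mu^*$-a.e.~$z$. Since $\supp\mu^*$ is $u$-saturated (Proposition~\ref{p.Gibbsustates}(d)) and contained in the forward closure of $\xi(x)$, some iterate $f^j(\xi(x))$ meets a Markov box $B_i$ containing such a $\mu^*$-typical plaque; the holonomy equivariance in Lemma~\ref{l.csjac_Markov_finiteness}(a) transports the full-$\nu^u$-measure good subset across $B_i$, and the transformation rule in Lemma~\ref{l.csjac_Markov_finiteness}(b) pulls it back under $f^{-j}$ to yield a positive-$\nu^u_x$-measure subset of $\xi(x)$ on which $\limsup<0$.

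The principal obstacle is this final transfer: the set $G$ is $f$-invariant but not invariant under the center-stable holonomy $\cH^{cs}$, because the center component of $\cH^{cs}$ does not in general preserve forward Lyapunov exponents along $E^c$. The standard way around this is to replace the center segment of $\cH^{cs}$ by a zig-zag of strong-stable and strong-unstable arcs (using accessibility of $f\in\SPH1$) combined with iteration by $f^{\pm 1}$, exploiting that $\chi^+$ is constant along $\cF^s$-leaves and that the $f$-invariance of $G$ permits arbitrary iteration; the uniform slack $-\lambda_0$ absorbs the bounded corrections introduced by such manipulations, ensuring that the transported set still lies in $G$.
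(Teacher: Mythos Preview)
Your forward direction is fine and matches the paper. The reverse direction is set up correctly through the point where you obtain a $\nu$-Gibbs $u$-state $\mu^*$ with $\mu^*(G)=1$ and hence $\nu^u_z(G)=1$ for $\mu^*$-typical $z$. The gap is exactly where you locate it, and your proposed fix does not close it.

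The accessibility zig-zag does not work: the forward center exponent $\chi^+$ is indeed constant along $\cF^s$-leaves, but it is \emph{not} constant along $\cF^u$-leaves, and there is no a priori bound on how much it can change under unstable holonomy. Two points on the same unstable leaf separate under forward iteration, so their forward Birkhoff averages of $\log\|Df|_{E^c}\|$ need not be close; the uniform slack $-\lambda_0$ controls nothing here. Iterating by $f^{\pm1}$ does not help either, since $G$ is already $f$-invariant and the problem is purely the $u$-legs of the path.

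What the paper does instead is avoid transporting $G$ altogether. It picks a single ergodic component $\tilde\mu$ of $\mu^*$ (still a $\nu$-Gibbs $u$-state, with $\tilde\mu(\supp\mu^*)=1$) and works with $\Basin(\tilde\mu)$. Because $\lambda^c(\tilde\mu)<0$, Pesin theory gives $\tilde\mu$-almost every point a local stable manifold of dimension $\dim E^{cs}$, lying inside the center-stable leaf. These Pesin stable manifolds are contained in $\Basin(\tilde\mu)$. Choosing $z\in\supp\mu^*$ with a positive $\nu^u_z$-measure set $K\subset\xi(z)\cap\Basin(\tilde\mu)$ of points whose Pesin stable manifolds have uniform size, the $cs$-holonomy $\cH^{cs}_{z,y}$ for $y$ near $z$ simply slides along those Pesin stable manifolds, so $\cH^{cs}_{z,y}(K)\subset\Basin(\tilde\mu)$. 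Now Lemma~\ref{l.csjac_Markov_finiteness}(a) gives $\nu^u_y(\Basin(\tilde\mu))\ge\nu^u_z(K)>0$ for every such $y$; weak-$*$ convergence and Lemma~\ref{l.csjac_Markov_finiteness}(b) then pull this back to $\nu^u_x(\Basin(\tilde\mu))>0$, and Birkhoff on the basin yields the required negativity. The missing ingredient in your argument is precisely this use of Pesin stable manifolds of full $cs$-dimension to make the $cs$-holonomy legitimate.
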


\begin{proposition}\label{p.isolated}
If $f\in\SPH1$ has $\nu$-mostly contracting center then it has finitely many ergodic $\nu$-Gibbs $u$-states,
and their supports are pairwise disjoint.
Moreover, each support is the union of finitely many minimal components of the unstable foliation.
\end{proposition}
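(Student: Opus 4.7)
The plan is to adapt the strategy of Bonatti--Viana \cite{BV} and Dolgopyat--Viana--Yang \cite{DVY}, replacing Lebesgue measure along unstable leaves by the conditionals $\nu^u_x$, and using Proposition~\ref{p.nupositivesetinleaf} in place of the usual averaging of Lebesgue push-forwards along unstable plaques. By Proposition~\ref{p.criterionofmostlycontracting}, every ergodic $\nu$-Gibbs $u$-state $\mu$ has negative center Lyapunov exponent; together with uniform contraction in $E^s$, this makes the whole bundle $E^{cs}$ Pesin-contracting at $\mu$-a.e.\ point. Pesin theory then yields, for each small $\varepsilon>0$, a Pesin block $\Lambda_\varepsilon(\mu)$ with $\mu(\Lambda_\varepsilon(\mu))>1-\varepsilon$ on which each point $x$ has a local center-stable Pesin disk $W^{cs}_{\loc}(x)\subset\cF^{cs}(x)$ of uniform size, varying continuously with $x$. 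Combining the local product structure of $\cF^u$ with $W^{cs}_{\loc}$, the $\nu^u$-absolute continuity of the center-stable holonomy (Lemma~\ref{l.csjac_Markov_finiteness}), and Proposition~\ref{p.nupositivesetinleaf}, we obtain at a density point $x\in\Lambda_\varepsilon(\mu)$ an open neighborhood $U(x)\subset M$ such that, for every unstable plaque $\cF^u_{\loc}(y)\subset U(x)$, $\nu^u_y$-almost every $z$ has forward Birkhoff averages converging to $\mu$; in particular $U(x)\subset\supp\mu$.

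Suppose two ergodic $\nu$-Gibbs $u$-states $\mu,\mu'$ satisfy $\supp\mu\cap\supp\mu'\neq\emptyset$. By Proposition~\ref{p.Gibbsustates}(d), both supports are $\cF^u$-saturated, so the intersection contains entire unstable leaves. Applying the absorbing construction of the previous paragraph to Pesin-regular points for $\mu$ and $\mu'$, the corresponding open neighborhoods overlap along these common unstable leaves, producing an open set $V\subset\supp\mu\cap\supp\mu'$. For $z\in V$, Proposition~\ref{p.nupositivesetinleaf} shows that any weak$^*$ accumulation point of $\frac{1}{n}\sum_{j=0}^{n-1}(f^j)_*\nu^u_z$ is a $\nu$-Gibbs $u$-state; the basin uniqueness inside $V$ forces this accumulation point to coincide with both $\mu$ and $\mu'$, so $\mu=\mu'$. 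Hence distinct ergodic $\nu$-Gibbs $u$-states have disjoint supports; since each such support has non-empty interior in the compact manifold $M$, only finitely many can exist.

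For the decomposition of $\supp\mu$ into $\cF^u$-minimal components, Zorn's lemma produces a minimal $\cF^u$-saturated subset $K\subset\supp\mu$. Since $f$ preserves $\cF^u$, each iterate $f^n K$ is again an $\cF^u$-minimal component. Applying Proposition~\ref{p.nupositivesetinleaf} to a point $x\in K$ yields a $\nu$-Gibbs $u$-state supported in $\overline{\bigcup_{n\geq 0}f^n K}$, which by the uniqueness established above must equal $\mu$, so $\supp\mu=\overline{\bigcup_n f^n K}$. The absorbing construction applied inside each $f^n K$ endows it with non-empty interior in $\supp\mu$; as these interiors are pairwise disjoint and $\supp\mu$ is compact, the orbit $\{f^n K\}$ is finite and $\supp\mu$ is their union. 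The main obstacle in this plan is the construction of the open absorbing neighborhood $U(x)$: one must carefully combine the Pesin center-stable manifold theorem, the $\nu^u$-absolute continuity of Lemma~\ref{l.csjac_Markov_finiteness}, and Proposition~\ref{p.nupositivesetinleaf} to promote $\nu^u$-almost-everywhere convergence on an unstable plaque into an honest open subset of the basin of $\mu$.
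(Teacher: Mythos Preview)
Your strategy follows the Bonatti--Viana/DVY template and the Pesin basin machinery is the right tool, but there is a genuine gap: the claim that the absorbing neighborhood $U(x)$ is contained in $\supp\mu$ --- and hence that $\supp\mu$ has non-empty interior in $M$ --- is false in general. The Pesin center-stable disks through points of $\Lambda_\varepsilon(\mu)\cap\xi(x)$ do sweep out an open subset of $M$, but points on those disks lie in $\Basin(\mu)$, which is typically much larger than $\supp\mu$. Since $\supp\mu$ is $\cF^u$-saturated but not $\cF^{cs}$-saturated, the $\cF^{cs}$-holonomy image of $\supp\mu\cap\xi(x)$ need not remain in $\supp\mu$. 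A concrete obstruction: if $f$ admits a normally attracting invariant section over the base torus (a codimension-one invariant submanifold), the ergodic $\nu$-Gibbs $u$-state supported on that section has support with empty interior in $M$, and your finiteness argument via ``disjoint supports with non-empty interior'' collapses. (The claim that $\nu^u_y$-\emph{almost every} $z\in\xi(y)$ lies in $\Basin(\mu)$ is also too strong: the holonomy and Lemma~\ref{l.csjac_Markov_finiteness}(a) only yield positive $\nu^u_y$-measure.)

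The paper circumvents this by arguing finiteness sequentially: if $(\mu_n)_n$ were infinitely many distinct ergodic $\nu$-Gibbs $u$-states, pass to a weak$^*$ limit $\mu_0\in\Gibb^u_\nu(f)$, pick an ergodic component $\tilde\mu$, and run the Pesin capture in the correct direction --- the ball $B_\delta(z)$ around a $\tilde\mu$-typical $z$ is a region where every plaque $\xi(y)$ carries positive $\nu^u_y$-mass of $\Basin(\tilde\mu)$, so once $\mu_n(B_\delta(z))>0$ a $\mu_n$-typical plaque enters the ball and one gets $\mu_n=\tilde\mu$, a contradiction. Disjointness is proved by the same basin-intersection trick, without any open-set detour. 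For the minimal-component decomposition the paper produces a hyperbolic periodic point $p\in\supp\mu$ via Katok's closing lemma, shows $\supp\mu=\Cl(\cF^u(\Orb(p)))$, and then analyses the ergodic decomposition of $\mu$ for the iterate $f^\kappa$ (with $\kappa$ the period of $p$); your Zorn's-lemma route again leans on the unjustified ``non-empty interior in $\supp\mu$'' step and would need a different finiteness mechanism.
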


\subsection{Proof of Proposition~\ref{p.criterionofmostlycontracting} }

Suppose that $f$ has $\nu$-mostly contracting center and let $\mu$ be any ergodic $\nu$-Gibbs $u$-state of $f$.
By ergodicity,
$$
\lim_n \frac{1}{n}\log \|Df^n \mid_{E^c(y)}\|
$$
coincides with the center exponent $\lambda^c(\mu)$ at $\mu$-almost every point or, equivalently, at $\mu^u_x$-almost every point
for $\mu$-almost every $x$. Thus, since it is assumed that $\mu_x^u=\nu_x^u$ at $\mu$-almost every $x$, the assumption
\eqref{eq.mostlycontractiny} implies that $\lambda^c(\mu)<0$ is negative.

Conversely, suppose that all ergodic $\nu$-Gibbs $u$-states of $f$ have negative center exponent.
Using  Proposition \ref{p.Gibbsustates}(a), for any $x\in B_i$ there exists an increasing sequence $(n_k)_k$
of positive integers such that
$$
\lim_{k\to \infty} \frac{1}{n_k}\sum_{j=0}^{n_k-1} f^j_* \nu^u_x
$$
converges to some $\nu$-Gibbs $u$-state $\mu$. By part (b) of that same proposition, almost every ergodic component of $\mu$
is a $\nu$-Gibbs $u$-state. It is clear that $\supp\mu$ is a full measure set for almost every ergodic component of $\mu$.
Thus there exists an ergodic component $\tilde\mu$ of $\mu$ that is a $\nu$-Gibbs $u$-state and satisfies $\tilde\mu(\supp\mu)=1$.

We call \emph{basin} of a measure the set of points whose (forward) time averages converge to that measure.
By ergodicity, the basin $\Basin(\tilde\mu)$ has full $\tilde\mu$-measure.
By definition, $\tilde\mu^u_x=\nu^u_x$ for a full $\tilde\mu$-measure set $\Gamma\subset\Basin(\tilde\mu)\cap\supp\mu$.
By assumption, the center exponent $\lambda^c(\tilde{\mu})$ is negative, and so $\tilde\mu$-almost every point has
a Pesin local stable manifold with dimension equal to $\dim E^{cs}$. It is no restriction to suppose that this
holds for every $x\in\Gamma$ (reducing this set if necessary).
Let $\tilde\Gamma\subset\Gamma$ be a full $\tilde\mu$-measure subset such that $\nu_x(\Gamma)=1$ for every $x\in\tilde\Gamma$.
Next, fix $z \in \tilde\Gamma$ such that $\nu_{\tilde x}(\Gamma)=1$ and, consequently, there exists some
positive $\nu_z$-measure set $K\subset\xi(z)$ consisting of points with Pesin local stable manifolds of size uniformly bounded from below.
Observe that the Pesin local stable manifolds are contained in the corresponding center-stable leaves,
and that the holonomy induced by the center-stable foliation preserves the family of reference measures $\nu^u_x$
(see part (a) of Lemma~\ref{l.csjac_Markov_finiteness}).
Thus, there exists $\delta>0$ such that,  for every $y$ in the $\delta$-ball around $z$, the local stable manifolds through
the points of $K$ intersect $\xi(y)$ on a subset whose $\nu_y^u$-measure is independent of $y$.
Notice that all these Pesin local stable manifolds are contained in $\Basin(\tilde\mu)$.
In particular, $\nu^u_y(\Basin(\tilde\mu))$ is positive for any $y$ in the $\delta$-ball around $z$.
This $\delta$-ball has positive $\mu$-measure, since $z\in\supp\mu$. So, by weak$^*$ convergence,
$$
\lim_k \frac{1}{n_k }\sum_{j=0}^{n_k-1} (f^j)_*\nu^u_x(B_\delta(z)) \ge \mu\left(B_\delta(z)\right) > 0.
$$
This ensures that there are $j\ge 0$ and $y \in B_\delta(z)\cap f^j(\xi(x))$ such that
$\Basin(\tilde{\mu})\cap\xi(y)$ has positive $\nu^u_y$-measure.
Since $\Basin(\tilde{\mu})$ is invariant under iteration, the Markov property in Lemma~\ref{l.csjac_Markov_finiteness}(b),
implies that
$$
\nu^u_x(\Basin(\tilde{\mu}))
= \nu^u_{f^{-j}(y)}(\Basin(\tilde{\mu}))>0
$$
Finally, the Birkhoff ergodic theorem asserts that
$$
\begin{aligned}
\lim_n \frac{1}{n} \log\|Df^n\mid_{E^c(w)}\|
& =\lim_n \frac{1}{n}\sum_{j=0}^{n-1}\log\|Df\mid_{E^c(f^j(w))}\|\\
&=\int \log \|Df\mid_{E^c}\|d\tilde{\mu}
= \lambda^c(\tilde{\mu})<0.
\end{aligned}
$$
for every $w \in \xi(x) \cap \Basin(\tilde{\mu})$. Hence $f$ is mostly contracting.

This completes the proof of Proposition~\ref{p.criterionofmostlycontracting}.

\subsection{Proof of Proposition  \ref{p.isolated}\label{ss.mostlycontracting}}

The proof consists of several lemmas.

\begin{lemma}\label{l.finiteness}
There are only finitely many ergodic $\nu$-Gibbs $u$-states of $f$.
\end{lemma}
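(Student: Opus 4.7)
The plan is to argue by contradiction, adapting the standard argument for diffeomorphisms with mostly contracting center (cf.\ \cite{BV,DVY}) to the $\nu$-Gibbs setting. Suppose $f$ admits pairwise distinct ergodic $\nu$-Gibbs $u$-states $\mu_1,\mu_2,\dots$. Since $\Gibb^u_\nu(f)$ is compact by Proposition~\ref{p.Gibbsustates}(a), I pass to a subsequence so that $\mu_n\to\mu$ in the weak$^*$ topology, with $\mu\in\Gibb^u_\nu(f)$. By Proposition~\ref{p.Gibbsustates}(b), some ergodic component $\tilde\mu$ of $\mu$ is again a $\nu$-Gibbs $u$-state, and since the $\mu_n$ are pairwise distinct, I can further assume $\mu_n\neq\tilde\mu$ for every $n$.

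The first step is to repeat, for $\tilde\mu$, the Pesin-theoretic construction used in the second half of the proof of Proposition~\ref{p.criterionofmostlycontracting}. By that proposition, the hypothesis that $f$ has $\nu$-mostly contracting center forces $\tilde\mu$ to have negative center exponent, so $\tilde\mu$-almost every point has a Pesin local stable manifold. Using part (a) of Lemma~\ref{l.csjac_Markov_finiteness} (center-stable holonomies preserve the family $\{\nu^u_x\}$) to transport a positive $\nu^u_z$-measure set of points with uniformly large Pesin stable manifolds from one plaque of $\xi$ to nearby plaques, I extract a point $z\in\supp\tilde\mu$, a radius $\delta>0$ with $B_\delta(z)$ contained in a single Markov set $B_i$, and a constant $c>0$ such that
$$
\nu^u_y\bigl(\Basin(\tilde\mu)\bigr)\geq c \quad\text{for every } y\in B_\delta(z).
$$

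Next, I produce a good base point $y_n\in B_\delta(z)$ for each large $n$. Since $z\in\supp\tilde\mu\subset\supp\mu$, weak$^*$ lower semicontinuity for open sets gives $\liminf_n \mu_n(B_\delta(z))\geq\mu(B_\delta(z))>0$. On the other hand, because $\mu_n$ is an ergodic $\nu$-Gibbs $u$-state, Birkhoff's theorem applied to continuous observables, combined with the disintegration identity $(\mu_n)^u_y=\nu^u_y$ (which holds $\mu_n$-almost everywhere by the definition of $\nu$-Gibbs $u$-state), implies that the set
$$
A_n=\bigl\{y : (\mu_n)^u_y=\nu^u_y \text{ and } \nu^u_y(\Basin(\mu_n))=1\bigr\}
$$
has full $\mu_n$-measure. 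Hence for every sufficiently large $n$ I can pick $y_n\in A_n\cap B_\delta(z)$.

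Such a $y_n$ satisfies simultaneously $\nu^u_{y_n}(\Basin(\mu_n))=1$ and $\nu^u_{y_n}(\Basin(\tilde\mu))\geq c$, so $\nu^u_{y_n}\bigl(\Basin(\mu_n)\cap\Basin(\tilde\mu)\bigr)\geq c>0$; in particular the two basins meet, contradicting the fact that basins of distinct ergodic invariant measures are disjoint. The main obstacle I anticipate is the first step: isolating cleanly the uniform lower bound $c$ on $\nu^u_y(\Basin(\tilde\mu))$ over the entire ball $B_\delta(z)$. This requires a careful Pesin-block construction inside one Markov set, together with the observation that the center-stable holonomy maps the Pesin stable lamination of a positive $\nu^u_z$-measure subset of $\xi(z)$ onto a subset of $\xi(y)$ whose $\nu^u_y$-measure is exactly preserved, which is precisely the content packaged in Lemma~\ref{l.csjac_Markov_finiteness}(a).
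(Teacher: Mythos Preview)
Your proposal is correct and follows essentially the same route as the paper's proof: contradiction via compactness of $\Gibb^u_\nu(f)$, extraction of an ergodic component $\tilde\mu$ with negative center exponent, a Pesin-block argument yielding a uniform lower bound on $\nu^u_y(\Basin(\tilde\mu))$ near a good point $z\in\supp\tilde\mu\subset\supp\mu$, and then the observation that for large $n$ one can find $y_n\in B_\delta(z)$ with $\nu^u_{y_n}(\Basin(\mu_n))=1$, forcing the basins to intersect. The only cosmetic difference is that the paper does not name the constant $c$ explicitly but instead states that the $\nu^u_y$-measure of the holonomy image of $K$ equals $\nu^u_z(K)$.
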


\begin{proof}
Suppose there are infinitely many ergodic $\nu$-Gibbs $u$-states $\mu_n$. By Proposition \ref{p.Gibbsustates}(a)
we may assume that $(\mu_n)_n$ converges to some $\nu$-Gibbs $u$-state $\mu_0$.
By Proposition \ref{p.Gibbsustates}(b), almost every ergodic component $\tilde{\mu}$ of $\mu_0$ is a $\nu$-Gibbs
$u$-state. Moreover, $\tilde{\mu}(\supp\mu)=1$ and so $\supp(\tilde{\mu}) \subset \supp(\mu)$.
By Proposition \ref{p.criterionofmostlycontracting}, the center exponent of $\tilde{\mu}$ is negative.
Thus, $\tilde\mu$-almost every point has a Pesin local stable manifold of dimension $\dim E^{cs}$.
So, in view of the definition of $\nu$-Gibbs $u$-state, for $\tilde\mu$-almost every $z$ there exists a set
$K\subset\xi(z)\cap\Basin(\tilde\mu)$ with positive $\nu_z^u$-measure and such that its points have Pesin local
stable manifolds with size bounded from zero.
Keep in mind that these Pesin local stable manifolds are contained in $\Basin(\tilde\mu)$ and the holonomy
induced by the center stable foliation preserves the family of conditional measures $\nu_y^u$.
Let such a point $z$ be fixed from now on, and $\delta>0$ be small enough that, for every $y\in B_\delta(z)$,
$\xi(y)$ intersects $\Basin(\tilde\mu)$ on a subset with $\nu^u_y$-measure equal to $\nu^u_z(K)>0$.
It is clear that $\mu_n(B_\delta(z))$ is positive for every large $n$. Let $n$ be fixed, large enough.
By ergodicity, $\Basin(\mu_n)$ has full $\mu_n$-measure or, equivalently, full $\mu_{n,y}$-measure for
$\mu_n$-almost every $y$. On the other hand, for $\mu_n$-almost every $y\in B_\delta(z)$ we have that
$\mu^u_{n,y}=\nu^u_{y}$ which, in view of the previous observations, implies that $\mu_{n,y}(\Basin(\tilde\mu))>0$.
Combining these two observations, we see that the basins of $\mu_n$ and $\tilde{\mu}$ intersect.
Thus $\mu_n=\tilde{\mu}$, a contradiction.
\end{proof}

Now we are going to show that
\begin{lemma}\label{l.disjoint}
The supports of different ergodic $\nu$-Gibbs $u$-states are disjoint.
\end{lemma}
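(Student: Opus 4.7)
The plan is to argue by contradiction, closely following the scheme used in the proof of Lemma~\ref{l.finiteness}. Assume there exist two distinct ergodic $\nu$-Gibbs $u$-states $\mu_1 \neq \mu_2$ of $f$ whose supports share a common point $w$. The goal is to produce a point lying in both $\Basin(\mu_1)$ and $\Basin(\mu_2)$, contradicting the disjointness of basins of distinct ergodic invariant measures, and therefore forcing $\mu_1 = \mu_2$.

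Since $f$ has $\nu$-mostly contracting center, Proposition~\ref{p.criterionofmostlycontracting} gives that $\mu_1$ has negative center Lyapunov exponent. Hence Pesin theory supplies a full-$\mu_1$-measure set of points with Pesin local stable manifolds of dimension $\dim E^{cs}$, all lying inside $\Basin(\mu_1)$. Combined with the $\nu$-Gibbs property $\mu_1^u_z = \nu^u_z$ (Proposition~\ref{p.iff}), this yields that for $\mu_1$-a.e.\ $z$ the plaque $\xi(z)$ contains a set $K \subset \Basin(\mu_1)$ of positive $\nu^u_z$-measure whose points carry Pesin local stable manifolds of size uniformly bounded below by some $r_0 > 0$.

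Since the Pesin local stable manifolds are contained in the corresponding center-stable leaves, the invariance of $\nu^u$ under center-stable holonomy (Lemma~\ref{l.csjac_Markov_finiteness}(a)) gives a radius $\delta = \delta(r_0) > 0$ such that, for every $y \in B_\delta(z)$, one has $\nu^u_y\bigl(\xi(y) \cap \Basin(\mu_1)\bigr) \geq \nu^u_z(K) > 0$. I would then arrange for $z$ to be close enough to $w$ that $w \in B_\delta(z)$, using a Luzin-type selection that combines the fact that the set of $z$ admitting such a $K$ at uniform Pesin scale $r_0$ has $\mu_1$-measure arbitrarily close to $1$ (as $r_0$ is lowered), with the positivity $\mu_1(B_\eta(w)) > 0$ for every $\eta > 0$ (because $w \in \supp\mu_1$). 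Once $w \in B_\delta(z)$, the positivity $\mu_2\bigl(B_\delta(z)\bigr) > 0$ follows from $w \in \supp\mu_2$.

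The argument is then completed as in Lemma~\ref{l.finiteness}: the $\nu$-Gibbs property of $\mu_2$ yields $\mu_2^u_y = \nu^u_y$ for $\mu_2$-a.e.\ $y$, whence $\mu_2^u_y\bigl(\Basin(\mu_1)\bigr) \geq \nu^u_z(K) > 0$ for $\mu_2$-a.e.\ $y \in B_\delta(z)$; on the other hand, ergodicity of $\mu_2$ gives $\mu_2^u_y\bigl(\Basin(\mu_2)\bigr) = 1$ for $\mu_2$-a.e.\ $y$. Combining the two forces $\Basin(\mu_1) \cap \Basin(\mu_2) \neq \emptyset$, the desired contradiction. The main obstacle I expect is the joint control of the Pesin scale $r_0$ and the proximity $d(z,w) < \delta(r_0)$: both $\delta(r_0)$ and $\mu_1(B_{\delta(r_0)/2}(w))$ decay to $0$ as $r_0 \to 0$, so one must compare their rates of decay and choose $r_0$ accordingly. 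This is the only step that does not transcribe verbatim from the proof of Lemma~\ref{l.finiteness}, where the analogous positivity of $\mu_n$ on a ball around $z$ was obtained directly from the weak$^*$ convergence $\mu_n \to \mu_0$ rather than from a localization argument.
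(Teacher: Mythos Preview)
Your overall strategy---producing a point in $\Basin(\mu_1)\cap\Basin(\mu_2)$ via Pesin stable manifolds and the $cs$-holonomy invariance of the measures $\nu^u$---is exactly the paper's, and your identification of the obstacle is accurate. The circularity you flag is genuine: the Pesin scale $r_0$ determines $\delta(r_0)$, but placing a $\mu_1$-typical $z$ inside $B_{\delta(r_0)}(w)$ requires $\mu_1(B_{\delta(r_0)}(w))$ to exceed the $\mu_1$-measure of the ``bad'' $z$'s at scale $r_0$, and there is no a priori comparison between the decay of $\delta(r_0)$ and that of $\mu_1(B_\eta(w))$ as $\eta\to 0$. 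The ``compare rates'' suggestion does not resolve this.

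The paper sidesteps the issue by a small but decisive change of viewpoint: it places the set $\Gamma$ of points with uniform-size Pesin stable manifolds directly at the common support point $x=w$, rather than at a $\mu_1$-typical $z$. This is legitimate because the $\nu$-mostly contracting hypothesis (the standing assumption of Proposition~\ref{p.isolated}) applies at \emph{every} $x\in M$: a positive $\nu^u_x$-measure subset of $\xi(x)$ has negative center exponent, and Pesin theory then yields stable manifolds of uniform size on a further positive-measure subset $\Gamma\subset\xi(w)$. Now $\delta$ is fixed \emph{before} one chooses $x_1$ ($\mu_1$-typical) and $x_2$ ($\mu_2$-typical) inside $B_\delta(w)$, which is possible simply because $w\in\supp\mu_i$ for both $i$. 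The stable manifolds through $\Gamma$ reach both $\xi(x_1)$ and $\xi(x_2)$; by Lemma~\ref{l.csjac_Markov_finiteness}(a) the holonomy images have positive $\nu^u$-measure in each, so they meet $\Basin(\mu_1)$ and $\Basin(\mu_2)$ respectively, and since basins are saturated by stable sets the two basins intersect along the common stable manifolds. In short: build the Pesin block at $w$ first, then pick the typical points---this breaks the circularity you were worried about.
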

\begin{proof}
Suppose that there are ergodic $\nu$-Gibbs $u$-states $\mu_1$ and $\mu_2$ whose supports are not disjoint,
and let $x\in \supp \mu_1 \cap \supp \mu_2$. Then, as shown in the proof of
Proposition~\ref{p.criterionofmostlycontracting}, there is $\Gamma\subset \xi(x)$ with positive
$\nu^u_x$-measure such that the Pesin local stable manifolds of its points dimension  is equal to $\dim \cF^{cs}$
and have uniform size. Choose points $x_1$ and $x_2$ close to $x$ such that their conditional measures
along unstable leaves satisfy:
$$
\mu^u_{1,x_1} = \nu^u_{x_1} \text{ and } \mu^u_{2,x_2} = \nu^u_{x_2}.
$$
Assume furthermore that $x_1$ and $x_2$ are such that $\mu^u_{i,x_i}$-almost every point belongs to
the basin of $\mu_i$, for $i=1, 2$. Recall that the basins of both measures are saturated by Pesin
local stable manifolds.
By Lemma~\ref{l.csjac_Markov_finiteness}(a), the local stable holonomy preserves the family of conditional
measures $\nu^u_x$. It follows that the basin of $\mu_1$ and $\mu_2$ intersect, and hence $\mu_1=\mu_2$.
This contradiction proves the claim..
\end{proof}

It remains to show that the support of every ergodic $\nu$-Gibbs $u$-state $\mu$ consists of finitely
many connected components, and each of them is an $\cF^u$-minimal component.

\begin{lemma}\label{l.existenceperiodicpoint}
There is a hyperbolic periodic orbit $\Orb(p)$ with stable index $\dim E^{cs}$ contained in $\supp\mu$.
\end{lemma}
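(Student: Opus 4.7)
The plan is to apply Katok's closing lemma to the ergodic hyperbolic measure $\mu$ and then exploit the $u$-saturation of $\supp\mu$ to place the resulting periodic orbit inside $\supp\mu$.

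First, since $f$ has $\nu$-mostly contracting center, Proposition~\ref{p.criterionofmostlycontracting} gives $\lambda^c(\mu)<0$. Combined with the uniform expansion along $E^u$ and uniform contraction along $E^s$, this makes $\mu$ a hyperbolic ergodic measure whose Oseledets stable bundle is $E^{cs}$ and whose Oseledets unstable bundle is $E^u$. Pesin theory (applicable because $f$ is $C^2$) therefore supplies, on a full $\mu$-measure set $\Lambda$, Pesin local stable manifolds of dimension $\dim E^{cs}$ tangent to $E^{cs}$, whose size is uniformly bounded below on each Pesin block; the unstable manifold through each $x\in\Lambda$ contains the uniform strong unstable leaf $\cF^u(x)$.

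Next, fix a $\mu$-generic point $x\in\Lambda$ lying in a Pesin block of positive $\mu$-measure. Katok's closing lemma (in its version for non-uniformly hyperbolic $C^{1+\alpha}$ diffeomorphisms) produces, for arbitrarily large $n$ with $f^n(x)$ close to $x$, a hyperbolic periodic point $p$ of period $n$ whose orbit shadows $x,f(x),\dots,f^n(x)$. The shadowing keeps the Oseledets splitting close to that of $x$, so $p$ is a hyperbolic saddle with stable index $\dim E^{cs}$. Moreover, the same construction (or equivalently the Smale--Katok $\lambda$-lemma applied to the Pesin stable and unstable manifolds) yields a transverse intersection point
\[
z\in W^s(p)\pitchfork \cF^u(x).
\]

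Finally, by Proposition~\ref{p.Gibbsustates}(d) the support $\supp\mu$ is $u$-saturated, so $\cF^u(x)\subset\supp\mu$ and in particular $z\in\supp\mu$. Since $z\in W^s(p)$, one has $f^k(z)\to\Orb(p)$ as $k\to+\infty$; because $\supp\mu$ is closed and $f$-invariant, the limit set $\Orb(p)$ is contained in $\supp\mu$, proving the lemma. The only delicate step is the application of Katok's closing lemma, which requires $C^{1+\alpha}$ regularity and the hyperbolicity of $\mu$; both are available here, and the index of the produced saddle and the existence of the transverse intersection with $\cF^u(x)$ are standard consequences of the construction.
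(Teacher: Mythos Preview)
Your proof is correct and follows essentially the same route as the paper's own argument: apply Katok's closing lemma to the hyperbolic measure $\mu$ to obtain a saddle $p$ of stable index $\dim E^{cs}$ together with a transverse intersection point in $W^s(p)\cap\cF^u(x)$ for some $x\in\supp\mu$, then use the $u$-saturation of $\supp\mu$ (Proposition~\ref{p.Gibbsustates}(d)) and its closedness and invariance to pull $\Orb(p)$ into the support. Your write-up is in fact more explicit than the paper's about why $\mu$ is hyperbolic and why the closing lemma applies.
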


\begin{proof}
This is a consequence of Katok's closing lemma. Indeed, by \cite{K} there one can find $x\in\supp\mu$
and a periodic point $p$ such that $W^s(p)$ has dimension equal to $\dim E^{cs}$ and intersects the
unstable leaf of $x$ transversely at some point $y$. Since $\supp \mu$ is $u$-saturated (by part (d) of
Proposition~\ref{p.Gibbsustates}), the point $y\in W^s_{loc}(p) \cap \supp(\mu)$.
Let $\kappa$ be the minimum period of $p$.
Since $\supp \mu$ is invariant and closed, it follows that $p=\lim_{n} f^{n\kappa}(y)$ also belongs to
$\supp(\mu)$.
\end{proof}

\begin{lemma}\label{l.closureofunstable}
Assuming $\mu$ is ergodic, $\supp \mu=\Cl(\cF^u(\Orb(p)))$.
\end{lemma}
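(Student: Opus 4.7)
The plan is to prove the two inclusions separately. The containment $\Cl(\cF^u(\Orb(p)))\subset\supp\mu$ is immediate: Lemma~\ref{l.existenceperiodicpoint} places $\Orb(p)$ inside $\supp\mu$, and Proposition~\ref{p.Gibbsustates}(d) guarantees that $\supp\mu$ is closed, $f$-invariant, and $u$-saturated, so it contains $\cF^u(\Orb(p))$ together with its closure.

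For the reverse inclusion, set $A:=\Cl(\cF^u(\Orb(p)))$. The strategy is to construct a $\nu$-Gibbs $u$-state that is supported simultaneously in $A$ and in $\supp\mu$, and then to use the finiteness and disjointness of ergodic $\nu$-Gibbs $u$-states already available to force this auxiliary measure to coincide with $\mu$ itself. Concretely, I apply Proposition~\ref{p.nupositivesetinleaf} at $x=p$ to obtain, along a subsequence, a weak-$*$ limit $\bar\mu$ of the averages $\tfrac{1}{n}\sum_{j=0}^{n-1}(f^j)_*\nu^u_p$; this $\bar\mu$ is automatically a $\nu$-Gibbs $u$-state. Each averaging measure is carried by $\bigcup_{j\ge 0}f^j(\xi(p))\subset\cF^u(\Orb(p))$, so the portmanteau inequality for closed sets gives $\bar\mu(A)=1$. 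A second application, using that $\xi(p)\subset\cF^u(p)\subset\supp\mu$ forces $\nu^u_p(\supp\mu)=1$ together with the $f$-invariance of $\supp\mu$, yields $\bar\mu(\supp\mu)=1$ as well.

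The last step is to identify $\bar\mu=\mu$. By Proposition~\ref{p.Gibbsustates}(b) almost every ergodic component of $\bar\mu$ is itself a $\nu$-Gibbs $u$-state, and by Lemma~\ref{l.finiteness} there are only finitely many such; hence $\bar\mu=\sum_i c_i\mu^{(i)}$ is a finite convex combination of ergodic $\nu$-Gibbs $u$-states. Writing $\mu=\mu^{(i_0)}$, the disjointness of supports (Lemma~\ref{l.disjoint}) gives $\mu^{(i)}(\supp\mu)=0$ for each $i\ne i_0$, which combined with $\bar\mu(\supp\mu)=1$ forces $c_{i_0}=1$ and $c_i=0$ otherwise. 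Thus $\bar\mu=\mu$, and since $\bar\mu(A)=1$ we conclude $\supp\mu\subset A$, finishing the proof. The main delicate point is securing the double support statement $\bar\mu(A)=\bar\mu(\supp\mu)=1$: one side uses that $\nu^u_p$ is carried by the single plaque $\xi(p)\subset\cF^u(p)$, and the other that this plaque already lies inside $\supp\mu$ because $p\in\supp\mu$ and $\supp\mu$ is $u$-saturated; it is exactly this coupling via $\xi(p)$ that pins the abstract accumulation measure $\bar\mu$ to the specific $\mu$ we started with, rather than to some other ergodic $\nu$-Gibbs $u$-state whose support happens to lie in $A$.
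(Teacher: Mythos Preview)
Your proof is correct and follows essentially the same route as the paper: both arguments take an accumulation point $\bar\mu$ of the Ces\`aro averages of $\nu^u_p$, observe that it is a $\nu$-Gibbs $u$-state supported inside $A=\Cl(\cF^u(\Orb(p)))\subset\supp\mu$, and then use Lemma~\ref{l.disjoint} on the ergodic components to force $\bar\mu=\mu$, whence $\supp\mu\subset A$. The only cosmetic difference is that the paper argues directly that every ergodic component of $\bar\mu$ equals $\mu$ (without needing Lemma~\ref{l.finiteness}), whereas you write $\bar\mu$ as a finite convex combination first; both variants are fine.
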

\begin{proof}
It is clear that $\supp \mu \supset \Cl(\cF^u(\Orb(p)))$, since $\supp\mu$ is closed and $\cF^u$-saturated.
To prove the converse, let $\hat{\mu}$ be any accumulation point of the sequence
$$
\frac{1}{n}\sum_{i=0}^{n-1}f^i_* \nu^u_p.
$$
It is clear that $\supp \hat{\mu}$ is contained in $\Cl(\cF^u(\Orb(p)))$ and, thus, is contained in $\supp\mu$.
By parts (d) and (b) of Proposition~\ref{p.Gibbsustates}, $\hat{\mu}$ is a $\nu$-Gibbs $u$-state, and so are
almost all its ergodic components. For any ergodic component $\tilde{\mu}$,
it is clear that $\supp\tilde{\mu} \subset \supp\hat{\nu} \subset  \Cl(\cF^u(\Orb(p))) \subset\supp\mu$.
Recalling that $\mu$ and $\tilde{\mu}$ are ergodic, it follows from Lemma~\ref{l.disjoint} that $\tilde\mu = \mu$.
Thus $\hat\mu=\mu$, and the claim follows immediately.
\end{proof}

Let $p$ be a hyperbolic periodic point as in Lemma~\ref{l.existenceperiodicpoint}, and let $\kappa$ be its period.
Note that $F=f^{\kappa}$ also has $\nu$-mostly contracting center: the assumption \eqref{eq.mostlycontractiny}
remains valid if we replace $f$ by any positive iterate, clearly. Since $\mu$ is $f$-ergodic, its ergodic composition
for $F$ has the form
$$
\mu=\frac{1}{l} \left(\mu_0+\cdots+f^{l-1}_*(\mu_0)\right)
$$
for some $\mu_0$ such that $f^l_*(\mu_0)=\mu_0$ and some divisor $l$ of $\kappa$.
By Proposition~\ref{p.Gibbsustates}(b), the $f_*^i\mu_0$ are $\nu$-Gibbs $u$-states.
Then, by Lemma~\ref{l.disjoint}, their supports are pairwise disjoint.

It remains to show that each $\supp(f^i_*(\mu_0))$ is an $\cF^u$-minimal component. It is no restriction to suppose that
$i=0$ and $p\in \supp \mu_0$, and we will do so.
For every point $x\in \supp \mu_0$, the same argument as in the proof of Lemma~\ref{l.closureofunstable}, shows that
$$
\frac{1}{n}\sum_{i=0}^{n-1}F^i_* \nu^u_x \text{ converges to } \mu_0.
$$
It follows that $\cup_{n \ge 0} F^{n}(\cF^u(x))$ is dense in $\supp \mu_0$ and, in particular, $\cF^u(p)$ is dense in $\supp \mu_0$.
Keeping in mind that $\cF^u$ is a continuous foliation, it follows that there is $n_x\ge 1$ such that $F^{n_x}(\cF^u(x))$ intersects
$W^s(p)$ transversely at some point. Taking backward iterates, we get that $\cF^u(x)$ also intersects $W^s(p)$ transversely.
By continuity of the unstable foliation, it follows that there exist $R_x>0$ and a neighborhood $U_x$ of $x$ such $\cF^u(y)$
intersects $W^s_{R_x}(p)$ (the $R_x$-neighborhood of $p$ inside the stable manifold) transversely at some point, for every $y\in U_x$.
Let $\{U_{x_1},\cdots, U_{x_j}\}$ be a finite cover of $\supp \mu_0$ by such neighborhoods and $R=\max\{R_{x_1},\dots, R_{x_j}\}$.
Then $\cF^u(y)$ intersects $W^s_{R}(p)$ transversely at some point, for every $y\in \supp \mu_0$. This also implies that for
each $n\ge 1$ there exists $a_n\in\cF^u(F^{-n}(y)) \cap W^s_{R}(p)$ a point of transverse intersection.
Since $F^n(a_n)\to p$, we conclude that $p\in \Cl(\cF^u(y))$, which implies that $\cF^u(p)\subset \Cl(\cF^u(y))$.
Since we have already shown that $\cF^u(p)$ is dense in $\supp \mu_0$, this finishes the proof.

\section{Proof of Theorem~\ref{main.skeleton}}

This is similar to the proof of \cite[Theorem~A]{DVY}, which deals with physical measures of diffeomorphisms with mostly contracting center.
The case when $f$ is of rotation type is covered by Proposition~\ref{p.finiteness}.
So, we only need to consider the case when $f$ has some hyperbolic periodic point, and all the ergodic maximal measures of $f$
have non-vanishing center exponent. Let us focus on the ergodic maximal measures with negative center exponent, corresponding
to part (a) of the theorem. Part (b) is entirely analogous.
Recall that $\MM^-(f)$ denotes the simplex generated by the finitely many ergodic maximal measures with negative center exponent.

\begin{lemma}\label{l.maximalmeasureandgibbsustates}
$\MM^-(f)=\Gibb^u_\nu(f)$.
\end{lemma}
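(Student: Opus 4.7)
The plan is to prove the two inclusions separately, using Propositions~\ref{p.AVuinvariant} and~\ref{p.iff} for the easy direction and combining them with Proposition~\ref{p.rigidityofsuinvariant} and an $f \leftrightarrow f^{-1}$ duality for the reverse inclusion.

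For $\MM^-(f) \subseteq \Gibb^u_\nu(f)$, I would start with any ergodic maximal measure $\mu$ of negative center exponent. Since its center exponent is in particular non-positive, Proposition~\ref{p.AVuinvariant} makes it $u$-invariant, and Proposition~\ref{p.iff} then promotes it to a $\nu$-Gibbs $u$-state. The defining condition $\mu^u_x=\nu^u_x$ for $\mu$-a.e.\ $x$ is stable under convex combinations, so $\Gibb^u_\nu(f)$ is convex and contains the entire simplex $\MM^-(f)$ generated by these ergodic measures.

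For the reverse inclusion, I take $\mu\in\Gibb^u_\nu(f)$. Lemma~\ref{l.MM} gives $\mu\in\MM(f)$, and Proposition~\ref{p.Gibbsustates}(b) gives that $\mu$-almost every ergodic component $\tilde\mu$ is again a $\nu$-Gibbs $u$-state. Since the metric entropy function is affine, each such $\tilde\mu$ is itself an ergodic maximal measure; and since we are outside the rotation-type case, Proposition~\ref{p.finiteness}(b) ensures that $\tilde\mu$ has non-vanishing center exponent. The remaining task is to rule out a positive center exponent, and this is the crucial step. I would handle it by duality: Proposition~\ref{p.iff} says $\tilde\mu$ is $u$-invariant, and applying Proposition~\ref{p.AVuinvariant} to $f^{-1}\in\SPH1$ — whose unstable foliation is $\cF^s$, whose center foliation and base space coincide with those of $f$, whose base maximal measure is still $\nu$, and whose center exponent is the negative of that of $f$ — yields that any ergodic measure with non-negative center $f$-exponent is $s$-invariant. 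A positive center exponent for $\tilde\mu$ would thus make it simultaneously $u$- and $s$-invariant, forcing $f$ to be of rotation type by Proposition~\ref{p.rigidityofsuinvariant}, a contradiction. Hence $\tilde\mu\in\MM^-(f)$, and since $\MM^-(f)$ is a simplex, $\mu\in\MM^-(f)$ as well.

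The main obstacle I expect is the bookkeeping in the duality step: one must verify that $f^{-1}$ inherits the full $\SPH1$ structure (routine), that the quotient map $\pi_c$, the reference measure $\nu$ on the base and the relevant disintegrations coincide for $f$ and $f^{-1}$, and most importantly that ``$u$-invariance for $f^{-1}$'' is literally the same condition as ``$s$-invariance for $f$'' in the sense of the center-leaf disintegrations required by Proposition~\ref{p.rigidityofsuinvariant}. Once this symmetry is checked, both inclusions follow without further analysis from results already compiled in the Preliminaries.
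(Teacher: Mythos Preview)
Your proposal is correct and follows essentially the same route as the paper: both directions use Propositions~\ref{p.iff} and~\ref{p.AVuinvariant}, and the key step ruling out non-negative center exponent for an ergodic $\nu$-Gibbs $u$-state is handled exactly by showing such a measure would be both $u$- and $s$-invariant and invoking Proposition~\ref{p.rigidityofsuinvariant}. The only difference is cosmetic: the paper states directly that ``by Proposition~\ref{p.AVuinvariant}, the assumption on the center exponent ensures that $\mu$ is $s$-invariant,'' leaving the $f\leftrightarrow f^{-1}$ duality implicit, whereas you spell it out (and correctly flag the bookkeeping required); similarly, the paper passes to ergodic $\nu$-Gibbs $u$-states without explicitly citing Proposition~\ref{p.Gibbsustates}(b), while you do.
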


\begin{proof}
Combining Propositions~\ref{p.iff} and~\ref{p.AVuinvariant}, we get that every ergodic maximal measure with negative center
exponent is a $\nu$-Gibbs $u$-state, that is, $\MM^-(f)\subset \Gibb^u_\nu(f)$.

Suppose that there is some ergodic $\nu$-Gibbs $u$-state $\mu$ with non-negative center exponent.
On the one hand, by Proposition~\ref{p.iff}, $\mu$ is $u$-invariant since it is a $\nu$-Gibbs $u$-state.
On the other hand, by Proposition~\ref{p.AVuinvariant}, the assumption on the center exponent ensures that $\mu$ is $s$-invariant.
Hence, $\mu$ is both $s$- and $u$-invariant.
By Proposition~\ref{p.rigidityofsuinvariant}, that implies that $f$ is of rotation type, which contradicts the assumptions.
This contradiction proves that every ergodic $\nu$-Gibbs $u$-state has negative center exponent.
Using Proposition~\ref{p.Gibbsustates}(a), it follows that $\Gibb^u_\nu(f)\subset\MM^-(f)$.
\end{proof}

Combining Lemma~\ref{l.maximalmeasureandgibbsustates} with Proposition~\ref{p.criterionofmostlycontracting} we immediately get
\begin{corollary}\label{c.mostlycontracting}
$f$ has $\nu$-mostly contracting center.
\end{corollary}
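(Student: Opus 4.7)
The plan is to feed the equality in Lemma~\ref{l.maximalmeasureandgibbsustates} directly into the criterion provided by Proposition~\ref{p.criterionofmostlycontracting}. The criterion tells us that $f$ has $\nu$-mostly contracting center precisely when every ergodic element of $\Gibb^u_\nu(f)$ has negative center Lyapunov exponent, so the corollary reduces to verifying exactly this property for $f$.

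To verify it, I take an arbitrary ergodic $\mu\in\Gibb^u_\nu(f)$ and argue that $\mu\in\MM^-(f)$ forces $\lambda^c(\mu)<0$. By Lemma~\ref{l.maximalmeasureandgibbsustates}, $\Gibb^u_\nu(f)=\MM^-(f)$. Recall that $\MM^-(f)$ is, by construction, the convex hull (simplex) generated by the finitely many ergodic maximal measures with strictly negative center exponent. Thus every extreme point of $\MM^-(f)$ is such an ergodic maximal measure. Since $\mu$ is ergodic, it is an extreme point of the compact convex set $\MM^-(f)=\Gibb^u_\nu(f)$, and therefore $\mu$ itself is one of those ergodic maximal measures, giving $\lambda^c(\mu)<0$.

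Applying Proposition~\ref{p.criterionofmostlycontracting} then concludes that $f$ has $\nu$-mostly contracting center. There is no real obstacle here: the content has already been packed into the lemma (which in turn relied on the rigidity result Proposition~\ref{p.rigidityofsuinvariant} to rule out non-negative center exponents in $\Gibb^u_\nu(f)$) and into the criterion; the corollary is just the recombination of these two facts. The only point to be careful about is that ``ergodic $\nu$-Gibbs $u$-state'' really does coincide with ``ergodic extreme point of the simplex $\MM^-(f)$'', which is immediate from the equality of the two sets together with the standard fact that the ergodic invariant measures are exactly the extreme points of the set of invariant measures.
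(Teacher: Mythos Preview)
Your proposal is correct and is essentially the same as the paper's own proof, which simply says the corollary follows by combining Lemma~\ref{l.maximalmeasureandgibbsustates} with Proposition~\ref{p.criterionofmostlycontracting}. You have merely unpacked the combination: the proof of the lemma already shows that every ergodic $\nu$-Gibbs $u$-state has negative center exponent (equivalently, as you note, ergodic elements of $\Gibb^u_\nu(f)=\MM^-(f)$ are extreme points, hence among the defining ergodic measures with $\lambda^c<0$), and this is exactly the hypothesis of the criterion.
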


Thus, using Proposition~\ref{p.isolated} we get that $f$ has finitely many ergodic $\nu$-Gibbs $u$-states.
Their supports are $u$-saturated, pairwise disjoint, and each one is a union of finitely many minimal components of the unstable foliation.
Denote by $\mu^-_1, \dots, \mu^-_k$ the ergodic maximal measures with negative center exponent.
For every $1\leq i \leq k$, let $p_i\in\supp\mu_i^-$ be a hyperbolic periodic point whose stable manifold has dimension equal to $\dim E^{cs}$
(see Lemmas \ref{l.existenceperiodicpoint} and \ref{l.closureofunstable}).

\begin{lemma}\label{l.showskeleton}
$\{p_1,\dots, p_k\}$ is a skeleton for $f$.
\end{lemma}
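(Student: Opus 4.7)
The plan is to verify each of the three conditions (a), (b), (c) in the definition of a skeleton. Condition (a) is immediate from the construction, since each $p_i$ was chosen as a hyperbolic periodic point whose stable manifold has dimension $\dim E^{cs}$, so the real work is in (b) and (c).

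For condition (c), I would argue by contradiction. The strong unstable manifold $W^u(p_i)$ is contained in the leaf $\cF^u(p_i)$ of the unstable foliation, so Lemma~\ref{l.closureofunstable} gives $W^u(p_i)\subset\Cl(\cF^u(\Orb(p_i)))=\supp\mu_i^-$. If some $y\in W^u(p_i)\cap W^s(\Orb(p_j))$ existed with $i\neq j$, then $f^n(y)\to\Orb(p_j)$; closedness and $f$-invariance of $\supp\mu_i^-$ would force $p_j\in\supp\mu_i^-$, which together with $p_j\in\supp\mu_j^-$ contradicts the pairwise disjointness of supports from Lemma~\ref{l.disjoint}.

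Condition (b) is the main step. Given $x\in M$, take a weak$^*$ subsequential limit $\hat\mu$ of $\frac{1}{n}\sum_{j=0}^{n-1}(f^j)_*\nu^u_x$; by Proposition~\ref{p.nupositivesetinleaf} this $\hat\mu$ is a $\nu$-Gibbs $u$-state. Proposition~\ref{p.Gibbsustates}(b) together with Lemma~\ref{l.maximalmeasureandgibbsustates} then express $\hat\mu$ as a convex combination of the $\mu_i^-$, so $\hat\mu$ gives positive mass to $\supp\mu_i^-$ for at least one index $i$. The last paragraph of the proof of Proposition~\ref{p.isolated} shows that for every $z\in\supp\mu_i^-$ the leaf $\cF^u(z)$ meets $W^s_R(p_i)$ transversely for a uniform $R>0$; by continuity of $\cF^u$ this extends to an open neighborhood $V$ of $\supp\mu_i^-$ on which every unstable leaf intersects $W^s(\Orb(p_i))$ transversely. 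Since $V$ is open and $\hat\mu(V)>0$, weak$^*$ convergence yields some $n\geq 0$ with $(f^n)_*\nu^u_x(V)>0$, hence some $y\in\xi(x)\subset\cF^u(x)$ with $f^n(y)\in V$. Then $\cF^u(f^n(y))=f^n(\cF^u(x))$ transversely intersects $W^s(\Orb(p_i))$, and applying $f^{-n}$ transports this intersection back to $\cF^u(x)$ using $f$-invariance of the unstable foliation and of $W^s(\Orb(p_i))$.

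The main obstacle I anticipate is the final step of (b): passing from the presence of $\mu_i^-$ as an ergodic component of $\hat\mu$ to an actual transverse intersection between $\cF^u(x)$ and $W^s(\Orb(p_i))$. The key ingredients are the transverse-intersection geometry at the end of the proof of Proposition~\ref{p.isolated}, its extension by continuity to an open neighborhood $V$, and the weak$^*$ convergence of the Cesàro averages, which delivers the iterate $n$ carrying a positive $\nu^u_x$-mass subset of $\xi(x)$ into $V$. Once this is assembled, conditions (a) and (c) are essentially bookkeeping.
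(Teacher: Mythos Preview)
Your proposal is correct and follows essentially the same strategy as the paper: for (b) both arguments take a Ces\`aro limit, invoke Proposition~\ref{p.nupositivesetinleaf} and Lemma~\ref{l.maximalmeasureandgibbsustates} to write it as a convex combination of the $\mu_i^-$, and then use positive mass near some $\supp\mu_j^-$ (the paper uses a small ball $B_r(p_j)$ rather than your neighborhood $V$, but the mechanism is identical) to produce a transverse intersection that is pulled back by $f^{-n}$. For (c) the paper cites the $\lambda$-lemma where you argue directly from closedness and $f$-invariance of $\supp\mu_i^-$; your route is if anything slightly more elementary, but the two are interchangeable.
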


\begin{proof}
By Proposition~\ref{p.isolated}, the support of each $\mu_i$ coincides with the closure of $\Orb(p_i)$.
Since the supports are pairwise disjoint, it follows from the inclination lemma (Palis' $\lambda$-lemma)
that $W^s(\Orb(p_i))\cap W^u(\Orb(p_j))=\emptyset$ for any $1\leq i \neq j \leq k$.
To complete the proof it remains to show that the unstable leaf through any point $x\in M$ intersects
$W^s(\Orb(p_i))$ for some $i$. Let $\mu$ be any accumulation point of the sequence
$$
\frac{1}{n}\sum_{i=1}^n (f^i)_*(\nu^u_{x}).
$$
By Proposition~\ref{p.nupositivesetinleaf}, $\mu$ is a $\nu$-Gibbs $u$-state.
By Proposition~\ref{p.Gibbsustates}(b), $\mu$ is a convex combination $a_1\mu_1^- +  \cdots + a_k \mu_k^-$.
Fix any $j$ such that $a_j>0$. Then there exist $n$ arbitrarily large and $r>0$ such that
$$
f^n_*(\nu^u_x)(B_r(p_j))>0.
$$
This implies that $f^n(\cF^u_{loc}(x))$ intersects $B_r(p_j)$, which on its turn implies that
$f^n(\cF^u_{loc}(x))$ has some intersection with $W^s(\Orb(p_j))$. The intersection is transverse,
since the dimensions are complementary.
By taking backward iterates, we get that $\cF^u_{loc}(x)$ has some transverse intersection with $W^s(\Orb(p_j))$.
\end{proof}

The proof of Theorem~\ref{main.skeleton} is complete.

\section{Proof of Theorem~\ref{main.robust}}

\subsection{Rotation type case}

If $f\in\SPH1$ has no hyperbolic periodic points then, by Theorem~\ref{main.skeleton}(a), it has a unique ergodic maximal measure $\mu$,
and it has vanishing center exponent.
Moreover, by Proposition~\ref{p.finiteness}(a), the support $\supp\mu$ is the whole ambient manifold.
By Proposition~\ref{p.Gibbsustates}(a), there is a unique $\nu$-Gibbs $u$-state and it coincides with $\mu$.
Thus, by Proposition~\ref{p.nupositivesetinleaf}, for any $x\in M$,
\begin{equation}\label{eq.denseusegment}
\lim_{n\to \infty}\frac{1}{n}\sum f^i_*(\nu^u_x)=\mu
\end{equation}
and, in particular, $\cup_{n\geq 0} f^n (\cF_1^u(x))$ is dense in the ambient manifold $M$.
Recall, from \cite{D} that accessibility is a $C^1$ open property.

\begin{proposition}\label{p.uniqueminimalcomponent}
Every $g$ in a $C^1$-neighborhood of $f$ has a unique $u$-saturated compact invariant subset.
\end{proposition}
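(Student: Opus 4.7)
The plan is to prove the proposition in two stages: first for $f$ itself, and then extend to a $C^1$-neighborhood of $f$ by means of a uniform version of the density property~\eqref{eq.denseusegment}. For $f$ itself, let $K\subset M$ be any non-empty $u$-saturated compact $f$-invariant subset and pick $x\in K$. Saturation gives $\cF^u_\loc(x)\subset K$, and by $f$-invariance every forward push-forward $f^j_*\nu^u_x$ is supported on $K$. By~\eqref{eq.denseusegment} the Cesaro averages $\frac{1}{n}\sum_{j=0}^{n-1} f^j_*\nu^u_x$ converge weak-$*$ to $\mu$, hence $\supp\mu\subset K$. Since $\supp\mu=M$ by Remark~\ref{r.complement}, one concludes $K=M$.

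For the perturbation step I would establish the following uniform claim: for every non-empty open $U\subset M$ there exist $N\in\NN$ and a $C^1$-neighborhood $\cU_U$ of $f$ such that, for every $g\in\cU_U$ and every $x\in M$, the set $\bigcup_{0\leq j\leq N} g^j(\cF^u_{g,\loc}(x))$ intersects $U$, where $\cF^u_g$ denotes the unstable foliation of $g$ and $\cF^u_{g,\loc}(x)$ a local leaf of $g$ through $x$. Granting this, the proposition follows by contradiction: if $K\subsetneq M$ is $u^g$-saturated compact $g$-invariant, then $M\setminus K$ is non-empty and open, so it contains an open ball $B$; covering $M$ by finitely many such balls and intersecting the corresponding $\cU_U$'s, one obtains a common $C^1$-neighborhood $\cU$ of $f$. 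For $g\in\cU$ and $x\in K$, saturation and $g$-invariance force $\bigcup_j g^j(\cF^u_{g,\loc}(x))\subset K$, contradicting the claim applied to a ball of the cover contained in $M\setminus K$.

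The uniform claim itself would be established by combining: (i) $\mu(U)>0$ from $\supp\mu=M$; (ii) the continuous dependence of the reference family $(g,x)\mapsto \nu^{g,u}_x$, following from Lemma~\ref{l.csjac_Markov_finiteness}, the canonical choice of disintegration, and continuous dependence of Markov partitions and invariant foliations on $g$ in the $C^1$ topology; and (iii) the upper semi-continuity of $\Gibb^u_\nu$ from Proposition~\ref{p.Gibbsustates}(c) combined with $\Gibb^u_\nu(f)=\{\mu\}$, which via Proposition~\ref{p.nupositivesetinleaf} forces every weak-$*$ accumulation point of the Cesaro averages $\frac{1}{n}\sum_j g^j_*\nu^{g,u}_x$ (as $g\to f$ in $C^1$ and $x$ varies in $M$) to equal $\mu$. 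A standard compactness argument then upgrades this pointwise convergence to a uniform lower bound of the form $\frac{1}{N}\sum_{j=0}^{N-1} g^j_*\nu^{g,u}_x(U)>\mu(U)/2>0$, which yields the claim. The main obstacle will be this uniformity step: although $\Gibb^u_\nu(f)$ is a single point, at nearby $g$ the set $\Gibb^u_\nu(g)$ could in principle be larger, and a subsidiary subtlety is that the a priori dependence of $\cU_U$ on $U$ must be handled by adapting the finite cover of $M$ to the (uniform) scale of potential holes in proper $u^g$-saturated invariant subsets.
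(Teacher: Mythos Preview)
Your argument for $f$ itself is correct and clean: any $u$-saturated compact $f$-invariant set $K$ supports every Ces\`aro average of $\nu^u_x$ for $x\in K$, hence contains $\supp\mu=M$.

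The perturbation step, however, has a genuine gap that you yourself anticipate, and it is not merely technical. Your uniform claim for a \emph{fixed} open $U$ is true, but to deduce that every $u$-saturated compact $g$-invariant set equals $M$ you would need it simultaneously for \emph{arbitrarily small} $U$, with a single neighborhood $\cU$ of $f$; this fails because the required $N$ grows without bound as $U$ shrinks. A fixed finite cover only shows that any such $K$ meets every element of the cover, i.e.\ is $\epsilon$-dense for a fixed $\epsilon$; it does not rule out proper $K$ whose complement is smaller than the mesh of the cover. The appeal to upper semi-continuity of $\Gibb^u_\nu$ does not rescue this: knowing that all $\nu$-Gibbs $u$-states of $g$ are weak$^*$-close to $\mu$ says nothing about whether their supports coincide or fill $M$. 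In fact, for $g$ close to $f$ possessing a hyperbolic periodic point, Theorem~\ref{main.skeleton} identifies the relevant minimal $u$-saturated compact invariant set as $\Cl(\cF^u_g(\Orb(p)))$, which has no reason to be all of $M$; so the conclusion $K=M$ you are aiming for is stronger than what holds, and what the application actually needs is uniqueness of the \emph{minimal} such set.

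The paper circumvents this uniformity trap by bringing in the stable foliation through accessibility. It first proves (Lemma~\ref{l.topologicaltransverse} and Corollary~\ref{c.robusttwist}) that accessibility of $f$ yields two fixed open sets $U_1,U_2$ and a $C^1$-neighborhood $\cU$ with a robust crossing property: for every $g\in\cU$ and every $x_i\in U_i$, a single stable leaf of $g$ meets both $\cF^u_{g,1}(x_1)$ and $\cF^u_{g,1}(x_2)$. Then your density argument, applied only to these two \emph{fixed} targets, gives a uniform $n_0$ (persisting under perturbation) such that forward $g$-iterates of any unstable leaf hit both $U_1$ and $U_2$. Combining, any two unstable leaves of $g$ can be joined via a common stable leaf after bounded iteration, and a criterion from~\cite{HU} (Lemma~\ref{l.uniqueminimal}) yields the uniqueness. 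The essential ingredient your proposal lacks is exactly this use of the stable holonomy to link distinct unstable leaves: purely unstable density is not robust enough to force uniqueness on its own.
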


\begin{proof}
We need the following criterium that we borrow from \cite{HU}:

\begin{lemma}\label{l.uniqueminimal}
Let $h$ be a partially hyperbolic diffeomorphism. Suppose that for any two unstable leaves
$\cF^u(x_1)$ and $\cF^u(x_2)$ there are $n_1, n_2>0$ and a stable leaf $\cF^s(y)$ such that
$\cF^s(y) \cap h^{n_i}(\cF^u(x_i))\neq \phi$ for $i=1,2$.
Then, $h$ has a unique compact, invariant and $u$-saturated subset.
\end{lemma}

We will use the following terminology. Let $D_1$ be the graph of a continuous map $\phi_1: I^d\to I$,
where $I=[0,1]$. Given a point $x=(x_1,x_2)\in I^d\times I$, we say that
$x$ is \emph{above} $D_1$ if $x_2>\phi_1(x_1)$ and we say that 
$x$ is \emph{below} $D_1$ if $x_2>\phi_1(x_1)$.
Let $D_2$ be the image of a continuous injective map $\phi_2: I^d\to I^d\times I$,
we say $D_2$  \emph{crosses} $D_1$ if there are $y,z\in D_2$ such that $y$ is above $D_1$ and $z$ is below $D_1$.
It is easy to see that if $D_2$ crosses $D_1$ then $D_1\cap D_2\neq \emptyset$.

The next lemma is related to results in \cite{AV,D}:

\begin{lemma}\label{l.topologicaltransverse}
Let $f$ be a dynamically coherent accessible partially hyperbolic diffeomorphism with 1-dimensional center.
Then there are points $x_1\in M$ and $x_2\in\cF^s_1(x_1)$, where $\cF^s_1(x_1)$ denotes the ball contained in the leaf $\cF^s(x_1)$ with center $x_1$ and radius $1$,
such that the holonomy map $\cH^s_{x_2,x_1}: \cF^{cu}_{loc}(x_2)\to \cF^{cu}_{loc}(x_1)$ induced by the stable foliation satisfies
$$
\cH^s_{x_2,x_1}(\cF^u_1(x_2)) \text{ crosses }\cF^u_1(x_1).
$$
\end{lemma}

\begin{proof}
Suppose there are $x_1\in M$ and $x_2\in\cF^s_1(x_1)$ such that $\cH^s_{x_2,x_1}(\cF^u_1(x_2))$ is not contained in $\cF^u_1(x_1)$.
Then we may assume that there is $y\in \cH^s_{x_2,x_1}(\cF^u_1(x_2))$ which is below $\cF^u_1(x_1)$.
Take $x_3\in \cF^{cu}_{loc}(x_1)$ very close to $\cF^u_1(x_1)$ and still below $\cF^u_1(x_1)$. Then $x_1=\cH^s_{x_2,x_1}(x_2)$ is above $\cF^u_1(x_3)$.
On the other hand, since $\cF^u$ is continuous, $y$ is below $\cF^u_1(x_3)$.
Thus $\cH^s_{x_2,x_1}(\cF^u_1(x_2))$ crosses $\cF^u_1(x_3)$, as claimed in the lemma.

We are left to show that points $x_1$ and $x_2$ as in the previous paragraph do exist. Suppose otherwise.
Then for every $x\in M$ the union $\cup_{y\in \cF^u(x)}\cF^s(y)$ is a topological codimension-one submanifold $\cF^{su}(x)$ which is sub-foliated by $\cF^u$ and $\cF^s$.
The family $\cF^{su}(x)$, $x\in M$ is a topological foliation for which every leaf $\cF^{su}$ is an accessible class.
But it is clear that $\cF^{su}(x) \neq M$, and so this contradicts the assumption that $f$ is accessible.
This contradiction completes the proof.
\end{proof}

Since $\cF^u$ and $\cF^s$ vary continuously with the diffeomorphisms, crossing is a robust property.
Thus, Lemma~\ref{l.topologicaltransverse} has the following immediate consequence:

\begin{corollary}\label{c.robusttwist}
Let $f$ be an accessible partially hyperbolic diffeomorphism with 1-dimensional center.
Then there are a $C^1$-neighborhood $\cU$ of $f$, and two disjoint open sets $U_1, U_2$ such that, for any $g\in \cU$, and any pair of
points $x_1\in U_1$, $x_2\in U_2$, there is a stable leaf $\cF^s(y)$ of $g$ such that $\cF^s(y)$ intersects both $\cF^u_1(x_1)$ and $\cF^u_1(x_2)$.
\end{corollary}

We are ready to complete the proof of Proposition~\ref{p.uniqueminimalcomponent}.
Take a neighborhood $\cU$ of $f$ and open sets $U_1$ and $U_2$ as in Corollary~\ref{c.robusttwist}.
As already stated at the beginning of this section,  Proposition~\ref{p.nupositivesetinleaf} implies that  $\cup_{i>0}f^i(\cF^u_1(x))$ is a dense subset of $M$ for any $x\in M$.
Hence for each $x\in M$ and $i=1, 2$ there is $n_{x,i}\leq n_0$ such that $f^{n_{x,i}}(\cF^u_1(x))$ intersects $U_i$ for $i=1,2$.
Up to shrinking $\cU$ if necessary, this remains true for any $g\in\cU$.
Now it suffices to use the criterium in Lemma~\ref{l.uniqueminimal}.
\end{proof}

Let us proceed with the proof of Theorem~\ref{main.robust} in the rotation type case. Let $g$ denote a $C^2$ diffeomorphism $C^1$-close to $f$.
By Theorem~\ref{main.skeleton}, either $g$ has no hyperbolic periodic point, in which case it has a unique maximal measure;
or the ergodic maximal measures of $g$ have non-vanishing center exponents, and their supports are pairwise disjoint $u$-saturated invariant compact sets.
By Proposition~\ref{p.uniqueminimalcomponent}, there is at most one of such invariant set.
Hence, $g$ has a unique ergodic maximal measure $\mu_g^-$ with negative center exponent.
A similar argument shows that $g$ admits a unique ergodic maximal measure $\mu_g^+$ with positive center exponent.

Denote by $\Gamma^+$ (respectively, $\Gamma^-$) the map assigning to $g\in \cU\cap\Diff^2(M)$ its ergodic maximal measure with non-negative (respectively, non-positive) center exponent.

\begin{lemma}\label{l.uniquegibbs}
For $g\in \SPH1$ sufficiently close to $f$ in the $C^1$ topology, $\Gamma^-(g)$ is the unique $\nu$-Gibbs $u$-state of $g$.
\end{lemma}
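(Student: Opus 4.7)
The plan is to show that any $\nu$-Gibbs $u$-state for such a $g$ must be supported on the unique $u$-saturated compact invariant set provided by Proposition~\ref{p.uniqueminimalcomponent}, and then use the disjointness of supports of distinct ergodic $\nu$-Gibbs $u$-states to force uniqueness.

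First I would split into two cases, depending on whether $g$ has a hyperbolic periodic point. If $g$ has none, then by Theorem~\ref{main.skeleton}(a) the map $g$ is of rotation type, so by Proposition~\ref{p.finiteness} it admits a unique maximal measure $\mu_g$; since $\Gibb^u_\nu(g)\subset\MM(g)=\{\mu_g\}$ by Lemma~\ref{l.MM}, and $\Gibb^u_\nu(g)$ is non-empty by Proposition~\ref{p.Gibbsustates}(a), we conclude that $\mu_g$ is the unique $\nu$-Gibbs $u$-state, and by definition it equals $\Gamma^-(g)$.

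In the remaining case, $g$ has some hyperbolic periodic point, so Lemma~\ref{l.maximalmeasureandgibbsustates} applied to $g$ gives $\Gibb^u_\nu(g)=\MM^-(g)$, and Corollary~\ref{c.mostlycontracting} gives that $g$ has $\nu$-mostly contracting center. Thus Proposition~\ref{p.isolated} applies to $g$: the ergodic $\nu$-Gibbs $u$-states are finite in number, and their supports are pairwise disjoint $u$-saturated compact invariant subsets of $M$. Now I invoke Proposition~\ref{p.uniqueminimalcomponent}, which ensures that after possibly shrinking the $C^1$-neighborhood of $f$, every such $g$ admits only one $u$-saturated compact invariant subset. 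Since distinct ergodic $\nu$-Gibbs $u$-states would have disjoint $u$-saturated supports, this forces $\Gibb^u_\nu(g)$ to contain exactly one ergodic element. As $\Gibb^u_\nu(g)=\MM^-(g)$ and $\Gamma^-(g)$ is by definition the unique ergodic element of $\MM^-(g)$, we conclude $\Gibb^u_\nu(g)=\{\Gamma^-(g)\}$.

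The only delicate point is to make sure the neighborhood $\cU$ on which Proposition~\ref{p.uniqueminimalcomponent} gives uniqueness of the $u$-saturated compact invariant subset is the same one on which Lemma~\ref{l.maximalmeasureandgibbsustates} and the machinery of $\nu$-mostly contracting center apply; but this is only a matter of intersecting finitely many $C^1$-open neighborhoods of $f$, so no genuine obstacle arises. The main conceptual step is the observation that disjointness of supports (from Lemma~\ref{l.disjoint} and Proposition~\ref{p.isolated}) combined with the uniqueness of the $u$-saturated invariant set collapses the simplex $\Gibb^u_\nu(g)$ to a single point.
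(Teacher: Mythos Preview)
Your proof is correct and follows essentially the same approach as the paper. The paper's proof is slightly more compressed: in the hyperbolic case it simply cites Lemma~\ref{l.maximalmeasureandgibbsustates} together with the observation---already established in the paragraph immediately preceding the lemma, via Theorem~\ref{main.skeleton} and Proposition~\ref{p.uniqueminimalcomponent}---that $g$ has a unique ergodic maximal measure with negative center exponent; you instead re-derive that uniqueness inside the proof by invoking Corollary~\ref{c.mostlycontracting} and Proposition~\ref{p.isolated} directly, but the underlying logic (disjoint $u$-saturated supports forced to coincide by Proposition~\ref{p.uniqueminimalcomponent}) is identical.
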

\begin{proof}
If $g$ has no hyperbolic periodic orbit then by Theorem~\ref{main.skeleton}(a), it admits a unique ergodic maximal measure and it coincides with $\Gamma^-(g)$.
By Proposition~\ref{p.Gibbsustates}(a), $\Gamma^-(g)$ is also the unique $\nu$-Gibbs $u$-state for $g$, so we get the conclusion in this case.
If $g$ does have have some hyperbolic periodic orbit then, from Lemma~\ref{l.maximalmeasureandgibbsustates} and the previous observation that $g$ has a unique
ergodic maximal measure $\Gamma^-(g)=\mu^-_g$ with negative center exponent, we conclude that $g$ has a unique $\nu$-Gibbs $u$-state, and it coincides with $\Gamma^-(g)$.
Thus we get the conclusion also in this case.
\end{proof}

Consider a sequence of $C^2$ diffeomorphisms $\{g_n\}_{n=0}^\infty\subset \cU$ converging to $g$ in the $C^1$ topology.
According to Proposition~\ref{p.Gibbsustates}(c), any accumulation point of $\nu$-Gibbs $u$-states of $g_n$ is a $\nu$-Gibbs $u$-state of $g$.
Since we have just shown that the latter is unique, it follows that $\Gamma^-(g_n)$ converges to $\Gamma^-(g)$ when $n\to\infty$.
This proves that the map $\Gamma^{-}$ is continuous. The argument for $\Gamma^+$ is entirely analogous.

This proves Theorem~\ref{main.robust} in the rotation type case.

\subsection{Hyperbolic case}

Now suppose that $f$ has some hyperbolic periodic orbit. Then the same is true for any diffeomorphism in some neighborhood $\cU$ of $f$.
By Corollary~\ref{c.mostlycontracting}, every $g\in \cU\cap\Diff^2(M)$ has $\nu$-mostly contracting center.
Suppose that $f$ has $k\ge 1$ ergodic maximal measures $\{\mu^-_1,\cdots, \mu^-_k\}$ with negative center exponents.
By Lemma~\ref{l.maximalmeasureandgibbsustates}, these are precisely the ergodic $\nu$-Gibbs $u$-states of $f$.
By Theorem~\ref{main.skeleton}, the diffeomorphism $f$ has some skeleton $\cS(f)=\{p_1,\cdots,p_k\}$ such that the dimension of the stable manifold of each $p_i$ is equal to $\dim E^{cs}$.
By parts (c) and (d) of Proposition~\ref{p.skeleton}, the continuation $\{p_1(g),\cdots,p_k(g)\}$ is a pre-skeleton of $g$ and contains some skeleton of $g$.
In fact, it is itself a skeleton of $g$ if and only if no heteroclinic intersection was created between $p_i(g)$ and $p_j(g)$ for $1\leq i\neq j \leq k$ after perturbation.
Obviously, the number of elements of this (or any other) skeleton of $g$ is at most $k=\#\cS(f)$.
Using Theorem~\ref{main.skeleton} once more, we get that the number of ergodic maximal measures with negative center exponent of any $C^2$ diffeomorphism $g\in\cU$ is smaller than or
equal to the number of ergodic maximal measures with negative center exponent of $f$.

Consider any sequence $(f_n)_n$ of $C^2$ diffeomorphisms converging to $f$ in the $C^1$ topology,
and suppose $\{p_1(f_n),\cdots,p_k(f_n)\}$ is a skeleton of $f_n$ for each $n$.
By Theorem~\ref{main.skeleton}, each $f_n$ has exactly $k$ ergodic maximal measures $\mu^{n,-}_i$, $i=1,\cdots,k$ with negative center exponent.
Moreover, their supports are pairwise disjoint and, up to renumbering, we may assume that $p_i(f_n)\in\supp\mu^{n,-}_i$ for every $i$.
We want to prove that 
\begin{equation}\label{eq.convergence_i}
\left(\mu^{n,-}_i\right)_n \text{ converges to } \mu_i \text{ for every } i.
\end{equation}
Up to reordering, it is no restriction to consider $i=1$. Also, up to restricting to a subsequence, we may suppose that $\left(\mu^{n,-}_{1}\right)_n$ converges to some $\tilde{\mu}_{1}$.
Now, parts (b) and (c) of Proposition~\ref{p.Gibbsustates} imply that $\tilde{\mu}_{1}\in \Gibb^u_\nu(f)$ and can be written as a convex combination
$\tilde{\mu}_{1}=a_1\mu^-_1+\cdots+a_k\mu^-_k$. To prove \eqref{eq.convergence_i}, we just have to check that $a_j=0$ for every $j>1$.

Now, the next lemma asserts that otherwise $\{p_1(f_n),\cdots, p_k(f_n)\}$ is not a skeleton of $f_n$, which would contradict the currents assumptions.
So, to finish all we need is

\begin{lemma}
If $a_{j}>0$ for some $j>1$ then $\cF^u(p_1(f_n))$ has a transverse intersection with $W^s(p_j(f_n))$ for every large enough $n$.
\end{lemma}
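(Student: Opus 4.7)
The plan is to exploit the assumption $a_j>0$ to produce, for large $n$, points of $\supp\mu^{n,-}_1$ that accumulate on $p_j$, and then to combine the uniform transversality of $W^s(p_j(f_n))$ to the unstable foliation with the $\cF^u_n$-minimality of the connected components of $\supp\mu^{n,-}_1$, in order to manufacture the desired transverse heteroclinic intersection.

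First I would observe that since $\tilde\mu_1=a_1\mu^-_1+\cdots+a_k\mu^-_k$ with $a_j>0$, one has $\supp\mu^-_j\subset\supp\tilde\mu_1$, and in particular $p_j\in\supp\tilde\mu_1$. Weak$^*$ convergence $\mu^{n,-}_1\to\tilde\mu_1$ implies that any fixed neighborhood of $p_j$ meets $\supp\mu^{n,-}_1$ for all large $n$, so one can choose $z_n\in\supp\mu^{n,-}_1$ with $z_n\to p_j$. Since also $p_j(f_n)\to p_j$, the points $z_n$ and $p_j(f_n)$ become arbitrarily close for $n$ large.

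Next, I would use Theorem~\ref{main.skeleton}(a) and Proposition~\ref{p.Gibbsustates}(d) to identify $\supp\mu^{n,-}_1$ with $\Cl(\cF^u_n(\Orb(p_1(f_n))))$ and note that it is $\cF^u_n$-saturated, so that the local unstable leaf $\cF^u_{n,\loc}(z_n)$ is contained in it. Since $W^s(p_j(f_n))$ is tangent to $E^{cs}$ at $p_j(f_n)$ (which is transverse to $E^u$) and, being the hyperbolic continuation of $W^s(p_j)$, has a uniform lower bound on its size, for $n$ large the leaf $\cF^u_{n,\loc}(z_n)$ crosses $W^s(p_j(f_n))$ transversely at a unique point $\tilde z_n$ close to $p_j$. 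Thus $\tilde z_n$ lies in $\Cl(\cF^u_n(\Orb(p_1(f_n))))\cap W^s(p_j(f_n))$, transversely.

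Finally, by Corollary~\ref{c.mostlycontracting} together with Proposition~\ref{p.isolated} applied to $f_n$, every $\cF^u_n$-leaf contained in $\supp\mu^{n,-}_1$ is dense in its connected component. The component $C$ containing $\tilde z_n$ meets $\Cl(\cF^u_n(f_n^m(p_1(f_n))))$ for some $m$, so by $\cF^u_n$-minimality the leaf $\cF^u_n(f_n^m(p_1(f_n)))$ is dense in $C$ and accumulates on $\tilde z_n$. Since transverse intersection is an open condition and the unstable foliation varies continuously, this leaf itself will cross $W^s(p_j(f_n))$ transversely at a point close to $\tilde z_n$; applying $f_n^{-m}$ then yields a transverse intersection of $\cF^u_n(p_1(f_n))$ with the leaf $W^s(f_n^{-m}(p_j(f_n)))\subset W^s(\Orb(p_j(f_n)))$, which contradicts skeleton condition (c) for $\{p_1(f_n),\dots,p_k(f_n)\}$. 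The main obstacle, as I see it, will be the bookkeeping in this last step: justifying that a closure-level transverse intersection at $\tilde z_n$ genuinely lifts to an intersection along a genuine orbit-leaf $\cF^u_n(f_n^m(p_1(f_n)))$, and then transferring the result by a finite iterate back to the unstable leaf of $p_1(f_n)$ itself and a leaf in the $f_n$-orbit of $W^s(p_j(f_n))$.
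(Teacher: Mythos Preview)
Your argument is correct and follows essentially the same line as the paper's: use $a_j>0$ and weak$^*$ convergence to place $\supp\mu^{n,-}_1$ near $p_j$, then use uniform transversality of $W^s(p_j(f_n))$ to the unstable foliation together with $\supp\mu^{n,-}_1=\Cl(\cF^u_n(\Orb(p_1(f_n))))$ to produce the heteroclinic intersection.

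The one place you overcomplicate things is your final step. You do not need $\cF^u_n$-minimality of the components (Proposition~\ref{p.isolated}) at all. The transversality condition ``$W^s(p_j(f_n))\pitchfork\cF^u_{n,\loc}(x)$'' holds for every $x$ in a fixed \emph{open} neighborhood $B$ of $p_j$, provided $n$ is large. Once you know $\supp\mu^{n,-}_1$ meets $B$ (which is exactly your choice of $z_n$), the trivial fact that a set is dense in its own closure already gives a point of $\cF^u_n(\Orb(p_1(f_n)))$ inside the open set $B$; its local unstable leaf then meets $W^s(p_j(f_n))$ transversely. One application of $f_n^{-m}$ turns this into $\cF^u_n(p_1(f_n))\cap W^s(\Orb(p_j(f_n)))\neq\emptyset$, contradicting skeleton condition~(c). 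This is exactly what the paper does, and it dissolves the ``main obstacle'' you flagged.
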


\begin{proof}
Choose $B$ a small neighborhood of $p_j$ such that $\mu^-_j(B)=b>0$ and $W^s(p_j)$ has a transverse intersection with $\cF^u_{loc}(x)$ for every $x\in B$.
Take $n$ large enough that $\mu^{n,-}_1(B)>{a_j b}/{2}$ and $p_j(f_n)$ is close enough to $p_j(f)$ that its stable manifold $W^s(p_j(f_n))$ has a transverse
intersection with $\cF^u_n(x)$ for any $x\in B$.
In particular, $\supp(\mu^{n,-}_1)$ intersects $B$. By Proposition~\ref{p.Gibbsustates}, $\supp(\mu^n_1)$ is $u$-saturated, and by Theorem~\ref{main.skeleton},
we conclude that $\cF^u(\Orb(p_1(f_n)))$ intersects $B$. Hence, $\cF^u(\Orb(p_1(f_n))) \cap W^s(p_j(f_n))\neq \emptyset$.
However, this contradicts the assumption that $\{p_1(f_n),\cdots, p_k(f_n)\}$ is a skeleton and, consequently, there are no heteroclinic intersections between
its periodic orbits.
\end{proof}

This finishes the proof of Theorem~\ref{main.robust}.

\section{Proofs of Theorems~\ref{main.Nil} and ~\ref{main.Nilcontinuation}}

Let $M$ be a 3-dimensional nilmanifold different from $T^3$.
Recall that (Proposition~\ref{p.nilmanifold}), every partially hyperbolic diffeomorphism $f:M\to M$ is in $\SPH1$.

First we deduce Theorem~\ref{main.Nil}.
By Proposition~\ref{p.nilmanifold} and Theorem~\ref{main.skeleton}, there are two cases:
\begin{itemize}
\item either $f$ has no any hyperbolic periodic point, in which case it has a unique maximal measure, and is transitive;
\item or $f$ has some hyperbolic periodic point, and then both $\Gibb^u_\nu(f)$ and $\Gibb^s_\nu(f)$ contain each a unique element,
and so $f$ has exactly two ergodic maximal measures.
\end{itemize}
(In the second case uniqueness follows from the fact that $u$-saturated and $s$-saturated invariant compact sets are unique.)

The proof of Theorem~\ref{main.Nil} is complete. Now we prove Theorem~\ref{main.Nilcontinuation}.

As before, define $\Gamma^+(f)$ (respectively, $\Gamma^-(f)$) to be the ergodic maximal measure with non-negative (respectively, non-positive)
center exponent. To show that these maps are continuous at $f$ relative to the $C^1$ topology, we consider the following two situations:
\begin{itemize}
\item $f$ has no hyperbolic periodic orbits: then continuity is a direct corollary of Theorem~\ref{main.robust}(a);
\item $f$ has a hyperbolic periodic orbit: then every $C^2$ diffeomorphism $g$ in a $C^1$-neighborhood of $f$ also has a hyperbolic periodic orbit;
by Theorem~\ref{main.Nil}, $g$ admits a unique maximal measure with negative (positive) center exponent;
in particular, the number of ergodic maximal measures with negative (positive) center exponent is constant;
by Theorem~\ref{main.robust}(b), the ergodic maximal measure with negative (positive) center exponent varies continuously with the diffeomorphisms in the $C^1$ topology.
\end{itemize}
The proof is finished.

\end{document}